\newtheorem{theorem}{Theorem}[section]
\newtheorem{lemma}[theorem]{Lemma}
\newtheorem{proposition}[theorem]{Proposition}
\newtheorem{question}[theorem]{Question}
\newtheorem{problem}[theorem]{Problem}
\theoremstyle{definition}
\newtheorem{example}[theorem]{Example}
\newtheorem*{acknowledgement}{Acknowledgment}
\theoremstyle{remark}
\newtheorem{remark}[theorem]{Remark}
\numberwithin{equation}{section}
\DeclareMathOperator{\Tor}{Tor}
\DeclareMathOperator{\Ker}{Ker}
\DeclareMathOperator{\Image}{Im}
\DeclareMathOperator{\height}{height}
\DeclareMathOperator{\reg}{reg}
\DeclareMathOperator{\lcm}{lcm}
\DeclareMathOperator{\ind-match}{ind-match}
\DeclareMathOperator{\match}{match}
\DeclareMathOperator{\min-match}{min-match}
\begin{document}
\title[Dominating induced matchings of finite graphs]
{Dominating induced matchings of finite graphs and regularity of edge ideals}
\author[T. Hibi]{Takayuki Hibi}
\address[Takayuki Hibi]{Department of Pure and Applied Mathematics,
Graduate School of Information Science and Technology,
Osaka University,
Toyonaka, Osaka 560-0043, Japan}
\email{hibi@math.sci.osaka-u.ac.jp}
\author[A. Higashitani]{Akihiro Higashitani}
\address[Akihiro Higashitani]{
Department of Mathematics, Kyoto Sangyo University, 
Motoyama, Kamigamo, Kita-ku, Kyoto 603-8555, Japan}
\email{ahigashi@cc.kyoto-su.ac.jp}
\author[K. Kimura]{Kyouko Kimura}
\address[Kyouko Kimura]{Department of Mathematics, Faculty of Science, 
Shizuoka University,
836 Ohya, Suruga-ku, Shizuoka 422-8529, Japan}
\email{kimura.kyoko.a@shizuoka.ac.jp}
\author[A. Tsuchiya]{Akiyoshi Tsuchiya}
\address[Akiyoshi Tsuchiya]{Department of Pure and Applied Mathematics,
Graduate School of Information Science and Technology,
Osaka University,
Toyonaka, Osaka 560-0043, Japan}
\email{a-tsuchiya@cr.math.sci.osaka-u.ac.jp}

\subjclass[2010]{Primary 05E40; Secondly 05C69, 05C70}
\keywords{edge ideal, dominating induced matching, regularity, unmixed graph, vertex decomposable graph}

\begin{abstract}
The regularity of an edge ideal of a finite simple graph $G$
is at least the induced matching number of $G$ and
is at most the minimum matching number of $G$.
If $G$ possesses a dominating induced matching, i.e., 
an induced matching which forms a maximal matching,
then the induced matching number of $G$ is equal to
the minimum matching number of $G$.
In the present paper, from viewpoints of both combinatorics
and commutative algebra, finite simple graphs with
dominating induced matchings will be mainly studied.
\end{abstract}
\maketitle

\section*{Introduction}
The regularity of an edge ideal of a finite simple graph has been studied
by many articles including \cite{BC1205}, \cite{BC1302}, \cite{DHS}, 
\cite{HaVThypergraph}, \cite{Katzman}, \cite{KAM}, 
\cite{Kummini}, \cite{MMCRTY}, 
\cite{Nevo}, 
\cite{VanTuyl}, \cite{Woodroofe-regularity} and  \cite{Zheng}. 
Recall that a finite graph is {\em simple} if 
it possesses no loop and no multiple edge.

Let $G$ be a finite simple graph on the vertex set
$[n] = \{ 1, \ldots, n \}$ with the edge set $E(G)$
and $S = K[x_{1}, \ldots, x_{n}]$ 
the polynomial ring in $n$ variables over a field $K$ with standard grading. 
The {\em edge ideal} of $G$ is the ideal $I(G) \subset S$ which is generated
by those squarefree quadratic monomials $x_{i}x_{j}$ with $\{ i, j \} \in E(G)$.
Following the previous paper \cite{HHKO}, we continue our research 
on the relation between the regularity $\reg(S/I(G))$ of
the quotient ring $S/I(G)$ and the matching number, the minimum matching number 
together with the induced matching number of $G$.

A {\em matching} of $G$ is a subset $\mathcal{M} \subset E(G)$ such that, 
for $e$ and $e'$ belonging to $\mathcal{M}$ with $e \neq e'$, one has $e \cap e'
= \emptyset$.  A maximal matching of $G$ is a matching $\mathcal{M}$ of $G$
for which $\mathcal{M} \cup \{ e \}$ cannot be a matching of $G$
for all $e \in E(G) \setminus \mathcal{M}$.  An {\em induced matching} is
a matching $\mathcal{M}$ of $G$ such that, for $e$ and $e'$ belonging to 
$\mathcal{M}$ with $e \neq e'$, there is no edge $f \in E(G)$ with
$e \cap f \neq \emptyset$ and $e' \cap f \neq \emptyset$.
The {\em matching number} of $G$, denoted by $\match(G)$, is 
the maximum cardinality of the matchings of $G$ and 
the {\em minimum matching number} of $G$, denoted by $\min-match(G)$,
is the minimum cardinality of the maximal matchings of $G$.
Furthermore, the {\em induced matching number} of $G$,
denoted by $\ind-match(G)$, is the maximum cardinality of
the induced matching of $G$.

The basic inequalities, due to \cite{Katzman} and 
\cite{Woodroofe-regularity}, among the above three invariants
together with $\reg(S/I(G))$ 
are 
\[
  \ind-match(G) \leq \reg S/I(G) \leq \min-match(G) \leq \match(G). 
\]
In addition, one can easily prove the inequality 
\[
\match(G) \leq 2 \min-match(G),
\] 
see Proposition \ref{claim:m<2m}. 
Naturally, one question arises: Given integers $p, c, q, r$ satisfying
\[
0 < p \leq c \leq q \leq r \leq 2q,
\]  
we can ask if there exists a finite simple graph $G$ for which
\[
\ind-match(G) = p, \, \,  
\reg S/I(G) = c, \, \,   
\min-match(G) = q, \, \,  
\match(G) = r.
\]
In Section $1$, this question and its related
problems will be studied.

Cameron and Walker \cite{CW} succeeded in characterizing a finite simple
graph $G$ with $\ind-match(G) = \match(G)$.  
For example, if $G$ is a star or a star triangle, then  
one has $\ind-match(G) = \match(G)$.
We say that a finite connected simple graph $G$ is a {\em Cameron--Walker graph}
if $\ind-match(G) = \match(G)$ and if $G$ is neither a star nor a star triangle.
Thus in particular for a Cameron--Walker graph $G$, one has
\[
  \ind-match(G) = \reg S/I(G) = \min-match(G) = \match(G). 
\] 
From a viewpoint of commutative algebra, the study on Cameron--Walker 
graphs is done in \cite{HHKO}.
In Section $2$, we treat some classes of finite simple graphs which contain 
Cameron--Walker graphs as a subclass and investigate these combinatorial 
properties. 

A {\em dominating induced matching} of $G$ is an induced matching
which also forms a maximal matching of $G$.
Every Cameron--Walker graph possesses a dominating induced matching.
Clearly a finite simple graph $G$ with
a dominating induced matching satisfies the equalities
\[
\ind-match(G) = \reg S/I(G) = \min-match(G).
\]
However, there is a finite simple graph $G$ which
possesses no dominating induced matching, but 
satisfies the equality $\ind-match(G) = \min-match(G)$. 
A characterization of finite simple graphs 
possessing dominating induced matchings
is easy, see Proposition \ref{ind-match=max-match}. 

Our first work is to find a characterization of finite simple graphs $G$ 
satisfying $\ind-match(G) = \min-match(G)$ 
(Theorem \ref{ind-match=min-match}). 

Recall that a {\em vertex cover} of a finite simple graph $G$ on $[n]$ 
is a subset $C \subset [n]$ for which $C \cap e \neq \emptyset$
for all $e \in E(G)$.  A {\em minimal vertex cover} of $G$ is a vertex
cover $C$ of $G$ for which no proper subset of $C$ 
can be a vertex cover of $G$. 
A finite simple graph $G$ is called {\em unmixed} if 
all minimal vertex covers have the same cardinality.
Our second work is to characterize unmixed graphs 
with dominating induced matchings (Theorem \ref{claim:unmixedDIM}).

Finally, in Section $3$, the algebraic study of finite simple graphs
with dominating induced matchings will be discussed.
In \cite{HHKO} it is shown that every Cameron--Walker graph is
vertex decomposable, hence sequentially Cohen--Macaulay.  However,
there is a finite simple graph $G$ with a dominating induced matching
such that $G$ is not sequentially Cohen--Macaulay.
We cannot escape from the temptation to find a characterization
of vertex decomposable graphs with dominating induced matchings.
However, to find a complete characterization seems to be rather difficult.
We try to find a class ${\mathcal A}$ of vertex decomposable graphs 
with dominating induced matchings such that ${\mathcal A}$ contains 
all Cameron--Walker graphs.
In addition, various examples will be supplied.

\section{Matching number, induced matching number, and regularity}
Let $G$ be a finite simple graph. 
A {\em matching} of $G$ is a subset $\mathcal{M} \subset E(G)$ such that 
$e \cap e' = \emptyset$ for all $e, e' \in \mathcal{M}$ with $e \neq e'$. 
We denote by $\match (G)$ (resp.\  $\min-match (G)$), 
the maximum (resp.\  minimum) cardinality among maximal matchings of $G$. 
Two edges $e, e' \in E(G)$ are said to be {\em $3$-disjoint} if 
$e \cap e' = \emptyset$ and there is no edge $f \in E(G)$ with 
$e \cap f \neq \emptyset$ and $e' \cap f \neq \emptyset$. 
An {\em induced matching} is a set of edges which are pairwise $3$-disjoint. 
We denote by $\ind-match (G)$, the maximum cardinality among 
induced matchings of $G$. 
By Katzman \cite{Katzman} and Woodroofe \cite{Woodroofe-regularity}, 
we have 
\begin{equation}
  \label{eq:all-ineq}
  \ind-match (G) \leq \reg S/I(G) \leq \min-match (G) \leq \match (G). 
\end{equation}
In this section, we investigate the problem to construct a finite simple 
connected graph with given values of these $4$ invariants. 

\par
We first note the relation between $\match (G)$ and $\min-match (G)$. 
\begin{proposition}
  \label{claim:m<2m}
  Let $G$ be a finite simple graph. 
  Then $\match (G) \leq 2 \min-match (G)$. 
\end{proposition}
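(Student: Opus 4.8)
The plan is to prove this by relating an arbitrary maximal matching to a maximum matching via a covering argument on the vertices. Let $\mathcal{M}$ be a maximal matching of $G$ achieving $\min-match(G)$, and let $\mathcal{M}^*$ be a maximum matching of $G$, so $|\mathcal{M}^*| = \match(G)$. The key observation is that, because $\mathcal{M}$ is maximal, every edge of $G$ must meet at least one vertex that is saturated by $\mathcal{M}$; in particular every edge $e^* \in \mathcal{M}^*$ contains at least one endpoint lying in $V(\mathcal{M})$, the set of vertices covered by $\mathcal{M}$. Since $|V(\mathcal{M})| = 2\min-match(G)$, this immediately gives a way to bound $|\mathcal{M}^*|$.

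First I would make the covering statement precise: define $V(\mathcal{M}) = \bigcup_{e \in \mathcal{M}} e$, so $|V(\mathcal{M})| = 2|\mathcal{M}| = 2\min-match(G)$. By maximality of $\mathcal{M}$, for any edge $f \in E(G)$ the set $\mathcal{M} \cup \{f\}$ fails to be a matching, which forces $f \cap V(\mathcal{M}) \neq \emptyset$; hence $V(\mathcal{M})$ is a vertex cover of $G$. Next I would set up the counting map from $\mathcal{M}^*$ into $V(\mathcal{M})$: for each edge $e^* \in \mathcal{M}^*$, choose one endpoint $\varphi(e^*) \in e^* \cap V(\mathcal{M})$, which is nonempty by the vertex cover property. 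Because the edges of the matching $\mathcal{M}^*$ are pairwise vertex-disjoint, the chosen endpoints are distinct, so $\varphi$ is injective.

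The conclusion then follows by comparing cardinalities: injectivity of $\varphi$ yields
\[
  \match(G) = |\mathcal{M}^*| = |\varphi(\mathcal{M}^*)| \leq |V(\mathcal{M})| = 2\min-match(G),
\]
which is exactly the desired inequality. I do not anticipate a serious obstacle here; the only point requiring a little care is the verification that the chosen endpoints are distinct, and this is precisely where the disjointness of the edges in the matching $\mathcal{M}^*$ is used, as opposed to any property of $\mathcal{M}$ beyond its maximality. In short, the whole argument rests on the two facts that a maximal matching covers a vertex cover of size $2\min-match(G)$ and that a maximum matching, being a set of disjoint edges, can place at most one endpoint per covered vertex.
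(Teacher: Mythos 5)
Your proof is correct and follows essentially the same route as the paper: the vertex set of a minimum maximal matching is a vertex cover of size $2\min-match(G)$, and since the edges of any matching are pairwise disjoint, no matching can have more than $2\min-match(G)$ edges. Your write-up merely makes explicit (via the injection $\varphi$) the counting step that the paper leaves implicit.
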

\begin{proof}
  Let $\{u_i, v_i \}$, $i = 1, 2, \ldots, q$ be edges of $G$ which 
  form a maximal matching with $q = \min-match (G)$. 
  Let $e$ be an edge in $G$. Then $e$ contains at least one vertex of 
  $2q$ vertices $u_1, v_1, \ldots, u_q, v_q$. 
  Therefore there is no matching which consists of $2q+1$ edges. 
\end{proof}

Then the following problem naturally occurs: 
\begin{problem}
  Let $p, c, q, r$ be integers satisfying 
  \begin{displaymath}
    0 < p \leq c \leq q \leq r \leq 2q. 
  \end{displaymath}
  Construct a finite simple connected graph $G$ satisfying 
  \begin{displaymath}
      \ind-match (G) = p, \quad 
      \reg S/I(G) = c, \quad
      \min-match (G) = q, \quad
      \match (G) = r. 
  \end{displaymath}
\end{problem}

When we ignore the condition for the regularity, we can do. 
The following result might be known, however we give a proof of it 
for the sake of completeness. 
\begin{theorem}
  \label{ind-min-match}
  For arbitrary integers $p, q, r$ with $0 < p \leq q \leq r \leq 2q$, 
  there exists a finite simple connected graph $G$ which satisfies 
  \begin{displaymath}
    \ind-match (G) = p, \qquad \min-match (G) = q, \qquad \match (G) = r. 
  \end{displaymath}
\end{theorem}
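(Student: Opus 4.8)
The plan is to build $G$ from $p$ elementary gadgets joined at one new vertex. For integers $t\ge 1$ and $0\le s\le t$, let $G(t,s)$ be the graph consisting of a clique (the ``core'') on $2t$ vertices together with one pendant leaf hung on $2s$ prescribed core vertices. I claim $\ind-match(G(t,s))=1$, $\min-match(G(t,s))=t$ and $\match(G(t,s))=t+s$. Indeed, every edge of $G(t,s)$ meets the core and the core is complete, so no two edges are $3$-disjoint, giving induced matching number $1$. Matching the $2s$ pendants to their neighbours and pairing up the remaining $2(t-s)$ core vertices is a perfect matching (the gadget has $2(t+s)$ vertices), so $\match=t+s$; and a perfect matching of the core clique alone is a maximal matching of size $t$, while in any maximal matching at most one core vertex can be left unmatched (two would span an addable clique edge), forcing at least $\lceil(2t-1)/2\rceil=t$ edges meeting the core, so $\min-match=t$.

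Given $p\le q\le r\le 2q$, I would first pick positive integers $t_1,\dots,t_p$ with $\sum_i t_i=q$ (possible as $p\le q$) and then integers $0\le s_i\le t_i$ with $\sum_i s_i=r-q$ (possible as $0\le r-q\le q=\sum_i t_i$, using $q\le r\le 2q$). Take the disjoint gadgets $G(t_i,s_i)$, add one new vertex $z$, and join $z$ by a single edge to one core vertex $w_i$ of each gadget; call the result $G$, which is connected. Since all three invariants are additive over connected components, the disjoint union already realises $\sum_i 1=p$, $\sum_i t_i=q$ and $\sum_i(t_i+s_i)=r$, so the whole task reduces to proving that attaching $z$ and the spokes $zw_i$ leaves the three numbers unchanged.

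The verifications then run as follows. For the induced matching number, choosing one edge from each gadget always gives a $3$-disjoint family, since the only inter-gadget edges are the spokes, which share $z$ and each meet a single gadget; hence $\ind-match(G)\ge p$. Conversely a spoke $zw_i$ is $3$-disjoint from no edge of its own gadget (if such an edge avoids $w_i$ it has a core endpoint $v\ne w_i$, and $w_iv$ joins them), so each spoke blocks its gadget, and an induced matching contains at most one edge per gadget plus at most one spoke, giving $\ind-match(G)\le p$. For the matching number, every gadget has a perfect matching, so no maximum matching omits $w_i$ and $z$ cannot augment; with the disjoint perfect matchings as a witness this yields $\match(G)=r$. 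Finally $\min-match(G)\le q$ because a perfect matching of each core clique is maximal in $G$. The step I expect to be the crux is $\min-match(G)\ge q$: I would take an arbitrary maximal matching $M$, partition its edges by the unique gadget core each one meets (a core--core edge meeting two core vertices of the same gadget, a pendant edge or a spoke meeting one), and use that at most one core vertex of each gadget is left unmatched to force at least $\lceil(2t_i-1)/2\rceil=t_i$ edges onto gadget $i$, so that $|M|\ge\sum_i t_i=q$. This establishes all three equalities.
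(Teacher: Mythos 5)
Your proof is correct, but it takes a genuinely different route from the paper's. The paper builds a \emph{single} graph $G_{a,b,m,n}$: one clique $K_{2n}$ carrying $2m$ pendant leaves, with $a$ small trees and $b$ short paths hanging from one distinguished clique vertex; it states (without detailed proof) that $\ind-match = a+b+1$, $\min-match = a+b+n$, $\match = 2a+b+n+m$, and then solves the resulting system under the side constraint $m \leq n$, which forces a two-case analysis according to whether $r-q \leq q-p+1$ or not. You instead distribute the work over $p$ gadgets $G(t_i,s_i)$ (a clique $K_{2t_i}$ with $2s_i$ pendant leaves), each contributing exactly $1$, $t_i$, $t_i+s_i$ to the three invariants, glued at one new vertex $z$; the parameter choice is then immediate, with no case analysis, since $p \leq q$ yields the $t_i$ and $0 \leq r-q \leq q$ yields the $s_i$. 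Both constructions share the same kernel (a clique inflates $\min-match$ past $\ind-match$; pendant leaves on the clique inflate $\match$ past $\min-match$), but they differ in how $\ind-match$ is driven up to $p$: the paper hangs trees off the clique, while you take $p$ parallel clique gadgets, and your verifications of the key points are sound --- in particular, that a spoke $zw_i$ is never $3$-disjoint from an edge of its own gadget (so $\ind-match(G) \leq p$), and that a maximal matching leaves at most one core vertex of each clique uncovered while every edge of $G$ meets the core of exactly one gadget (so $\min-match(G) \geq \sum_i t_i = q$). What your approach buys is modularity and fully written-out invariant computations; what the paper's buys is a single explicit picture. One small simplification: for $\match(G) = r$ you do not need the somewhat vague remark that ``no maximum matching omits $w_i$'' --- since $G$ has $2r+1$ vertices, $\match(G) \leq r$ is automatic, and your disjoint perfect matchings of the gadgets give the reverse inequality.
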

%
\begin{proof}
  Let $a, b, m, n$ be non-negative integers with $m \leq n$ and $1 \leq n$. 
  Let us consider the following graph $G_{a,b,m,n}$: 
  \newline
  \begin{picture}(400,270)(10,-70)
    \put(50,170){$G_{a,b,m,n}$:}
    \put(200,100){\oval(100,50)}
    \put(170,125){\circle*{5}}
    \put(190,125){\circle*{5}}
    \put(205,125){\circle*{2}}
    \put(210,125){\circle*{2}}
    \put(215,125){\circle*{2}}
    \put(205,150){\circle*{2}}
    \put(210,150){\circle*{2}}
    \put(215,150){\circle*{2}}
    \put(230,125){\circle*{5}}
    \put(200,75){\circle*{5}}
    \put(170,175){\circle*{5}}
    \put(190,175){\circle*{5}}
    \put(230,175){\circle*{5}}
    \put(150,110){\circle*{5}}
    \put(150,95){\circle*{2}}
    \put(150,90){\circle*{2}}
    \put(152,85){\circle*{2}}
    \put(170,75){\circle*{5}}
    \put(250,110){\circle*{5}}
    \put(250,95){\circle*{2}}
    \put(250,90){\circle*{2}}
    \put(248,85){\circle*{2}}
    \put(230,75){\circle*{5}}
    \put(50,25){\circle*{5}}
    \put(100,25){\circle*{5}}
    \put(175,25){\circle*{5}}
    \put(225,25){\circle*{5}}
    \put(300,25){\circle*{5}}
    \put(350,25){\circle*{5}}
    \put(50,-5){\circle*{5}}
    \put(50,-35){\circle*{5}}
    \put(30,5){\circle*{5}}
    \put(30,-25){\circle*{5}}
    \put(100,-5){\circle*{5}}
    \put(100,-35){\circle*{5}}
    \put(80,5){\circle*{5}}
    \put(80,-25){\circle*{5}}
    \put(175,-5){\circle*{5}}
    \put(175,-35){\circle*{5}}
    \put(155,5){\circle*{5}}
    \put(155,-25){\circle*{5}}
    \put(225,-5){\circle*{5}}
    \put(225,-35){\circle*{5}}
    \put(300,-5){\circle*{5}}
    \put(300,-35){\circle*{5}}
    \put(350,-5){\circle*{5}}
    \put(350,-35){\circle*{5}}
    \put(120,5){\circle*{2}}
    \put(125,5){\circle*{2}}
    \put(130,5){\circle*{2}}
    \put(255,5){\circle*{2}}
    \put(260,5){\circle*{2}}
    \put(265,5){\circle*{2}}
    \put(170,125){\line(0,1){50}}
    \put(190,125){\line(0,1){50}}
    \put(230,125){\line(0,1){50}}
    \put(200,75){\line(-3,-1){150}}
    \put(200,75){\line(-2,-1){100}}
    \put(200,75){\line(-1,-2){25}}
    \put(200,75){\line(1,-2){25}}
    \put(200,75){\line(2,-1){100}}
    \put(200,75){\line(3,-1){150}}
    \put(50,25){\line(0,-1){60}}
    \put(50,25){\line(-1,-1){20}}
    \put(50,-5){\line(-1,-1){20}}
    \put(100,25){\line(0,-1){60}}
    \put(100,25){\line(-1,-1){20}}
    \put(100,-5){\line(-1,-1){20}}
    \put(175,25){\line(0,-1){60}}
    \put(175,25){\line(-1,-1){20}}
    \put(175,-5){\line(-1,-1){20}}
    \put(225,25){\line(0,-1){60}}
    \put(300,25){\line(0,-1){60}}
    \put(350,25){\line(0,-1){60}}
    \put(25,-40){$\underbrace{\phantom{aaaaaaaaaaaaaaaaaaaaaaaaaaaaaa}}$}
    \put(100,-55){$a$}
    \put(215,-40){$\underbrace{\phantom{aaaaaaaaaaaaaaaaaaaaaaaaaaa}}$}
    \put(280,-55){$b$}
    \put(165,177){$\overbrace{\phantom{aaaaaaaaaaaaa}}$}
    \put(195,190){$2m$}
    \put(190,100){$K_{2n}$}
  \end{picture}
  \newline
  Then $G_{a,b,m,n}$ satisfies 
  \begin{displaymath}
    \begin{aligned}
      \ind-match (G_{a,b,m,n}) &= a+b+1, \\
      \min-match (G_{a,b,m,n}) &= a+b+n, \\
      \match (G_{a,b,m,n}) &= 2a+b+(n-m) + 2m = 2a + b + n + m. 
    \end{aligned}
  \end{displaymath}
  Therefore we obtain a desired graph $G$ if we can choose $a, b, n, m$ 
  satisfying 
  \begin{displaymath}
    \begin{aligned}
      a+b+1 &= p, \\
      a+b+n &= q, \\
      2a+b+n+m &=r. 
    \end{aligned}
  \end{displaymath}

  \par
  Indeed, we can choose such $a,b,m,n$. 
  First, by $a+b=p-1$, we take $n = q-(a+b) = q-p+1 > 0$. 
  Then 
  \begin{displaymath}
      r = 2a+b+n+m = a+ (p-1) + (q-p+1) + m, 
  \end{displaymath}
  and we have $a+m = r-q$. 
  Note that $a+b=p-1$ and $a+m=r-q$. 

  \par
  \textit{Case 1}: $r-q \leq q-p+1$. 
  We can take $a=0$, $b=p-1$, and $m=r-q$. (Then $m \leq n$.) 

  \par
  \textit{Case 2}: $r-q > q-p+1$. 
  We set $m=q-p+1$ and $a=r-2q+p-1 (>0)$. 
  Then $m=n$ and $b = (p-1) - (r-2q+p-1) = 2q-r \geq 0$. 
  The last inequality follows from the condition $r \leq 2q$. 
\end{proof}



Also, the difference between the regularity and the induced matching number 
as well as the difference between the minimum matching number 
and the regularity can be arbitrary large. 

\par
Recall that the regularity of a (standard graded) $S$-module $M$ is defined by 
\begin{displaymath}
  \reg (M) = \max \{ j-i \; : \; \beta_{ij} (M) \neq 0 \}, 
\end{displaymath}
where $\beta_{ij} (M) := \dim_K [\Tor_i^S (K, M)]_j$, 
the $ij$th Betti number of $M$. 

\begin{theorem}
  \label{CompleteBipartite5cycle4cycle}
  For arbitrary non-negative integers $a, b$, there exists a finite 
  simple connected graph $G$ satisfying 
  \begin{equation}
    \label{eq:distance-reg}
    \begin{aligned}
      \ind-match (G) &= \reg (S/I(G)) - a, \\
      \min-match (G) &= \reg (S/I(G)) + b. 
    \end{aligned}
  \end{equation}
\end{theorem}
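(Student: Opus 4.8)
The plan is to assemble $G$ from two kinds of gadgets whose four invariants are known, and then to fuse them into a single connected graph without disturbing the three relevant invariants. I would first record the invariants of the building blocks. For the four-cycle $C_4$ (which is the complete bipartite graph $K_{2,2}$) one has $\ind-match (C_4) = 1$ and $\min-match (C_4) = 2$, while $\reg (S/I(C_4)) = 1$ by a direct computation (or by noting that $I(C_4) = (x_1,x_3)(x_2,x_4)$ resolves with a one-step-linear resolution). For the five-cycle $C_5$ one has $\ind-match (C_5) = 1$, $\min-match (C_5) = 2$, and $\reg (S/I(C_5)) = 2$, the pentagon being the classical smallest example with $\reg > \ind-match$. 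Thus $C_5$ contributes a surplus of $1$ of the regularity over the induced matching number, and $C_4$ contributes a surplus of $1$ of the minimum matching number over the regularity.

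Next I would exploit additivity over disjoint unions. If $G_0$ is the disjoint union of $a$ copies of $C_5$ and $b$ copies of $C_4$, then $\ind-match$, $\min-match$ and $\match$ are additive by an immediate combinatorial argument, and $\reg (S/I(-))$ is additive because $S/I(G_1 \sqcup G_2) = S/I(G_1) \otimes_K S/I(G_2)$. Hence
\[
\ind-match (G_0) = a+b, \quad \reg (S/I(G_0)) = 2a+b, \quad \min-match (G_0) = 2a+2b,
\]
so that $\reg (S/I(G_0)) - \ind-match (G_0) = a$ and $\min-match (G_0) - \reg (S/I(G_0)) = b$, which is precisely \eqref{eq:distance-reg}. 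This already settles the numerics, and the only remaining issue is connectivity.

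Finally I would connect the $a+b$ cycles into one connected graph $G$. The natural device (reflected in the name of the construction) is a complete bipartite connector: fix one vertex in each cycle and join the chosen vertices through a small complete bipartite graph, or attach every cycle by a bridge to a common complete bipartite hub, arranging the connector so that it neither enlarges an induced matching nor changes a minimum maximal matching, and so that the three invariants either agree with those of $G_0$ or are all shifted by one common constant (which leaves the two differences intact). The two matching numbers can then be verified by a direct combinatorial argument: the connector edges are forced to be useless for an induced matching and redundant for a minimum maximal matching.

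The hard part will be controlling $\reg (S/I(G))$ after the gluing. The sandwich \eqref{eq:all-ineq} only confines it to the interval $[\ind-match (G), \min-match (G)]$, whose length is $a+b$, so regularity must be pinned down independently rather than read off from the matching numbers. I expect to do this by a regularity recursion along the connecting edges, for instance the short exact sequence technique that bounds $\reg (S/I(G))$ in terms of $\reg (S/I(G \setminus v))$ and $\reg (S/I(G \setminus N[v]))$ for a suitable connector vertex $v$, chosen so that the connector behaves, for the purpose of regularity, like a genuine disjoint union. Checking that this recursion returns exactly $2a+b$, rather than a larger value arising from the interaction of the connector with the surrounding cycles, is the step I anticipate as the main obstacle.
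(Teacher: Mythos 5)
Your building blocks, and the numerics you extract from them, are exactly those of the paper: the graph $G_{a,b}$ of Lemma \ref{CompleteBipartite5cycle4cycle-invariants} is literally $a+1$ pentagons and $b$ squares attached to the leaves of the star $K_{1,a+b+1}$, so the ``complete bipartite hub'' you gesture at is the right device, and your values $\reg (S/I(C_5))=2$, $\reg (S/I(C_4))=1$ and the additivity of all four invariants over disjoint unions are correct. The problem is that the proposal stops exactly where the proof has to start. The connector is never specified, and the regularity of the glued graph---which you yourself single out as the main obstacle---is left open. This is a genuine gap, not a routine verification: the paper needs two specific tools to close it, namely Woodroofe's induced-subgraph bound (Lemma \ref{Woodroofe:regularity}) for the lower bound $\reg (K[V_{a,b}]/I(G_{a,b})) \geq 2(a+1)+b$ (applied to the induced subgraph consisting of all pentagons together with one edge from each square), and the Kalai--Meshulam subadditivity theorem (Lemma \ref{KM:regularity}) for the matching upper bound, applied to the decomposition of $E(G_{a,b})$ into the pieces ``cycle at $y_i$ plus the star edge $\{x,y_i\}$,'' each of regularity $2$ (pentagon piece) or $1$ (square piece). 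Your suggested deletion recursion $\reg S/I(G) \leq \max\{\reg S/I(G\setminus v),\, 1+\reg S/I(G\setminus N[v])\}$ with $v$ the hub center would in fact also succeed, because deleting the hub disconnects the graph into cycles; but you did not carry out this computation, nor the (easy but necessary) case analysis for $\min-match$ and $\match$ after gluing.

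Moreover, one concrete expectation in your plan is false and would derail your stated numerics. You assume the connector can be arranged so that the three invariants of $G_0$ are either preserved or all shifted by one common constant. With the star hub this fails: when at least one pentagon is present, the hub edge $\{x,y_1\}$ at a pentagon is $3$-disjoint from the far edge of its own pentagon and from suitably chosen edges of all the other cycles, so $\ind-match$ jumps from $a+b$ to $a+b+1$, while $\reg$ stays $2a+b$ and $\min-match$ stays $2a+2b$. Your glued graph therefore satisfies $\reg (S/I(G)) - \ind-match (G) = a-1$, not $a$. This is precisely why the paper attaches $a+1$ pentagons rather than $a$, obtaining $\ind-match (G_{a,b}) = a+b+2$, $\reg = 2a+b+2$, and $\min-match = 2a+2b+2$. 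The repair is a one-line shift in the number of pentagons, but it is exactly the sort of discrepancy that the verification you postponed would have had to detect; as written, your construction proves the statement with $a$ replaced by $a-1$.
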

\begin{remark}
  Although for a given integer $c$, there exists a simple connected graph 
  $G$ with $\reg S/I(G) = c$ (for example, the cycle of length $3c-1$ 
  is such a graph), 
  we do not know whether there exists a finite simple connected graph 
  $G$ satisfying $\reg S/I(G) = c$ together with (\ref{eq:distance-reg}) 
  for given integers $a,b,c$. 
\end{remark}

Let $G$ be a finite simple graph on $V$. 
When we identify the vertices of $G$ with the variables of the underlying 
polynomial ring $S$ of the edge ideal $I(G)$, we denote $S=K[V]$. 
Theorem \ref{CompleteBipartite5cycle4cycle} immediately follows 
by the following lemma. 
\begin{lemma}
  \label{CompleteBipartite5cycle4cycle-invariants}
  Let $a, b$ be non-negative integers. 
  Let $G_{a,b}$ be the graph consisting of a complete bipartite graph 
  $K_{1, a+b+1}$ with the bipartition 
  $\{ x \} \sqcup \{ y_1, \ldots, y_{a+b+1} \}$
  and $5$-cycles attaching to each $y_1, \ldots, y_{a+1}$ 
  and $4$-cycles attaching to each $y_{a+2}, \ldots, y_{a+b+1}$. 
  We denote by $V_{a,b}$, the vertex set of $G_{a,b}$. 
  Then 
  \begin{displaymath} 
    \begin{aligned}
      \ind-match (G_{a,b}) &= a + b +2, \\
      \min-match (G_{a,b}) &= 2a + 2b + 2, \\
      \match (G_{a,b}) &= 2a + 2b + 3, \\
      \reg (K[V_{a,b}]/I(G_{a,b})) &= 2a + b + 2. 
    \end{aligned}
  \end{displaymath}
\end{lemma}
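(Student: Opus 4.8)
The plan is to compute the four invariants more or less independently, handling the three matching numbers by direct combinatorial arguments and reserving homological machinery for the regularity. Throughout I fix notation for the gadgets: write the pendant $5$-cycle at $y_i$ (for $1\le i\le a+1$) as $y_i,p_i,q_i,r_i,s_i$ and the pendant $4$-cycle at $y_j$ (for $a+2\le j\le a+b+1$) as $y_j,u_j,v_j,w_j$. The key structural observations are that $E(G_{a,b})$ is the disjoint union of the $a+b+1$ star edges $\{x,y_i\}$ together with the edge sets of the $a+1$ cycles $C_5$ and the $b$ cycles $C_4$, that each such cycle is an induced subgraph of $G_{a,b}$, and that deleting $x$ disconnects $G_{a,b}$ into exactly these cycles.

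For $\ind-match$ and $\match$ the bounds come from per-gadget estimates. For the induced matching number, since any induced matching meets each $C_5$ or $C_4$ in an induced matching of that cycle (which has size at most $1$) and contains at most one star edge (all star edges share $x$), one gets $\ind-match (G_{a,b})\le (a+b+1)+1$; a matching realizing equality is $\{x,y_1\}$ together with the central edge $\{q_i,r_i\}$ of each $C_5$ and one edge of each $C_4$, which I would check is $3$-disjoint. For the matching number, at most one edge meets $x$ and every other edge lies in a gadget, where a $C_5$ (resp.\ $C_4$) contributes at most $2$; hence $\match (G_{a,b})\le 1+2(a+1)+2b=2a+2b+3$, and this is attained by taking $\{x,y_1\}$, a size-$2$ matching of each $C_5$ avoiding its $y_i$, and a perfect matching of each $C_4$.

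For $\min-match$ the upper bound is again a construction: in each $C_5$ use $\{y_i,p_i\},\{q_i,r_i\}$ (leaving only $s_i$, whose neighbours are matched) and in each $C_4$ use a perfect matching; this is maximal of size $2(a+1)+2b=2a+2b+2$. The lower bound is the delicate point. I would pass to the set $U$ of $M$-unmatched vertices, which is independent by maximality, and use $|M|=(|V_{a,b}|-|U|)/2$ with $|V_{a,b}|=5a+4b+6$, so that it suffices to prove $|U|\le a+2$. Analysing each gadget under the constraints that $U$ is independent and that $G_{a,b}[V_{a,b}\setminus U]$ has a perfect matching, one shows: a $C_4$ has no unmatched vertex unless the star edge $\{x,y_j\}$ is used (then at most one), and a $C_5$ has exactly one unmatched vertex unless the star edge $\{x,y_i\}$ is used (then at most two); moreover $x$ is unmatched exactly when no star edge is used. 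Since at most one star edge occurs, a short case check on where (if anywhere) it is attached shows $|U|\le a+2$ in every case, giving $\min-match (G_{a,b})\ge 2a+2b+2$.

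The regularity is the main obstacle, and I would obtain it by squeezing between a lower bound from an induced subgraph and an upper bound from the deletion short exact sequence at $x$. For the lower bound, $G_{a,b}\setminus x$ is the induced subgraph equal to the disjoint union of $a+1$ copies of $C_5$ and $b$ copies of $C_4$; by additivity of regularity over disjoint unions and the known values $\reg (S/I(C_5))=2$ and $\reg (S/I(C_4))=1$, this has regularity $2(a+1)+b=2a+b+2$, and regularity is monotone under passing to induced subgraphs, so $\reg (S/I(G_{a,b}))\ge 2a+b+2$. For the upper bound, I would apply the standard inequality $\reg (S/I(G))\le\max\{\reg (S/I(G\setminus N[x]))+1,\ \reg (S/I(G\setminus x))\}$ coming from $0\to S/(I:x)(-1)\to S/I\to S/(I+(x))\to 0$. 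Here $G_{a,b}\setminus N[x]$ is the disjoint union of $a+1$ paths $P_4$ and $b$ paths $P_3$, each of regularity $1$, so $\reg (S/I(G_{a,b}\setminus N[x]))=a+b+1$; combined with $\reg (S/I(G_{a,b}\setminus x))=2a+b+2$ this yields $\reg (S/I(G_{a,b}))\le\max\{a+b+2,\,2a+b+2\}=2a+b+2$. The two bounds coincide, and the essential inputs are the additivity and induced-subgraph monotonicity of regularity together with the exact regularities of the small cycles and paths.
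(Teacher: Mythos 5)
Your proposal is correct, and it reaches all four values by a route that differs from the paper's in one essential place. The matching computations are equivalent in substance: the paper establishes both $\min-match$ and $\match$ by one case analysis showing every maximal matching has size $2a+2b+2$ or $2a+2b+3$ (according to whether and where a star edge $\{x,y_i\}$ is used), while you bound $\match$ per gadget and get the $\min-match$ lower bound by counting unmatched vertices ($\#U\le a+2$, using $\#V_{a,b}=5a+4b+6$); both arguments are sound, and your per-gadget facts (a $C_4$ component of the matching restriction must be perfect, a $C_5$ leaves exactly one vertex exposed, etc.) check out. For the regularity lower bound you and the paper use essentially the same content: the paper invokes Woodroofe's bound for an induced subgraph consisting of $b$ edges and $a+1$ copies of $C_5$, whereas you take the induced subgraph $G_{a,b}\setminus x$ (all the cycles) and unpack the same estimate into additivity over disjoint unions, induced-subgraph monotonicity, and $\reg (S/I(C_5))=2$, $\reg (S/I(C_4))=1$. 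The genuine divergence is the regularity upper bound: the paper covers $E(G_{a,b})$ by the $a+b+1$ subgraphs ``cycle plus its star edge,'' each of regularity $2$ or $1$, and applies the Kalai--Meshulam subadditivity theorem; you instead apply the standard vertex-deletion bound $\reg (S/I(G))\le\max\{\reg (S/I(G\setminus N[x]))+1,\ \reg (S/I(G\setminus x))\}$ at the hub $x$, which reduces everything to disjoint unions of cycles ($2a+b+2$) and paths ($a+b+1$), giving $\max\{a+b+2,\,2a+b+2\}=2a+b+2$ since $a\ge 0$. Your route trades the Kalai--Meshulam theorem for the more elementary colon-ideal exact sequence and exploits the fact that a single vertex disconnects the graph; the paper's covering argument does not need such a hub and is the same tool it reuses later (e.g., for the graph $H_k$), which is presumably why it was chosen. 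One small point to tighten: in your induced matching of size $a+b+2$, the edge chosen from each $4$-cycle must be the one \emph{not} containing $y_j$ (otherwise it fails to be $3$-disjoint from $\{x,y_1\}$ via the edge $\{x,y_j\}$); with that choice, which is what the paper makes explicit, the verification goes through.
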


In order to prove Lemma \ref{CompleteBipartite5cycle4cycle-invariants}, 
we use the following two results. 
\begin{lemma}[{Woodroofe \cite[Corollary 10]{Woodroofe-regularity}}]
  \label{Woodroofe:regularity}
  If a graph $G$ has an induced subgraph $H$ which consists of 
  disjoint union of $m$ edges and cycles 
  $C_{3 i_1 + 2}, \ldots, C_{3 i_n + 2}$, 
  then $\reg S/I(G) \geq m + n + \sum_{j=1}^n i_j$. 
\end{lemma}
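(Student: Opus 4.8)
The plan is to reduce the statement to two general principles about regularity of edge ideals together with the computation of two elementary building blocks. Write $\Ind(G)$ for the independence complex of $G$, so that $I(G)$ is its Stanley--Reisner ideal. The first principle is monotonicity under induced subgraphs: if $H = G[W]$ is the subgraph induced on a vertex subset $W$, then $\reg(K[W]/I(H)) \leq \reg(S/I(G))$. I would deduce this from Hochster's formula, which expresses the fine Betti number $\beta_{i,\sigma}(S/I(G))$, for $\sigma \subseteq [n]$, as $\dim_K \widetilde{H}_{|\sigma|-i-1}(\Ind(G[\sigma]);K)$. For any $\sigma \subseteq W$ one has $\Ind(G[\sigma]) = \Ind(H[\sigma])$, so $\beta_{i,\sigma}(S/I(G)) = \beta_{i,\sigma}(K[W]/I(H))$; hence every value $j-i$ occurring among the nonzero Betti numbers of $K[W]/I(H)$ also occurs for $S/I(G)$, which gives the inequality.

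The second principle is additivity under disjoint unions. Since the induced subgraph $H$ in the statement is a disjoint union of its components on pairwise disjoint vertex sets, $K[V(H)]/I(H)$ is the tensor product over $K$ of the quotient rings attached to the components, and regularity is additive under such tensor products via the K\"unneth formula for $\Tor$. Thus $\reg(K[V(H)]/I(H))$ is the sum of the regularities of a single edge, taken $m$ times, and of the cycles $C_{3i_1+2},\dots,C_{3i_n+2}$.

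It then remains to evaluate the building blocks and assemble. A single edge $K_2$ has $\reg(S/I(K_2)) = 1$, since $\Ind(K_2)$ is a pair of points with $\widetilde{H}_0 \neq 0$. For a cycle $C_{3i+2}$ I would establish $\reg(S/I(C_{3i+2})) \geq i+1$ (the lower bound is all that is needed). Granting these values, combining the two principles gives
\[
\reg(S/I(G)) \;\geq\; \reg(K[V(H)]/I(H)) \;=\; m\cdot 1 + \sum_{j=1}^{n}(i_j+1) \;=\; m + n + \sum_{j=1}^{n} i_j,
\]
which is exactly the assertion.

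The main obstacle is the cycle computation, because the induced-matching bound alone only yields $\reg(S/I(C_{3i+2})) \geq i$, one short of the target; the extra $+1$ must be extracted from the topology of the independence complex. I would obtain it by recalling the known homotopy type $\Ind(C_{3i+2}) \simeq S^{i}$, which gives $\widetilde{H}_{i}(\Ind(C_{3i+2})) \neq 0$; applying Hochster's formula with $\sigma$ the full vertex set then forces $\beta_{2i+1,\,3i+2}(S/I(C_{3i+2})) \neq 0$, contributing $(3i+2)-(2i+1) = i+1$ to the regularity. Alternatively, one may simply cite the classical values of $\reg(S/I(C_n))$ for cycles rather than recompute the relevant homology.
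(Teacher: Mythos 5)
Your proof is correct, but be aware that the paper contains no proof of this statement to compare against: the lemma is imported verbatim from Woodroofe \cite[Corollary 10]{Woodroofe-regularity} and used as a black box (its only role here is in the proof of Lemma \ref{CompleteBipartite5cycle4cycle-invariants}). What you have written is essentially the standard derivation underlying Woodroofe's corollary, and every step checks: (1) monotonicity under induced subgraphs is exactly the restriction property of Hochster's formula, since for $\sigma \subseteq W$ one has $\Ind(G[\sigma]) = \Ind(H[\sigma])$, so the squarefree graded Betti numbers of $K[W]/I(H)$ occur among those of $S/I(G)$; (2) for a disjoint union, $K[V(H)]/I(H)$ is a tensor product over $K$ of the rings attached to the components, and the K\"{u}nneth isomorphism for $\Tor$ makes graded Betti numbers multiply with degrees adding, so regularity is additive --- in particular at least the sum of the lower bounds you need; (3) $\reg S/I(K_2) = 1$, and your cycle computation is right: by Kozlov's homotopy type $\Ind(C_n) \simeq S^{k-1}$ for $n = 3k \pm 1$, taking $n = 3i+2 = 3(i+1)-1$ gives $\Ind(C_{3i+2}) \simeq S^{i}$, and Hochster's formula with $\sigma$ the full vertex set yields $\beta_{2i+1,\,3i+2}(S/I(C_{3i+2})) \neq 0$, hence $\reg S/I(C_{3i+2}) \geq i+1$, consistent with the known value $\lfloor n/3 \rfloor + 1$ for $n \equiv 2 \pmod{3}$. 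You also correctly isolate the one genuinely nontrivial point: the induced-matching bound gives only $i$ per cycle, and the extra $+1$ must come from the topology of $\Ind(C_{3i+2})$, so the sphere homotopy type (or, equivalently, the classical regularity formula for cycles) is the single ingredient requiring an external citation, e.g.\ to Kozlov --- just as the authors of the paper chose to cite Woodroofe for the statement as a whole.
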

\begin{lemma}[{Kalai and Meshulam \cite[Theorem 1.2]{KM}}]
  \label{KM:regularity}
  Let $I_1, \ldots, I_s$ be squarefree monomial ideals of $S$. 
  Then 
  \begin{displaymath}
    \reg S/(I_1 + \cdots + I_s) \leq \sum_{j=1}^s \reg S/I_j. 
  \end{displaymath}
\end{lemma}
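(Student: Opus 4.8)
The plan is to read this as the Kalai--Meshulam subadditivity theorem and to split it into a cheap algebraic reduction, a translation into topology through the Stanley--Reisner dictionary, and one genuinely hard topological estimate. Granting the case $s=2$, the general statement follows by induction on $s$: since a finite sum of squarefree monomial ideals is again a squarefree monomial ideal, I set $J = I_1 + \cdots + I_{s-1}$ and combine the two-ideal bound $\reg S/(J + I_s) \le \reg S/J + \reg S/I_s$ with the inductive estimate $\reg S/J \le \sum_{j=1}^{s-1} \reg S/I_j$. So the whole difficulty is concentrated in proving $\reg S/(I_1 + I_2) \le \reg S/I_1 + \reg S/I_2$.

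To handle $s=2$, I would invoke the Stanley--Reisner dictionary. Write $I_j = I_{\Delta_j}$ for simplicial complexes $\Delta_1, \Delta_2$ on $[n]$. Hochster's formula $\beta_{i,\sigma}(S/I_\Delta) = \dim_K \widetilde{H}_{|\sigma|-i-1}(\Delta_\sigma; K)$ identifies $\reg S/I_\Delta$ with the Leray number $L(\Delta)$, the least $d$ for which $\widetilde{H}_i(\Delta_W; K) = 0$ for all $i \ge d$ and all $W \subseteq [n]$. Moreover $I_{\Delta_1} + I_{\Delta_2} = I_{\Delta_1 \cap \Delta_2}$, because a squarefree monomial lies in the sum exactly when its support is a nonface of $\Delta_1$ or of $\Delta_2$. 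Thus the claim becomes the subadditivity of Leray numbers under intersection, $L(\Delta_1 \cap \Delta_2) \le L(\Delta_1) + L(\Delta_2)$.

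The natural attack on this is induction on $n$ through the identities $(\Delta_1 \cap \Delta_2)\setminus v = (\Delta_1\setminus v)\cap(\Delta_2\setminus v)$ and $\link_{\Delta_1\cap\Delta_2}(v) = \link_{\Delta_1}(v)\cap\link_{\Delta_2}(v)$. Writing $d_j = L(\Delta_j)$, the deletion of a $d$-Leray complex is again $d$-Leray, since it is an induced subcomplex, so the induction hypothesis makes $(\Delta_1\cap\Delta_2)\setminus v$ a $(d_1+d_2)$-Leray complex; this already controls every proper induced subcomplex of $\Delta_1 \cap \Delta_2$. It then remains only to kill $\widetilde{H}_i(\Delta_1 \cap \Delta_2)$ for $i \ge d_1 + d_2$, and the Mayer--Vietoris sequence for the decomposition $\Delta = (\Delta\setminus v) \cup \Star(v)$, whose overlap is $\link(v)$ and whose second piece is contractible, reduces this to the vanishing of $\widetilde{H}_{i-1}(\link_{\Delta_1\cap\Delta_2}(v))$ for $i - 1 \ge d_1 + d_2 - 1$.

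Here lies the main obstacle. One checks, again from the $\Star(v)$-contractible Mayer--Vietoris sequence, that the link of a $d$-Leray complex is $d$-Leray; but this is \emph{not} enough. Feeding the information ``$\link_{\Delta_j}(v)$ is $d_j$-Leray'' into the induction only makes $\link_{\Delta_1\cap\Delta_2}(v)$ a $(d_1+d_2)$-Leray complex, whereas the Mayer--Vietoris step above demands $(d_1+d_2-1)$-Leray. The argument therefore misses by exactly one, and simple examples, such as a full simplex whose links do not lower the Leray number, show that the link cannot in general be improved to $(d-1)$-Leray. Closing this off-by-one gap is precisely the technical heart of Kalai and Meshulam's work: instead of tracking only the dimensions of the homology of induced subcomplexes, they record the full chain-level data in the notion of a \emph{Leray complex} and establish subadditivity for that finer invariant by a homological-algebra argument on the associated complexes. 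I expect reconstructing this refined inductive bookkeeping, and not the reduction or the dictionary, to be where all the real work sits.
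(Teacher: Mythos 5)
Your reductions are all correct as far as they go: the induction on $s$ (using that $J=I_1+\cdots+I_{s-1}$ is again a squarefree monomial ideal), the identity $I_{\Delta_1}+I_{\Delta_2}=I_{\Delta_1\cap\Delta_2}$, the identification of $\reg S/I_\Delta$ with the Leray number $L(\Delta)$ via Hochster's formula, and even your diagnosis of why the naive vertex-deletion induction fails --- the link of a $d$-Leray complex is $d$-Leray but in general not $(d-1)$-Leray, so the Mayer--Vietoris step for $\Delta=(\Delta\setminus v)\cup\Star(v)$ demands vanishing of $\widetilde{H}_{i-1}(\link_{\Delta_1\cap\Delta_2}(v))$ one degree below what the inductive hypothesis supplies. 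But this is where the genuine gap sits: the entire content of the lemma is the case $s=2$, i.e.\ the subadditivity $L(\Delta_1\cap\Delta_2)\le L(\Delta_1)+L(\Delta_2)$, and at exactly that point you stop, observe that the obvious argument misses by one, and defer to ``Kalai and Meshulam's refined bookkeeping'' without reconstructing any of it. Locating the hard step accurately is not the same as carrying it out; as a proof, the proposal asserts the key inequality rather than proving it, so it cannot be accepted as a demonstration of the statement.

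For calibration against the paper: the paper does not prove this lemma either --- it is quoted as a known result from Kalai and Meshulam \cite[Theorem 1.2]{KM} and used as a black box in the proofs of Lemma \ref{CompleteBipartite5cycle4cycle-invariants} and Lemma \ref{H_k}. So for the paper's purposes a citation suffices, and your reduction-to-$s=2$ plus the Stanley--Reisner dictionary would be a reasonable framing preceding such a citation. If you want an actual proof, the missing piece must come from the finer homological argument of \cite{KM} itself, or from one of the later alternative routes (for instance Herzog's generalized Taylor complex construction, which yields the same subadditivity for monomial ideals); nothing in your write-up substitutes for either, and the off-by-one obstruction you correctly identified is precisely why no elementary Mayer--Vietoris induction is known to close it.
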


\begin{proof}[Proof of Lemma \ref{CompleteBipartite5cycle4cycle-invariants}]
%
  We first compute $\ind-match (G_{a,b})$. 
  Note that we cannot choose $2$ edges which are $3$-disjoint in $G_{a,b}$ 
  from each $4$-cycle or each $5$-cycle. 
  The same is true for $K_{1, a+b+1}$. 
  Therefore $\ind-match (G_{a,b}) \leq a+b+2$. 
  Indeed, there exist $a+b+2$ edges of $G_{a,b}$ which
  form an induced matching of $G_{a,b}$: 
  we choose an edge which does not contain $y_i$ from each $4$-cycle, 
  the edge which is $3$-disjoint with $\{ x, y_i \}$ from each 
  $5$-cycle, and $\{ x, y_1 \}$. 

  \par
  We next compute $\reg K[V_{a,b}]/I(G_{a,b})$. 
  Take an edge from each $4$-cycle. 
  Then the graph which consists of 
  these $b$ edges and $a + 1$ copies of $5$-cycles is an induced subgraph of 
  $G_{a,b}$. 
  Therefore by Lemma \ref{Woodroofe:regularity}, we have
  \begin{displaymath}
    \reg (K[V_{a,b}]/I(G_{a,b})) \geq b + 2(a+1). 
  \end{displaymath}

  \par
  In order to prove the opposite inequality, 
  we define subgraphs $G_1, \ldots, G_{a+b+1}$ of $G_{a,b}$. 
  For $1 \leq i \leq a+1$, let $G_i$ be the subgraph of $G_{a,b}$ 
  consisting of the $5$-cycle containing $y_i$ and $\{ x, y_i \}$. 
  Also for $a+2 \leq i \leq a+b+1$, 
  let $G_i$ be the subgraph of $G_{a,b}$ consisting of the $4$-cycle 
  containing $y_i$ and $\{ x, y_i \}$. 
  Then $E(G_{a,b}) = \bigcup_{i=1}^{a+b+1} E(G_i)$. 
  Since 
  \begin{displaymath}
    \reg (K[V_{a,b}]/I(G_i)K[V_{a,b}]) = \left\{ 
    \begin{alignedat}{3}
      &2, &\quad &1 \leq i \leq a+1, \\
      &1, &\quad &a+2 \leq i \leq a+b+1, 
    \end{alignedat}
    \right. 
  \end{displaymath}
  we have $\reg (K[V_{a,b}]/I(G_{a,b})) \leq 2(a+1)+b$ 
  by Lemma \ref{KM:regularity}. 

  \par
  Therefore $\reg (K[V_{a,b}]/I(G_{a,b})) = b + 2(a+1)$ holds. 

  \par
  Finally we compute $\min-match (G_{a,b})$ and $\match (G_{a,b})$. 
  Note that 
  \begin{displaymath}
    \begin{aligned}
      &\min-match (C_4) = \match (C_4) = 2, \\
      &\min-match (C_5) = \match (C_5) = 2, \\
      &\min-match (K_{1, a+b+1}) = \match (K_{1, a+b+1}) = 1. 
    \end{aligned}
  \end{displaymath}
  Let $\mathcal{M}$ be a maximal matching of $G$. 
  If $\{ x, y_i \} \notin \mathcal{M}$ for $i = 1, \ldots, a+b+1$, 
  then we have $\# \mathcal{M}  = 2(a+1) + 2b$. 
  If $\{ x, y_i \} \in \mathcal{M}$ for some $1 \leq i \leq a+1$, 
  then the cardinality of $\mathcal{M}$ is either $1 + 2(a+1) + 2b$ or 
  $1 + 1 + 2a + 2b = 2(a+1) + 2b$. 
  If $\{ x, y_i \} \in \mathcal{M}$ for some $a+2 \leq i \leq a+b+1$, 
  then we have $\# \mathcal{M} = 2(a+1) + 2b$. 
  Therefore we have the desired assertions. 
\end{proof}

%
%
%

\par
Now we return to the first inequalities (\ref{eq:all-ineq}). 
There are the following $8$ cases: 
\begin{enumerate}
\item[(i)] $\ind-match (G) = \reg S/I(G) = \min-match (G) = \match (G)$. 
\item[(ii)] $\ind-match (G) = \reg S/I(G) = \min-match (G) < \match (G)$. 
\item[(iii)] $\ind-match (G) = \reg S/I(G) < \min-match (G) = \match (G)$. 
\item[(iv)] $\ind-match (G) = \reg S/I(G) < \min-match (G) < \match (G)$. 
\item[(v)] $\ind-match (G) < \reg S/I(G) = \min-match (G) = \match (G)$. 
\item[(vi)] $\ind-match (G) < \reg S/I(G) = \min-match (G) < \match (G)$. 
\item[(vii)] $\ind-match (G) < \reg S/I(G) < \min-match (G) = \match (G)$. 
\item[(viii)] $\ind-match (G) < \reg S/I(G) < \min-match (G) < \match (G)$. 
\end{enumerate}

For each case, is there a finite simple connected graph $G$ 
satisfying the inequalities? 
The following theorem is an answer to the question. 
\begin{theorem}
  \label{graph-ineq}
  There exists a finite simple connected graph $G$ satisfying the inequalities. 
  In particular, we can construct an infinite family of 
  finite simple connected graphs 
  satisfying each inequalities except for the case (v). 
\end{theorem}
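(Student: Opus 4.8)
The proof splits according to the three links in (\ref{eq:all-ineq}): each of (i)--(viii) prescribes, for every link, whether it is an equality or a strict inequality, and the plan is to realize each pattern by an explicit connected graph, together with an explicit infinite family in every case except (v). The four cases (ii), (iv), (vi), (viii) in which the last link $\min-match (G) < \match (G)$ is strict are handled uniformly by the graphs $G_{a,b}$ of Lemma \ref{CompleteBipartite5cycle4cycle-invariants}, for which $\ind-match (G_{a,b}) = a+b+2$, $\reg (K[V_{a,b}]/I(G_{a,b})) = 2a+b+2$, $\min-match (G_{a,b}) = 2a+2b+2$ and $\match (G_{a,b}) = 2a+2b+3$; thus the first gap equals $a$, the second equals $b$, and the third is always $1$. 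Hence $G_{0,b}$ with $b \geq 1$ gives (iv), $G_{a,0}$ with $a \geq 1$ gives (vi), and $G_{a,b}$ with $a,b \geq 1$ gives (viii), each an infinite family with growing invariants, while $G_{0,0}$ realizes (ii). For an infinite family in (ii) I would attach further pendant edges at the vertex $x$ of $G_{0,0}$; a short check shows this keeps $\ind-match = \reg = \min-match = 2$ and $\match = 3$, producing infinitely many non-isomorphic connected graphs.

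The remaining cases (i), (iii), (v), (vii) all demand $\min-match (G) = \match (G)$, i.e.\ equimatchable graphs, and here the constructions are more delicate. Case (i), $\ind-match = \reg = \min-match = \match$, is supplied by the Cameron--Walker graphs, which satisfy $\ind-match = \match$ and therefore collapse the whole chain, giving an infinite family with arbitrarily large invariants. Case (iii), $\ind-match = \reg < \min-match = \match$, is realized by the complete graphs $K_n$ with $n \geq 4$: the uncovered vertices of a maximal matching form an independent set, hence at most one vertex is uncovered, so every maximal matching has size $\lfloor n/2 \rfloor$ and $\min-match (K_n) = \match (K_n) = \lfloor n/2 \rfloor$, while $\ind-match (K_n) = 1$ and $\reg (S/I(K_n)) = 1$ (the edge ideal of a complete graph has a linear resolution). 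Case (v), $\ind-match < \reg = \min-match = \match$, is realized by the single graph $C_5$: it is equimatchable with $\min-match = \match = 2$, one has $\ind-match (C_5) = 1$, and $\reg (S/I(C_5)) = 2$ by Lemma \ref{Woodroofe:regularity} (as $C_5 = C_{3\cdot 1 + 2}$) combined with the bound $\reg (S/I(C_5)) \leq \min-match (C_5)$ from (\ref{eq:all-ineq}).

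For case (vii), $\ind-match < \reg < \min-match = \match$, the plan is to combine two features in one connected equimatchable graph: several well-separated induced $5$-cycles (more generally induced cycles $C_{3i+2}$), which by Lemma \ref{Woodroofe:regularity} force $\reg$ strictly above $\ind-match$, together with a complete or complete bipartite ``core'' whose edges push $\min-match$ strictly above $\reg$ while $\reg$ stays under control via Lemma \ref{KM:regularity}. Concretely I would start from an equimatchable complete (or complete bipartite) core pinning down $\min-match = \match$, graft the $5$-cycles onto edges of the core so that they remain induced and pairwise non-adjacent, and then enlarge the core or the number of grafted cycles to obtain the infinite family.

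The main obstacle throughout the equimatchable regime, and most acutely in (vii), is to guarantee that \emph{every} maximal matching of the assembled graph has the same cardinality: grafting a cycle or adjoining a bridge typically creates a short maximal matching and destroys the equality $\min-match = \match$, so the real work is a Gallai--Edmonds-type verification of equimatchability of the constructed graphs while simultaneously maintaining both strict gaps $\ind-match < \reg$ and $\reg < \min-match$. It is exactly this rigidity that explains why (v) is excepted in the statement: there the extra constraint $\reg = \match$ forces $\reg = \min-match = \match$, and imposing in addition $\ind-match < \reg$ on a connected graph is so restrictive that, although $C_5$ works as a single example, the above scheme yields no infinite connected family.
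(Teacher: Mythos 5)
Your handling of cases (i)--(vi) and (viii) is essentially correct, and in places cleaner than the paper's: the paper uses $P_{6n}$ for (ii) and the whiskered complete graph $W(K_n)$ for (iv), whereas you observe that the single family $G_{a,b}$ of Lemma \ref{CompleteBipartite5cycle4cycle-invariants} already covers (ii), (iv), (vi) and (viii) at once (the gaps being $a$, $b$, $1$), and your pendant-edge trick at $x$ gives a legitimate infinite family for (ii). Your arguments for (i), (iii) and (v) also match the paper's, modulo replacing the chordality/H\`a--Van Tuyl argument for $\reg(S/I(K_n))=1$ by the linear-resolution fact, which is fine.

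However, case (vii) is a genuine gap, and it is precisely the technical heart of this theorem. You describe a plan (an equimatchable core with grafted induced $5$-cycles) and then explicitly concede that ``the real work is a Gallai--Edmonds-type verification of equimatchability \dots while simultaneously maintaining both strict gaps'' --- that work is never carried out, and no concrete graph is exhibited. Worse, the constraints are tighter than your sketch acknowledges: since you need $\ind-match(G) < \reg S/I(G) < \min-match(G)$, the gap $\min-match(G) - \ind-match(G)$ must be at least $2$, and pinning $\reg$ \emph{strictly} between the two cannot be done by Lemma \ref{Woodroofe:regularity} and Lemma \ref{KM:regularity} alone in the naive grafted constructions (e.g., $K_4$ with one $5$-cycle grafted on an edge is equimatchable with $\min-match = 3$ but has $\ind-match = 2$, leaving no room for a strict intermediate regularity). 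The paper resolves this by constructing the explicit family $H_k$ ($k \geq 2$) and proving Lemma \ref{H_k}: the equalities $\min-match(H_k) = \match(H_k) = 2k$ and $\ind-match(H_k) = k$ are checked by a direct case analysis of maximal (induced) matchings, the upper bound $\reg(K[V_k]/I(H_k)) \leq k+1$ follows from Lemma \ref{KM:regularity}, but the crucial lower bound $\reg \geq k+1$ requires showing $\beta_{2k+1,3k+2}(K[W_k]/I(H_k')) \neq 0$ for an induced subgraph $H_k'$, via an explicit cycle in a Lyubeznik resolution together with Hochster's formula. Nothing in your proposal substitutes for this nonvanishing Betti number computation, so the proof of the theorem is incomplete.
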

A graph $G$ is called chordal if any cycle in $G$ of length more than $3$ 
has a chord. H\`{a} and Van Tuyl \cite{HaVThypergraph} proved that 
$\reg S/I(G) = \ind-match (G)$ holds for a chordal graph $G$. 
\begin{proof}[Proof of Theorem \ref{graph-ineq}]
  \textit{Case (i):} 
  The Cameron--Walker graphs \cite{HHKO} are just such graphs $G$. 

  \par
  \textit{Case (ii):} 
  The path graph $P_{6n}$ with $6n$ vertices $(n \geq 1)$ 
  satisfies the inequalities. 
  Indeed, $\ind-match (P_{6n}) = \min-match (P_{6n}) =2n$ and 
  $\match (P_{6n}) = 3n$. 
  (Note that $P_{6n}$ has a dominating induced matching; 
  see Section \ref{sec:DIM}.) 

  \par
  \textit{Case (iii):} 
  The complete graph $K_n$ with $n$ vertices $(n \geq 4)$ 
  satisfies the inequalities. 
  Indeed, $\ind-match (K_n) = 1$ and 
  $\min-match (K_n) = \match (K_n) = \lfloor n/2 \rfloor \geq 2$. 
  Also, since $K_n$ is a chordal graph, it follows that 
  $\ind-match (K_n) = \reg (S/I(K_n))$ by \cite{HaVThypergraph}. 

  \par
  \textit{Case (iv):} 
  The fully whiskered graph $W(K_n)$ of the complete graph $K_n$ 
  $(n \geq 3)$ satisfies the inequalities. 
  Here $W(K_n)$ is defined as follows: 
  let $x_1, \ldots, x_n$ be vertices of $K_n$ and let 
  $y_1, \ldots, y_n$ be new vertices. 
  Then $W(K_n)$ is the graph on $\{ x_1, \ldots, x_n,\, y_1, \ldots, y_n \}$ 
  whose edge set is $E(K_n) \cup \{ \{ x_i, y_i \} : i=1, 2, \ldots, n \}$. 
  Note that $W(K_n)$ is also a chordal graph. 
  Thus $\ind-match (G) = \reg S/I(G)$ holds. 
  Since 
  $\ind-match (G) = 1$, 
  $\min-match (G) = \lceil n/2 \rceil$ and $\match (G) = n$, 
  $G$ satisfies the desired inequalities. 

  \par
  \textit{Case (v):} The $5$-cycle $C_5$ satisfies the desired inequalities. 
  Indeed, $\ind-match (C_5) = 1$ and 
  $\reg S/I(C_5) = \min-match (C_5) = \match (C_5) = 2$. 

  \par
  \textit{Cases (vi) and (viii)}: 
  Consider the graph $G_{a,b}$ 
  on Lemma \ref{CompleteBipartite5cycle4cycle-invariants}. 
  If $a \neq 0$ and $b=0$, then the graph satisfies the inequalities (vi). 
  If $a \neq 0$ and $b \neq 0$, then the graph satisfies the 
  inequalities (viii). 

  \par
  \textit{Cases (vii):} 
  For an integer $k \geq 2$, let $H_k$ be the graph on 
  \begin{displaymath}
    V_k := \{ u,v, \; x_1, x_2, \ldots, x_k, \; y_1, y_2, \ldots, y_k, \; 
       z_{11}, z_{12}, z_{21}, z_{22}, \ldots, z_{k1}, z_{k2} \} 
  \end{displaymath}
  with the following edges: 
  \begin{equation}
    \label{eq:label-edgeH_k}
    \begin{alignedat}{3}
      \mathfrak{e}_i &:= \{ u, x_i \}, &\  
      \mathfrak{f}_i &:= \{ v, y_i \}, \\
      \mathfrak{g}_{i1} &:= \{ x_i, z_{i1} \}, &\  
      \mathfrak{g}_{i2} &:= \{ x_i, z_{i2} \}, \\
      \mathfrak{h}_{i1} &:= \{ y_i, z_{i1} \}, &\  
      \mathfrak{h}_{i2} &:= \{ y_i, z_{i2} \}, 
    \end{alignedat}
    \quad i= 1, 2, \ldots, k; 
  \end{equation}
  see Figure \ref{fig:H_k}. 

  \par
  Then we see from Lemma \ref{H_k} below 
  that $H_k$ satisfies the inequalities (vii). 
\end{proof}

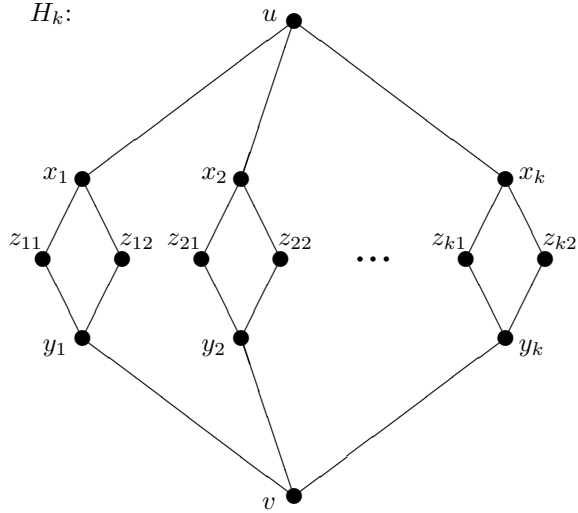
\begin{figure}[htbp]
  \begin{center}
    \begin{picture}(200,230)(-20,-105)
      \put(-20,90){$H_k$:}
      \put(-15,0){\circle*{6}}
      \put(15,0){\circle*{6}}
      \put(45,0){\circle*{6}}
      \put(75,0){\circle*{6}}
      \put(105,0){\circle*{2}}
      \put(110,0){\circle*{2}}
      \put(115,0){\circle*{2}}
      \put(145,0){\circle*{6}}
      \put(175,0){\circle*{6}}
      \put(0,30){\circle*{6}}
      \put(60,30){\circle*{6}}
      \put(160,30){\circle*{6}}
      \put(0,-30){\circle*{6}}
      \put(60,-30){\circle*{6}}
      \put(160,-30){\circle*{6}}
      \put(80,90){\circle*{6}}
      \put(80,-90){\circle*{6}}
      \put(-28,5){$z_{11}$}
      \put(14,5){$z_{12}$}
      \put(32,5){$z_{21}$}
      \put(74,5){$z_{22}$}
      \put(132,5){$z_{k1}$}
      \put(174,5){$z_{k2}$}
      \put(-15,30){$x_{1}$}
      \put(45,30){$x_{2}$}
      \put(165,30){$x_{k}$}
      \put(-15,-35){$y_{1}$}
      \put(45,-35){$y_{2}$}
      \put(165,-35){$y_{k}$}
      \put(68,90){$u$}
      \put(68,-95){$v$}
      \put(-15,0){\line(1,2){15}}
      \put(45,0){\line(1,2){15}}
      \put(145,0){\line(1,2){15}}
      \put(-15,0){\line(1,-2){15}}
      \put(45,0){\line(1,-2){15}}
      \put(145,0){\line(1,-2){15}}
      \put(15,0){\line(-1,2){15}}
      \put(75,0){\line(-1,2){15}}
      \put(175,0){\line(-1,2){15}}
      \put(15,0){\line(-1,-2){15}}
      \put(75,0){\line(-1,-2){15}}
      \put(175,0){\line(-1,-2){15}}
      \put(80,90){\line(-4,-3){80}}
      \put(80,90){\line(-1,-3){20}}
      \put(80,90){\line(4,-3){80}}
      \put(80,-90){\line(-4,3){80}}
      \put(80,-90){\line(-1,3){20}}
      \put(80,-90){\line(4,3){80}}
    \end{picture}
  \end{center}
  \caption{The graph $H_k$}
  \label{fig:H_k}
\end{figure}

\begin{lemma}
  \label{H_k}
  The graph $H_k$ ($k \geq 2$) in the proof of Theorem \ref{graph-ineq} 
  satisfies 
  \begin{displaymath}
    \begin{aligned}
      \ind-match (H_k) &= k, \\
      \min-match (H_k) &= \match (H_k) = 2k, \\
      \reg (K[V_k]/I(H_k)) &= k+1. 
    \end{aligned}
  \end{displaymath}
\end{lemma}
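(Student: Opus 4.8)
My plan rests on the decomposition of $V_k$ into the two centres $u,v$ and the $k$ gadgets $W_i=\{x_i,y_i,z_{i1},z_{i2}\}$, each of which induces the $4$-cycle $x_i,z_{i1},y_i,z_{i2}$, together with the remark that every edge of $H_k$ is incident to exactly one gadget (all of $\mathfrak e_i,\mathfrak f_i,\mathfrak g_{il},\mathfrak h_{il}$ meet a single $W_i$, since $u,v\notin W_i$). For the induced matching number, $\{\mathfrak g_{i1}:1\le i\le k\}$ is an induced matching, so $\ind-match (H_k)\ge k$; conversely a $C_4$ contains no two $3$-disjoint edges, so each gadget gives at most one edge of an induced matching $\mathcal M$, while an edge $\mathfrak e_{i_0}\in\mathcal M$ forces every other edge of $\mathcal M$ to avoid all $x_j$ (otherwise $\mathfrak e_j$ $3$-joins them), and $\{y_{i_0},z_{i_0 l}\}$ is $3$-joined to $\mathfrak e_{i_0}$ via $\mathfrak g_{i_0 l}$; a case analysis on whether $\mathcal M$ uses a $u$-edge, a $v$-edge, both, or neither then gives $|\mathcal M|\le k$ in every case. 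For the matching number, $\{\mathfrak g_{i1},\mathfrak h_{i2}:1\le i\le k\}$ is a matching of size $2k$, and there is no perfect matching, because in one each $z_{ij}$ must be matched into $\{x_i,y_i\}$, forcing both $x_i,y_i$ to be used inside $W_i$ and leaving $u$ with no available neighbour; hence $\match (H_k)=2k$. Finally, for any maximal matching $\mathcal N$ and any gadget $i$, either both $z_{i1},z_{i2}$ are matched (two edges meeting $W_i$) or some $z_{ij}$ is unmatched, in which case maximality forces $x_i$ and $y_i$ both to be matched (again two edges meeting $W_i$); as distinct gadgets are met by distinct edges, $|\mathcal N|\ge 2k$, and combined with $\min-match \le\match$ this gives $\min-match (H_k)=\match (H_k)=2k$.

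For the upper bound $\reg (K[V_k]/I(H_k))\le k+1$ I would invoke Lemma \ref{KM:regularity} with the decomposition $I(H_k)=I_0+I_1+\dots+I_k$, where $I_0=(ux_1,\dots,ux_k)$ is the edge ideal of the star at $u$ and $I_i=(x_iz_{i1},x_iz_{i2},y_iz_{i1},y_iz_{i2},vy_i)$ is the edge ideal of the $4$-cycle on $W_i$ together with the pendant edge $vy_i$. The complement of each of these graphs is chordal, so by Fr\"oberg's theorem each ideal has a $2$-linear resolution and $\reg (S/I_j)=1$ for all $j$; summing gives $\reg (K[V_k]/I(H_k))\le k+1$.

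The reverse inequality $\reg \ge k+1$ is the crux. The induced matching bound already gives $\reg \ge k$, but the extra unit cannot be obtained from Lemma \ref{Woodroofe:regularity}: the only induced cycles of $H_k$ are the gadget $4$-cycles (which do not count) and the $8$-cycles through both $u$ and $v$ spanning two gadgets, and since $u,v$ are adjacent to every $x_j,y_j$, at most one such $C_8$ embeds as an induced subgraph and it blocks all remaining gadget edges, so that lemma yields only $k$. Instead I would exhibit a non-vanishing Betti number via Hochster's formula by proving $\widetilde H_k(\Ind(H_k))\ne 0$, where $\Ind$ denotes the independence complex. The plan is to split $\Ind(H_k)$ at the vertex $u$, producing a Mayer--Vietoris sequence relating it to $\Ind(H_k\setminus u)$ and $\Ind(H_k\setminus N[u])$. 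The latter is the independence complex of the tree $T$ with edges $vy_i,y_iz_{i1},y_iz_{i2}$, which is homotopy equivalent to $S^{k-1}$: suspending along the leaf $z_{11}$ of $y_1$ reduces $T$ to a disjoint union of $k-1$ copies of $K_{1,2}$, whose independence complex is a join of $(k-1)$ copies of $S^0$. The heart of the matter is to show that $\Ind(H_k\setminus u)$ is acyclic, which I would do by a second Mayer--Vietoris split at $v$: here $\Ind\big((H_k\setminus u)\setminus v\big)$ (a disjoint union of $k$ copies of $C_4$) and $\Ind\big((H_k\setminus u)\setminus N[v]\big)$ (a disjoint union of $k$ copies of $K_{1,2}$) are both spheres $S^{k-1}$, and the connecting inclusion matches them isomorphically because on each factor it sends the two path-components of $\Ind(K_{1,2})$ bijectively onto the two path-components of $\Ind(C_4)$. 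Granting this, the first sequence collapses to $\widetilde H_k(\Ind(H_k))\cong\widetilde H_{k-1}(\Ind(T))\cong K$, whence Hochster's formula gives $\beta_{i,\,i+k+1}(K[V_k]/I(H_k))\ne 0$ and $\reg \ge k+1$. The main obstacle is exactly this acyclicity statement, i.e.\ verifying that the two $(k-1)$-spheres are identified isomorphically on top homology; the rest is bookkeeping with suspensions and joins.
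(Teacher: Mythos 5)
Your proposal is correct, and its decisive part --- the bound $\reg (K[V_k]/I(H_k)) \geq k+1$ --- takes a genuinely different route from the paper's. The combinatorial computations agree with the paper's in substance; your treatment of the matching numbers is in fact a bit cleaner, since the paper checks case by case that every maximal matching has exactly $2k$ edges, whereas you combine the observation that every edge of $H_k$ meets exactly one gadget $W_i=\{x_i,y_i,z_{i1},z_{i2}\}$ (so maximality forces at least two matching edges per gadget) with the non-existence of a perfect matching. Your regularity upper bound is the paper's decomposition up to the automorphism exchanging $u$ with $v$: $k$ subgraphs of regularity one (a $4$-cycle plus a pendant edge to a center) plus a star, followed by Lemma \ref{KM:regularity}. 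For the lower bound the paper passes to the induced subgraph $H_k'$ on $W_k$ (deleting the vertices $z_{i2}$), fixes a careful ordering of its edges, and exhibits an explicit cycle in a Lyubeznik resolution proving $\beta_{2k+1,3k+2}(K[W_k]/I(H_k')) \neq 0$; you instead compute $\widetilde{H}_{k}(\Ind(H_k);K) \neq 0$ on the full graph by two Mayer--Vietoris splittings. The step you singled out as the main obstacle does go through: the inclusion of $\Ind$ of the $k$ disjoint stars $K_{1,2}$ (on the vertices $x_i,z_{i1},z_{i2}$) into $\Ind$ of the $k$ disjoint $4$-cycles is, on each join factor, a bijection between two contractible components, hence a homotopy equivalence of complexes of the homotopy type of $S^{k-1}$; Mayer--Vietoris then makes $\Ind(H_k\setminus u)$ acyclic, the split at $u$ gives $\widetilde{H}_k(\Ind(H_k)) \cong \widetilde{H}_{k-1}(\Ind(H_k \setminus N[u])) \cong K$, and Hochster's formula applied to the unique vertex subset of size $4k+2$ gives $\beta_{3k+1,\,4k+2} \neq 0$, i.e.\ $\reg \geq k+1$. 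As for what each approach buys: yours explains conceptually where the extra $+1$ beyond the induced matching number comes from (a suspension contributed by the vertex $u$) and avoids all $L$-admissibility bookkeeping, while the paper's argument stays entirely within free resolutions of monomial ideals, needs no simplicial topology beyond Hochster's formula, and detects the nonvanishing Betti number already on the much smaller induced subgraph $H_k'$ and in lower homological degree.
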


In order to prove Lemma \ref{H_k}, 
we use a Lyubeznik resolution (\cite{Lyubeznik}), 
which is a subcomplex of the Taylor resolution. 

\par
Let $I$ be a monomial ideal of $S$ 
and $m_1, \ldots, m_{\mu}$ the minimal monomial generators of $I$. 
The free basis $e_{i_1 \cdots i_s}$ of the Taylor resolution 
is said to be $L$-admissible if 
$\lcm (m_{i_t}, \ldots, m_{i_s})$ is not divisible by $m_q$ 
for all $1 \leq t < s$ and for all $q < i_t$. 
We will denote an $L$-admissible symbol $e_{i_1 \cdots i_s}$ 
by $[m_{i_1}, \ldots, m_{i_s}]$. 
The degree of an $L$-admissible symbol $[m_{i_1}, \ldots, m_{i_s}]$ 
is defined by the degree of $\lcm (m_{i_1}, \ldots, m_{i_s})$. 
(Recall that we consider the standard grading on the polynomial ring $S$.) 
An $L$-admissible symbol $[m_{i_1}, \ldots, m_{i_s}]$ is said to be maximal 
if there is no $L$-admissible symbol $[m_{j_1}, \ldots, m_{j_t}]$ 
such that $\{ i_1, \ldots, i_s \} \subsetneq \{ j_1, \ldots, j_t \}$. 
A Lyubeznik resolution $(\mathcal{L}_{\bullet}, d_{\bullet})$ of $I$ 
(with respect to the above order of the minimal monomial generators) 
is the subcomplex of the Taylor resolution 
generated by all $L$-admissible symbols, which is also a free resolution of 
$S/I$. 


\begin{proof}[Proof of Lemma \ref{H_k}]
  We first compute $\ind-match (H_k)$. 
  Let $\mathcal{M}$ be a maximal induced matching of $H_k$, i.e., 
  $\mathcal{M}$ is an induced matching of $H_k$ and there is no induced matching 
  which properly contains $\mathcal{M}$. 
  Suppose that $\{ u, x_1 \} \in \mathcal{M}$. 
  If $\{ v, y_j \} \in \mathcal{M}$ ($j = 1, 2, \ldots, k$), 
  then each of the rest edges is not 
  $3$-disjoint with at least one of $\{ u, x_1 \}, \{ v, y_j \}$. 
  Hence $\# \mathcal{M} = 2$. 
  If $\{ v, y_j \} \notin \mathcal{M}$ for all $j$, 
  then $\mathcal{M}$ contains exactly one of 
  $\{ y_{\ell}, z_{\ell 1} \}, \{ y_{\ell}, z_{\ell 2} \}$ 
  for each $\ell = 2, 3, \ldots, k$. 
  Therefore $\# \mathcal{M} = 1 + (k-1) = k$. 
  When $\{ u, x_j \}, \{ v, y_j \} \notin \mathcal{M}$ for all $j$, 
  exactly one of 
  $\{ x_j, z_{j1} \}, \{ x_j, z_{j2} \}, 
   \{ y_j, z_{j1} \}, \{ y_j, z_{j2} \}$ belongs to $\mathcal{M}$. 
  Thus $\# \mathcal{M} = k$. Therefore we have $\ind-match (H_k) = k$. 

  \par
  Let $\mathcal{M}'$ be a maximal matching of $H_k$. 
  We show that $\# \mathcal{M}' = 2k$. 
  Suppose that $\{ u, x_1 \} \in \mathcal{M}'$. 
  If $\{ v, y_1 \} \in \mathcal{M}'$, then 
  $\mathcal{M}' \setminus \{ \{ u, x_1 \}, \{ v, y_1 \} \}$ is 
  a maximal matching of $k-1$ copies of the $4$-cycle whose cardinality 
  is $2(k-1)$. Hence $\# \mathcal{M}' = 2k$. 
  If $\{ v, y_j \} \in \mathcal{M}'$ for $j \neq 1$, 
  then $\mathcal{M}' \setminus \{ \{ u, x_1 \}, \{ v, y_j \} \}$ is 
  a maximal matching of the disjoint union of $2$ copies of $P_3$ 
  (the path graph with $3$ vertices) and $k-2$ copies of the $4$-cycle. 
  Hence it follows that $\# \mathcal{M}' = 2+ 2 + 2(k-2) = 2k$. 
  If $\{ v, y_j \} \notin \mathcal{M}'$ for all $j$, 
  then $\mathcal{M}' \setminus \{ \{ u, x_1 \} \}$ is 
  a maximal matching of the disjoint union of $P_3$ and
  $k-1$ copies of the $4$-cycle. 
  Hence $\# \mathcal{M}' = 1 + 1 + 2(k-1) = 2k$. 
  When $\{ u, x_j \}, \{ v, y_j \} \notin \mathcal{M}$ for all $j$, 
  $\mathcal{M}'$ is a maximal matching of $k$ copies of the $4$-cycle. 
  Therefore $\# \mathcal{M}' = 2k$. 

  \par
  Finally, we compute $\reg (K[V_k]/I(H_k))$. 
  We consider the following decomposition of $H_k$: 
  (a) the $4$-cycle with vertices $x_j, z_{j1}, y_j, z_{j2}$ and the edge 
  $\{ x_j, u \}$ ($j=1, 2, \ldots, k$); (b) the star graph on 
  $\{ v, y_1, \ldots, y_k \}$. 
  The edge ideal of each decomposed graph is of regularity $1$. 
  Thus we have $\reg (K[V_k]/I(H_k)) \leq k+1$ by Lemma \ref{KM:regularity}. 

  \par
  For the opposite inequality $\reg (K[V_k]/I(H_k)) \geq k+1$, 
  we consider the induced subgraph of $H_k$ on 
  \begin{displaymath}
    W_k = \{ u, v, \; x_1, x_2, \ldots, x_k, \; y_1, y_2, \ldots, y_k, \; 
       z_{11}, z_{21}, \ldots, z_{k1} \}. 
  \end{displaymath}
  We use the labeling of edges of $H_k$ as in (\ref{eq:label-edgeH_k}). 
  For the sake of simplicity, we use $z_i$ 
  (resp.\  $\mathfrak{g}_{i}$, $\mathfrak{h}_{i}$) 
  instead of $z_{i1}$ 
  (resp.\  $\mathfrak{g}_{i1}$, $\mathfrak{h}_{i1}$) 
  for $i = 1, 2, \ldots, k$ and 
  denote this graph by $H_k'$; 
  see Figure \ref{fig:H_k'}. 
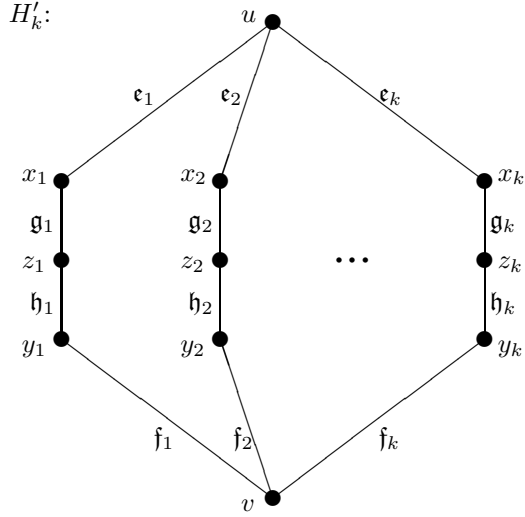
\begin{figure}[htbp]
  \begin{center}
    \begin{picture}(200,230)(-20,-105)
      \put(-20,90){$H_k'$:}
      \put(0,0){\circle*{6}}
      \put(60,0){\circle*{6}}
      \put(105,0){\circle*{2}}
      \put(110,0){\circle*{2}}
      \put(115,0){\circle*{2}}
      \put(160,0){\circle*{6}}
      \put(0,30){\circle*{6}}
      \put(60,30){\circle*{6}}
      \put(160,30){\circle*{6}}
      \put(0,-30){\circle*{6}}
      \put(60,-30){\circle*{6}}
      \put(160,-30){\circle*{6}}
      \put(80,90){\circle*{6}}
      \put(80,-90){\circle*{6}}
      \put(-15,-3){$z_{1}$}
      \put(45,-3){$z_{2}$}
      \put(165,-3){$z_{k}$}
      \put(-15,30){$x_{1}$}
      \put(45,30){$x_{2}$}
      \put(165,30){$x_{k}$}
      \put(-15,-35){$y_{1}$}
      \put(45,-35){$y_{2}$}
      \put(165,-35){$y_{k}$}
      \put(68,90){$u$}
      \put(68,-95){$v$}
      \put(0,0){\line(0,1){30}}
      \put(60,0){\line(0,1){30}}
      \put(160,0){\line(0,1){30}}
      \put(0,0){\line(0,-1){30}}
      \put(60,0){\line(0,-1){30}}
      \put(160,0){\line(0,-1){30}}
      \put(80,90){\line(-4,-3){80}}
      \put(80,90){\line(-1,-3){20}}
      \put(80,90){\line(4,-3){80}}
      \put(80,-90){\line(-4,3){80}}
      \put(80,-90){\line(-1,3){20}}
      \put(80,-90){\line(4,3){80}}
      \put(27,60){$\mathfrak{e}_1$}
      \put(60,60){$\mathfrak{e}_2$}
      \put(120,60){$\mathfrak{e}_k$}
      \put(35,-70){$\mathfrak{f}_1$}
      \put(65,-70){$\mathfrak{f}_2$}
      \put(120,-70){$\mathfrak{f}_k$}
      \put(-12,12){$\mathfrak{g}_1$}
      \put(48,12){$\mathfrak{g}_2$}
      \put(162,12){$\mathfrak{g}_k$}
      \put(-12,-19){$\mathfrak{h}_1$}
      \put(48,-19){$\mathfrak{h}_2$}
      \put(162,-19){$\mathfrak{h}_k$}
    \end{picture}
  \end{center}
  \caption{The graph $H_k'$}
  \label{fig:H_k'}
\end{figure}
%
  By Hochster's formula for Betti numbers (see also \cite[Lemma 3.1]{Kimura}), 
  it is enough to prove that 
  $\beta_{2k+1, 3k+2} (K[W_k]/I(H_k')) \neq 0$. 
  In order to prove this, we use a Lyubeznik resolution. 
  We identify edges of $H_k'$ and minimal monomial generators of $I(H_k')$. 

  \par
  When $k=2$, $H_2'$ is the $8$-cycle. 
  (Hence we know that $\reg K[W_2]/I(H_2') = 3$.) 
  Let us consider the Lyubeznik resolution of $I(H_2')$ with respect to 
  the following order of edges of $H_2'$ 
  (which corresponds to the order of minimal monomial generators 
  of $I(H_2')$): 
  \begin{equation}
    \label{eq:orderH_2'}
    \mathfrak{e}_1, \mathfrak{h}_1, \mathfrak{f}_k, \mathfrak{g}_k, 
    \mathfrak{e}_k, \mathfrak{g}_1, \mathfrak{f}_1, \mathfrak{h}_k. 
  \end{equation}
  We denote the resolution by 
  $(\mathcal{L}_{\bullet}^{(2)}, d_{\bullet}^{(2)})$. 
  Then the maximal $L$-admissible symbols are 
  \begin{displaymath}
    [\mathfrak{e}_1, \mathfrak{h}_1, \mathfrak{f}_k, \mathfrak{g}_k, 
     \mathfrak{e}_k, \mathfrak{f}_1],  
    [\mathfrak{e}_1, \mathfrak{h}_1, \mathfrak{f}_k, \mathfrak{g}_k, 
     \mathfrak{g}_1, \mathfrak{h}_k]. 
  \end{displaymath}
  Put
  \begin{displaymath}
    \xi^{(2)} := [\mathfrak{e}_1, \mathfrak{h}_1, \mathfrak{f}_k, \mathfrak{g}_k, 
     \mathfrak{e}_k] 
     - [\mathfrak{e}_1, \mathfrak{h}_1, \mathfrak{f}_k, \mathfrak{g}_k, 
     \mathfrak{g}_1] \in \mathcal{L}_5^{(2)}. 
  \end{displaymath}
  Then it is easy to see that 
  $1 \otimes \xi^{(2)} 
    \in \Ker (1 \otimes d_{5}^{(2)}) \setminus \Image (1 \otimes d_6^{(2)})$. 
  Also $\deg \xi^{(2)} = 8$. Therefore we have 
  $\beta_{5,8} (K[V_2]/I(H_2')) \neq 0$. 

  \par
  Next assume that $k \geq 3$. 
  For each $i = 2, \ldots, k-1$, consider 
  the induced subgraph of $H_k'$ on $\{ u, x_i, z_i, y_i, v \}$. 
  We denote it by $L_i^{(k)}$. 
  Also let $C_8^{(k)}$ be the induced subgraph of $H_k'$ on 
  $\{ u, x_1, z_1, y_1, v, y_k, z_k, x_k \}$, which is the $8$-cycle. 
  Note that $E(H_k') = \bigcup_{i=2}^{k-1} E(L_i^{(k)}) \cup E(C_8^{(k)})$. 

  \par
  Let us consider the Lyubeznik resolution of $I(L_i^{(k)})$ with respect to 
  the following order of edges of $L_i^{(k)}$: 
  $\mathfrak{g}_i, \mathfrak{h}_i, \mathfrak{f}_i, \mathfrak{e}_i$. 
  We denote the resolution by 
  $(\mathcal{L}_{\bullet}^{(k,i)}, d_{\bullet}^{(k,i)})$. 
  Then the maximal $L$-admissible symbols are 
  $[\mathfrak{g}_i, \mathfrak{f}_i, \mathfrak{e}_i], 
   [\mathfrak{g}_i, \mathfrak{h}_i, \mathfrak{f}_i]$ 
  and it is easy to see that $1 \otimes [\mathfrak{g}_i, \mathfrak{h}_i] 
  \in \Ker (1 \otimes d_2^{(k,i)}) \setminus \Image (1 \otimes d_3^{(k,i)})$. 
  Also $\deg [\mathfrak{g}_i, \mathfrak{h}_i] = 3$. 

  \par
  We also consider the Lyubeznik resolution of $I(C_8^{(k)})$ with respect to 
  the ordering as in (\ref{eq:orderH_2'}). Then the same argument with 
  $H_2'$ is valid. 

  \par
  Now let us consider the Lyubeznik resolution of $I(H_k')$ with respect to 
  the following order of edges of $H_k'$: 
  \begin{equation}
    \label{eq:orderH_k'}
    \begin{aligned}
      &\mathfrak{g}_2, \mathfrak{h}_2, \mathfrak{f}_2, \mathfrak{e}_2, \\
      &\qquad \quad \ldots, \\
      &\qquad \mathfrak{g}_{k-1}, \mathfrak{h}_{k-1}, \mathfrak{f}_{k-1}, 
               \mathfrak{e}_{k-1}, \\
      &\qquad \qquad 
         \mathfrak{e}_1, \mathfrak{h}_1, \mathfrak{f}_k, \mathfrak{g}_k, 
         \mathfrak{e}_k, \mathfrak{g}_1, \mathfrak{f}_1, \mathfrak{h}_k. 
    \end{aligned}
  \end{equation}
  We denote the resolution by 
  $(\mathcal{L}_{\bullet}^{(k)}, d_{\bullet}^{(k)})$. 
  Note that for $i=1, \ldots, k-2$, the $i$th row of (\ref{eq:orderH_k'}) 
  corresponds to $L_{i+1}^{(k)}$ and the last row of (\ref{eq:orderH_k'}) 
  corresponds to $C_8^{(k)}$. 
  These graphs are only connected by the vertices $u,v$. 
  By the definition of the ordering of the minimal monomial generators of 
  $I(H_k')$, it is easy to see that the $L$-admissible symbols of 
  $(\mathcal{L}_{\bullet}^{(k)}, d_{\bullet}^{(k)})$ 
  are obtained by each $L$-admissible symbols 
  for $L_2^{(k)}, \ldots, L_{k-1}^{(k)}$ and $C_8^{(k)}$. 
  A similar claim is true when we consider the maximal $L$-admissible symbols. 
  Put
  \begin{displaymath}
    \begin{aligned}
      \xi^{(k)} 
      := &[\mathfrak{g}_2, \mathfrak{h}_2, 
          \ldots, \mathfrak{g}_{k-1}, \mathfrak{h}_{k-1}, \, 
          \mathfrak{e}_1, \mathfrak{h}_1, \mathfrak{f}_k, \mathfrak{g}_k, 
          \mathfrak{e}_k] \\
       - &[\mathfrak{g}_2, \mathfrak{h}_2, 
           \ldots, \mathfrak{g}_{k-1}, \mathfrak{h}_{k-1}, \, 
           \mathfrak{e}_1, \mathfrak{h}_1, \mathfrak{f}_k, \mathfrak{g}_k, 
           \mathfrak{g}_1]. 
    \end{aligned}
  \end{displaymath}
  Then $\xi^{(k)} \in (\mathcal{L}_{2(k-2)+5}^{(k)})_{3(k-2)+8}$. 
  Also 
  $1 \otimes \xi^{(k)} 
    \in \Ker (1 \otimes d_{2(k-2)+5}^{(k)}) \setminus 
        \Image (1 \otimes d_{2(k-2)+6}^{(k)})$ follows. 
  Therefore we have 
  $\beta_{2k+1,3k+2} (K[V_k]/I(H_k')) \neq 0$ as desired. 
\end{proof}

\begin{question}
  Can we construct an infinite family
  of finite simple connected graphs $G$ 
  satisfying (v)? 
\end{question}

A finite simple connected graph $G$ satisfying the inequalities (v) 
might be rare. 
Actually, when the number of vertices of $G$ is at most $7$, 
there is no such a graph $G$ with (v) except for $C_5$. 
\begin{proposition}
  \label{7vertex}
  Let $G$ be a finite simple connected graph with at most $7$ vertices. 
  Then $\match (G) = \reg S/I(G) > \ind-match (G)$ if and only if 
  $G$ is a $5$-cycle. 
\end{proposition}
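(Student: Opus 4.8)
The plan is to settle the easy (``if'') direction by the computation already recorded in Case (v) of Theorem \ref{graph-ineq}, namely $\ind-match (C_5) = 1 < 2 = \reg S/I(C_5) = \min-match (C_5) = \match (C_5)$. For the converse, assume $G$ is connected on at most $7$ vertices with $\match (G) = \reg S/I(G) > \ind-match (G)$. By the chain (\ref{eq:all-ineq}), the equality $\reg S/I(G) = \match (G)$ already forces $\reg S/I(G) = \min-match (G) = \match (G)$. Since $G$ has an edge we have $\ind-match (G) \ge 1$, so $\match (G) = \reg S/I(G) \ge \ind-match (G) + 1 \ge 2$, while a graph on at most $7$ vertices has $\match (G) \le 3$. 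Hence only $\match (G) \in \{2,3\}$ occur. Moreover $G$ is not chordal: by H\`a--Van Tuyl a chordal graph satisfies $\reg S/I(G) = \ind-match (G)$, contradicting $\reg S/I(G) > \ind-match (G)$.

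First I would treat the case $\match (G) = 2$, where $\reg S/I(G) = 2$ and $\ind-match (G) = 1$. The condition $\ind-match (G) = 1$ says exactly that $G$ has no induced $2K_2$, i.e.\ the complement $\overline{G}$ has no induced $C_4$. By Fr\"oberg's theorem ($\reg S/I(G) = 1$ iff $\overline{G}$ is chordal), the hypothesis $\reg S/I(G) = 2$ forces $\overline{G}$ to be non-chordal, so $\overline{G}$ contains an induced cycle $C_\ell$ with $\ell \ge 4$; the absence of an induced $C_4$ gives $\ell \ge 5$, and $\ell \le 7$ since $|V(G)| \le 7$. Because complementation commutes with passing to induced subgraphs, $G$ contains $\overline{C_\ell}$ as an induced subgraph. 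If $\ell = 5$, then as $C_5$ is self-complementary $G$ has an induced $C_5$ on some vertex set $W$; since $G$ is connected, if $W \ne V(G)$ there is an edge leaving $W$, and that edge together with a matching of size $2$ in the path $C_5 - w$ yields a matching of size $3$, contradicting $\match (G) = 2$. Thus $V(G) = W$ and $G = C_5$. If instead $\ell \in \{6,7\}$, then $\overline{C_\ell}$ itself contains a matching of size $3$ (the prism $\overline{C_6}$ has a perfect matching, and $\overline{C_7}$ contains $\{1,3\},\{2,5\},\{4,6\}$, using that $i,j$ are adjacent in $\overline{C_7}$ whenever $|i-j| \bmod 7 \in \{2,3\}$), so $\match (G) \ge 3$, again a contradiction. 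Hence $\match (G) = 2$ yields only $G = C_5$.

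The remaining case $\match (G) = 3$ is the crux. Here $\reg S/I(G) = \min-match (G) = \match (G) = 3$ and $\ind-match (G) \le 2$, and necessarily $|V(G)| \in \{6,7\}$. I would prove that no such graph exists by showing the sharper statement that every connected graph on $6$ or $7$ vertices with $\ind-match (G) \le 2$ satisfies $\reg S/I(G) \le 2$, which directly contradicts $\reg S/I(G) = 3$. The intended mechanism is to cover the edge set $E(G)$ by two co-chordal subgraphs $G_1, G_2$ (each with $\reg S/I(G_i) = 1$) and to invoke Lemma \ref{KM:regularity} to conclude $\reg S/I(G) \le \reg S/I(G_1) + \reg S/I(G_2) = 2$. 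To make the search finite and structured one uses $\min-match (G) = \match (G) = 3$: the graph is equimatchable, so on $6$ vertices every maximal matching is a perfect matching, and on $7$ vertices each maximal matching covers all but one vertex; this severely restricts the neighborhoods of the (perfectly matched, resp.\ uncovered) vertices and hence the list of candidate graphs.

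The main obstacle is exactly this last case: exhibiting the two co-chordal covers uniformly, or otherwise certifying $\reg S/I(G) \le 2$, for every connected equimatchable graph on $6$ and $7$ vertices with $\match (G) = 3$ and $\ind-match (G) \le 2$. I expect the clean conceptual point to be that, with only $6$ or $7$ vertices available, the regularity contribution of an induced $C_5$ cannot be ``doubled'' while keeping $\ind-match (G) \le 2$, so that Lemma \ref{Woodroofe:regularity} can never force $\reg S/I(G)$ up to $3$ and a co-chordal cover of size $2$ always exists; turning this heuristic into a complete argument (by a reduction on the possible neighborhoods of the distinguished vertices, or by a direct verification over the finitely many candidate graphs) is the hard part.
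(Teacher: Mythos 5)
Your ``if'' direction, the reduction to $\match (G) \in \{2,3\}$ via (\ref{eq:all-ineq}), the exclusion of chordal graphs by H\`{a}--Van Tuyl, and your entire treatment of the case $\match (G) = 2$ are correct and complete: Fr\"{o}berg's theorem forces an induced cycle of length at least $4$ in $\overline{G}$, the hypothesis $\ind-match (G) = 1$ (no induced $2K_2$ in $G$, equivalently no induced $C_4$ in $\overline{G}$) pushes its length to at least $5$, and your explicit size-$3$ matchings inside $\overline{C_6}$ (the prism) and $\overline{C_7}$, together with the connectivity argument attached to an induced $C_5$, leave only $G \cong C_5$. For context: the paper states Proposition \ref{7vertex} with no proof at all (it rests on inspecting the finitely many connected graphs on at most $7$ vertices, and the remark following it cites Biyiko\u{g}lu--Civan for the general statement), so for this half you have supplied an argument the paper does not contain.

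The case $\match (G) = 3$, however, is a genuine gap, and not only because you leave it as an admitted heuristic: the mechanism you designate as the intended one provably cannot work for all candidates. The cycle $C_7$ is connected with $\min-match (C_7) = \match (C_7) = 3$ and $\ind-match (C_7) = 2$, so it lies in the family you must handle; but every subgraph of $C_7$ is a disjoint union of paths, and any such subgraph with at least $4$ edges contains an induced $2K_2$ and hence is not co-chordal, so covering the $7$ edges of $C_7$ requires at least $\lceil 7/3 \rceil = 3$ co-chordal subgraphs. Thus Lemma \ref{KM:regularity} with two co-chordal pieces can never certify $\reg S/I(C_7) \leq 2$ (which is nevertheless true), and a different certificate is needed there, e.g.\ the bound $\reg S/I(G) \leq \max\{\reg S/I(G \setminus x), \, \reg S/I(G \setminus N[x]) + 1\}$ or the known regularity formula for cycles. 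The $6$-vertex subcase does close cleanly: $\min-match (G) = \match (G) = 3$ on $6$ vertices means every maximal matching is perfect, so $G$ is randomly matchable, Sumner's theorem gives $G \cong K_6$ or $G \cong K_{3,3}$, and both have chordal complements, hence regularity $1 \neq 3$. But for $7$ vertices you would still have to enumerate the connected equimatchable graphs with $\match (G) = 3$ and $\ind-match (G) \leq 2$ and certify regularity at most $2$ for each non-chordal one; until that is carried out, the proposition is not proved.
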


\begin{remark}
  After submitting the paper, Biyiko\u{g}lu and Civan 
  \cite[Theorem 3.17]{BC1503} proved that there is no finite simple 
  connected graph $G$ satisfying (v) except for $C_5$. 
\end{remark}

\section{A graph with a dominating induced matching}
\label{sec:DIM}
In \cite{HHKO}, the authors studied the Cameron--Walker graphs. 
In this section, we treat some classes of graphs which contain 
Cameron--Walker graphs as a subclass and investigate these combinatorial 
properties. 

\par
We first recall some definitions on graphs. 

\par
Let $G$ be a finite simple graph on the vertex set $V$. 
Let $W$ be a subset of $V$. We denote by $G_W$ the induced subgraph of $G$ 
on $W$: the vertex set of $G_W$ is $W$ and the edge set of $G_W$ consists of 
all edges of $G$ which are contained in $W$. 
We write $G \setminus W$ instead of $G_{V \setminus W}$. 
In particular, when $W = \{ x \}$, consisting of $1$ vertex, 
we write $G \setminus x$ instead of $G \setminus \{ x \}$. 
For a vertex $x \in V$, we denote by $N_G (x)$ the set of neighbours of $x$. 
Also we set $N_G [x] := N_G (x) \cup \{ x \}$. 
The degree of $x$ is defined by $\deg_G (x) := \# N_G (x)$. 
For a subset $W \subset V$, we set 
$N_G (W) = \bigcup_{x \in W} N_G (x)$ and 
$N_G [W] = \bigcup_{x \in W} N_G [x]$. 
We sometimes omit the lower subscript $G$ on these notation 
if there is no fear of confusion. 

\par
A subset $W \subset V$ is called \textit{independent} if 
no two vertices of $W$ are adjacent in $G$. 
An independent set $W$ is said to be maximal 
if there is no independent set of $G$ which properly contains $W$. 
Also a subset $C \subset V$ is called a \textit{vertex cover} of $G$ 
if all edges of $G$ meet with $C$. 
A vertex cover $C$ is said to be minimal if there is no vertex cover of $G$ 
which is properly contained in $C$. 
Note that $C$ is a minimal vertex cover of $G$ if and only if 
$V \setminus C$ is a maximal independent set of $G$. 
A graph is said to be unmixed if all minimal vertex covers 
(equivalently, all maximal independent sets) of $G$ have the 
same cardinality. When $G$ is unmixed, the edge ideal $I(G)$ is height unmixed. 

\par

An edge of $G$ is called a \textit{leaf edge} 
if it contains a degree $1$ vertex. 
Also a triangle of $G$ is called a \textit{pendant triangle} if its two 
vertices are of degree $2$ and the rest vertex is of degree more than $2$.

\par
\bigskip

\par
A Cameron--Walker graph $G$ satisfies the equalities (i) 
in the previous section: 
\begin{equation}
  \label{eq:CWgraph}
  \ind-match (G) = \reg S/I(G) = \min-match (G) = \match (G). 
\end{equation}
Recall that 
a Cameron--Walker graph $G$ consists of a connected bipartite graph 
with the vertex partition $X \sqcup Y$ such that there is 
at least one leaf edge attached to each vertex $x_i \in X$ 
and that there may be possibly some pendant triangles attached to a 
vertex $y_j \in Y$. 
Choose one leaf edge which contains $x_i$ for each $x_i \in X$. 
Then these edges and the edges consisting of two degree $2$ vertices 
of all pendant triangles form an induced matching of $G$. 
It also forms a maximal matching of $G$. 
Thus for a Cameron--Walker graph $G$, there exists an induced matching of $G$ 
which is also a maximal matching of $G$. 
Such a matching is called a \textit{dominating induced matching} or 
an \textit{efficient edge domination set}. 
There exists a graph which does not have a dominating induced matching. 
For example, let $G_0$ be the graph on the vertex set $\{ 1,2, \ldots, 6 \}$ 
with edges $\{ 1,2 \}, \{ 2,3 \}, \{ 3,4 \}, \{ 4,5 \}, \{ 3,6 \}$. 
Then it is easy to see that an induced matching consisting of 
one edge is not a maximal matching of $G_0$. 
Other induced matching of $G_0$ is only $\{ \{ 1,2 \}, \{ 4,5 \} \}$, 
which is also not a maximal matching; 
$\{ \{ 1,2 \}, \{ 4,5 \}, \{ 3,6 \} \}$ is a matching of $G_0$. 
On graph theory, it has been studied the problem 
of determining whether a given finite simple graph 
has a dominating induced matching. 
This problem is known to be NP-complete in general; 
see e.g., \cite{CMMR, LMS}. 

\par
A graph with a dominating induced matching is characterized as follows. 
It is easy to check but we give a proof of this for the completeness. 
\begin{proposition}[{cf. \cite[p.2]{LMS}}]
   \label{ind-match=max-match}
  Let $G$ be a finite simple graph on $V$. 
  Then $G$ has a dominating induced matching 
  if and only if there is an independent set $W$ 
  such that $G \setminus W$ is a disjoint union of edges. 
  When this is the case, the set of edges of $G \setminus W$ forms 
  a dominating induced matching of $G$. 
\end{proposition}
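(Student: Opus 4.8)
The plan is to prove both implications directly from the definitions, using the obvious candidate in each case. Write $V(\mathcal{M}) := \bigcup_{e \in \mathcal{M}} e$ for the set of vertices covered by a matching $\mathcal{M}$. For the forward implication, suppose $\mathcal{M}$ is a dominating induced matching of $G$ and put $W := V \setminus V(\mathcal{M})$. I would first check that $W$ is independent: an edge $f$ with both endpoints in $W$ would be disjoint from every edge of $\mathcal{M}$, so $\mathcal{M} \cup \{ f \}$ would be a strictly larger matching, contradicting the maximality of $\mathcal{M}$. I would then check that $G \setminus W = G_{V(\mathcal{M})}$ has no edges beyond those of $\mathcal{M}$: if $f \in E(G_{V(\mathcal{M})}) \setminus \mathcal{M}$, its two endpoints lie in $V(\mathcal{M})$ but not in a common edge of $\mathcal{M}$ (otherwise $f$ would equal that edge), so $f$ meets two distinct edges $e, e' \in \mathcal{M}$, violating $3$-disjointness. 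Since the edges of the matching $\mathcal{M}$ are pairwise vertex-disjoint, $G \setminus W$ is then a disjoint union of edges, namely $\mathcal{M}$ itself; this also yields the final clause of the statement.

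For the converse, assume $W$ is independent and that $G \setminus W$ is a disjoint union of edges, and let $\mathcal{M}$ be its edge set. I would verify the three properties of a dominating induced matching. It is a matching since the components of $G \setminus W$ are single edges, hence vertex-disjoint. For the induced property, given distinct $e, e' \in \mathcal{M}$ and an edge $f \in E(G)$ meeting both, I note that the two endpoints of $f$ lie in $V(\mathcal{M}) \subseteq V \setminus W$, so $f$ is itself an edge of $G \setminus W$, i.e.\ $f \in \mathcal{M}$; but then $f$ meets $e$ and so equals it (two matching edges sharing a vertex coincide), and likewise $f = e'$, forcing $e = e'$, a contradiction. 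Hence the edges of $\mathcal{M}$ are pairwise $3$-disjoint. For maximality, any edge $g$ that could be adjoined to $\mathcal{M}$ would have to be disjoint from $V(\mathcal{M}) = V \setminus W$, hence have both endpoints in $W$, which is impossible since $W$ is independent.

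Since the whole argument is a direct unwinding of the definitions, I do not expect a genuine obstacle. The two points requiring care are reading ``disjoint union of edges'' as a $1$-regular graph, so that $V(\mathcal{M})$ equals $V \setminus W$ exactly (which is precisely what makes the maximality step go through), and the repeated use of the elementary fact that two edges of a matching sharing a vertex must be equal, which powers both the induced-matching check and the identification of the edge set of $G \setminus W$ with $\mathcal{M}$.
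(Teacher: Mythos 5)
Your proof is correct and follows the same approach as the paper: take $W$ to be the set of vertices not covered by the dominating induced matching (respectively, take the edge set of $G \setminus W$ as the candidate matching). The paper's own proof is only a one-line sketch of the forward direction (independence of $W$ from maximality of $\mathcal{M}$), whereas you also verify, correctly, that $G\setminus W$ has no edges beyond $\mathcal{M}$ (using the induced property) and carry out the converse in full; these are exactly the details the paper leaves to the reader.
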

\begin{proof}
  Let $\mathcal{M} = \{ e_1, e_2, \ldots, e_s \}$ be 
  a dominating induced matching of $G$. 
  Let $W$ be the set of vertices which do not appear in each $e_i$. 
  Then $W$ is an independent set of $G$ since $\mathcal{M}$ is 
  a maximal matching. 
\end{proof}
For example, the path graph $P_{6n}$ with $6n$ vertices and the $6n$-cycle 
$C_{6n}$ have dominating induced matchings; 
for instance, take $W = \{ 3, 6, \ldots, 6n \}$. 

\par
Let $G$ be a finite simple graph. 
Since $\ind-match (G) \leq \min-match (G)$ holds in general, 
if $G$ has a dominating induced matching, then
$\ind-match (G) = \min-match (G)$ holds. 
\begin{remark}
  \label{rmk:ind-match<min-match}
  For a finite simple graph $G$, 
  the inequality $\ind-match (G) \leq \min-match (G)$ follows via 
  $\reg S/I(G)$; see (\ref{eq:all-ineq}). 

  \par
  We prove this inequality by pure combinatorics. 
  Let $\mathcal{M} = \{ e_1, \ldots, e_s \}$ be an induced matching of $G$
  and $\mathcal{M}' = \{ e_1', \ldots, e_t' \}$ a maximal matching of $G$. 
  Then for each $e_k \in \mathcal{M}$, 
  there exists $e_{i_k}' \in \mathcal{M}'$ with 
  $e_{i_k}' \cap e_{k} \neq \emptyset$ 
  because of the maximality of $\mathcal{M}'$ . 
  Since $\mathcal{M}$ is an induced matching, it follows that $i_k \neq i_j$ 
  if $k \neq j$. 
  Therefore we have $\ind-match (G) \leq \min-match (G)$. 
\end{remark}

\par
We next characterize a finite simple graph $G$ 
with $\ind-match (G) = \min-match (G)$. 
\begin{theorem}
  \label{ind-match=min-match}
  Let $G$ be a finite simple graph on $V$. 
  Then $G$ satisfies $\ind-match (G) = \min-match (G)$ if and only if 
  the vertex set $V$ can be partitioned as 
  \begin{displaymath}
      V = \{ v_{i1}, v_{i2} \; : \; i = 1, 2, \ldots, \alpha + \beta \} 
        \sqcup \{ z_1, \ldots, z_{\alpha} \} 
        \sqcup \{ w_1, \ldots, w_{\gamma} \}, 
  \end{displaymath}
  where $\alpha, \beta, \gamma$ are non-negative integers, 
  so that the edge set of $G$ is of the following form 
  \begin{displaymath}
      \{ e_i \; : \; i = 1, 2, \ldots, \alpha + \beta \} \\
      \cup \{ e_i' \; : \; i = 1, 2, \ldots, \alpha \} \cup E', 
  \end{displaymath}
  where we set 
  $e_i = \{ v_{i1}, v_{i2} \}$ ($i = 1, 2, \ldots, \alpha + \beta$) and 
  $e_i' = \{ v_{i1}, z_i \}$ ($i = 1, 2, \ldots, \alpha$), 
  and an edge in $E'$ is one of the following: 
  \begin{enumerate}
  \item[(i)] an edge containing $z_i$ ($i = 1, 2, \ldots, \alpha$); 
  \item[(ii)] an edge consisting of an end vertex of $e_i$ 
    ($i = \alpha + 1, \alpha + 2, \ldots, \alpha + \beta$) 
    and $w_j$ ($j = 1, 2, \ldots, \gamma$); 
  \item[(iii)] an edge consisting of $v_{i1}$ ($i = 1, 2, \ldots, \alpha$) 
    and $w_j$ ($j = 1, 2, \ldots, \gamma$). 
  \end{enumerate}
\end{theorem}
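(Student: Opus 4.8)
The statement is an equivalence whose two directions have quite different character. The forward implication (that a graph of the displayed shape satisfies the equality) is a direct verification, whereas the converse---producing the partition out of the numerical equality---carries the real content. The plan is to handle $\ind-match(G)=\min-match(G)$ by comparing a \emph{maximum} induced matching with a \emph{minimum} maximal matching and extracting a canonical bijection between them, from which the partition can be read off. Throughout I would use the general inequality $\ind-match(G)\le\min-match(G)$ from \eqref{eq:all-ineq}.

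For the ``if'' direction, suppose $G$ has the displayed form and set $q:=\alpha+\beta$. First I would check that $\{e_1,\dots,e_{\alpha+\beta}\}$ is an induced matching: inspecting the three admissible kinds of edges in $E'$ (together with the $e_i$ and $e_i'$), one sees that no single edge joins a vertex of $e_i$ to a vertex of $e_j$ for $i\neq j$, since every non-core edge has an endpoint among the $z$'s or $w$'s. Hence $\ind-match(G)\ge\alpha+\beta$. Next I would exhibit the matching $\mathcal N_0=\{e_i':i\le\alpha\}\cup\{e_i:i>\alpha\}$ of size $\alpha+\beta$ and verify it is \emph{maximal}: its uncovered vertices are exactly $\{v_{i2}:i\le\alpha\}\cup\{w_1,\dots,w_\gamma\}$, and the admissible edge types forbid any edge inside this set (type (ii) needs $i>\alpha$, type (iii) attaches only at $v_{i1}$, and no edge joins two of the $w_j$). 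Thus $\min-match(G)\le\alpha+\beta$, and the chain $\ind-match(G)\le\min-match(G)$ squeezes both quantities to $\alpha+\beta$.

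For the ``only if'' direction, put $q=\ind-match(G)=\min-match(G)$, fix a maximum induced matching $\mathcal M=\{e_1,\dots,e_q\}$ with $e_k=\{a_k,b_k\}$, and a minimum maximal matching $\mathcal N=\{g_1,\dots,g_q\}$. The key step is a double count in the bipartite ``meeting'' graph on $\mathcal M\sqcup\mathcal N$, where $e\sim g$ iff $e\cap g\neq\emptyset$: maximality of $\mathcal N$ forces each $e_k$ to have degree $\ge1$, while the $3$-disjointness of $\mathcal M$ prevents any $g$ from meeting two distinct $e_k$'s (such a $g$ would be an edge joining two members of $\mathcal M$), so each $g$ has degree $\le1$; since $|\mathcal M|=|\mathcal N|=q$, this yields a perfect matching $e_k\leftrightarrow g_k$. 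I would then split indices into $A=\{k:g_k=e_k\}$ and $B=\{k:g_k\neq e_k\}$. For $k\in B$ the induced property shows $g_k=\{a_k,d_k\}$ with $d_k$ lying outside $\bigcup e_j$, and the bijection shows the other endpoint $b_k$ is left uncovered by $\mathcal N$. Setting $v_{i1}=a_i$, $v_{i2}=b_i$, and $z_i=d_i$ for $i\in B$ (reindexed first, so $\alpha=|B|$, $\beta=|A|$), and letting $w_1,\dots,w_\gamma$ be the remaining vertices outside the core, produces the desired partition; the $z_i$ are distinct because $\mathcal N$ is a matching.

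It then remains to confirm that every edge falls into an allowed slot, and this is exactly where the two hypotheses combine. Core--core edges are only the $e_i$ by the induced property; an edge from $a_i$ or $b_i$ to an outside vertex is type (i) when that vertex is some $z_k$, type (ii) when $i>\alpha$ and the vertex is a $w_j$, and type (iii) when $i\le\alpha$ and the endpoint is $v_{i1}=a_i$; and any outside--outside edge touching a $z_k$ is type (i). The two potentially illegal edges, $\{v_{i2},w_j\}$ with $i\le\alpha$ and $\{w_j,w_\ell\}$, are precisely edges between two $\mathcal N$-uncovered vertices, hence excluded by maximality of $\mathcal N$. The main obstacle is the meeting-graph double count: obtaining the clean bijection $e_k\leftrightarrow g_k$ (rather than a mere injection) is what pins down the role of each vertex and makes the final edge classification forced; once that bijection is in hand, the rest is bookkeeping.
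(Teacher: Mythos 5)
Your proposal is correct and follows essentially the same route as the paper: the same induced matching $\{e_i\}$ and maximal matching $\{e_i' : i\le\alpha\}\cup\{e_i : i>\alpha\}$ in the ``if'' direction, and in the ``only if'' direction the same pairing of a maximum induced matching with a minimum maximal matching (your meeting-graph double count is just a rephrasing of the paper's injection-plus-cardinality argument from Remark \ref{rmk:ind-match<min-match}), followed by setting $z_k = e_k'\setminus e_k$ and taking $W$ to be the leftover vertices. The only difference is that you spell out the final edge classification, which the paper dismisses as ``easy to see.''
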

\begin{proof}
  {\bf (``If'')} 
  It is easy to see that $e_1, \ldots, e_{\alpha + \beta}$ 
  form an induced matching of $G$. 
  Hence we have $\alpha + \beta \leq \ind-match (G)$. 
  On the other hand, 
  $e_1', \ldots, e_{\alpha}', e_{\alpha + 1}, \ldots, e_{\alpha + \beta}$ 
  form a maximal matching of $G$ since the rest vertices are 
  $v_{12}, \ldots, v_{\alpha 2}, w_1, \ldots, w_{\gamma}$. 
  Hence we have $\min-match (G) \leq \alpha + \beta$. 
  By combining these inequalities, 
  we have $\ind-match (G) = \min-match (G) = \alpha + \beta$. 

  \par
  {\bf (``Only If'')} 
  Put $s = \ind-match (G) = \min-match (G)$. 
  Let $\mathcal{M} = \{ e_1, \ldots, e_s \}$ 
  be an induced matching of $G$ with cardinality $s$ 
  and $\mathcal{M}' = \{ e_1', \ldots, e_s' \}$ 
  a maximal matching of $G$ with cardinality $s$. 
  As noted in Remark \ref{rmk:ind-match<min-match}, 
  for each $e_k \in \mathcal{M}$, 
  there exists an edge $e_{i_k}' \in \mathcal{M}'$ with 
  $e_{i_k}' \cap e_k \neq \emptyset$, and $i_k \neq i_j$ if $k \neq j$. 
  Also since $\# \mathcal{M} = s = \ind-match (G)$, 
  $i_k$ is uniquely determined by $k$. Therefore we may assume that 
  $e_k' \neq e_k$ for $k = 1, \ldots, \alpha$ ($0 \leq \alpha \leq s$) and 
  $e_k' = e_k$ for $k = \alpha + 1, \ldots, s$. 
  Set $z_k = e_k' \setminus e_k$ ($k = 1, \ldots, \alpha$) and 
  $W = V \setminus (\bigcup_{k=1}^s e_k \cup \{ z_1, \ldots, z_{\alpha} \})$. 
  Then it is easy to see that 
  \begin{displaymath}
    V = \left( \bigcup_{k=1}^s e_k \right) 
          \sqcup \{ z_1, \ldots, z_{\alpha} \} \sqcup W 
  \end{displaymath}
  is a desired partition. 
\end{proof}

\par
As mentioned before Remark \ref{rmk:ind-match<min-match}, 
if $G$ has a dominating induced matching, 
then $\ind-match (G) = \min-match (G)$ holds. 
But the converse is false; 
the graph $G_0$ (see the beginning of this section) 
does not have a dominating induced matching, 
but $\ind-match (G_0) = \min-match (G_0) = 2$; 
for example, $\{ \{ 2, 3 \}, \{ 4, 5 \} \}$ is a maximal matching with 
cardinality $2$. 

\par
\bigskip

\par
Now we return to the graph with a dominating induced matching. 
We consider the problem which graph with a dominating induced matching 
is unmixed. 
Let $G$ be a finite simple graph on the vertex set $V$ 
with a dominating induced matching. 
Then $V$ can be decomposed as $W \sqcup M$ where 
$W$ is an independent set of $G$ and 
$G_M$ consists of $m := \min-match (G)$ disconnected edges 
$\{ x_{j1}, x_{j2} \}$, $j=1, \ldots, m$. 
Set 
\begin{displaymath}
  W_0 := \{ w \in W \; : \; \text{$w$ is not an isolated vertex of $G$} \}. 
\end{displaymath}
Also set 
\begin{displaymath}
  \begin{aligned}
    m_1 &:= \# \{ j \; : \; 
      \text{$\deg_G x_{j1} = 1$ or $\deg_G x_{j2} = 1$} \}, \\
    m_2 &:= \# \{ j \; : \; 
      \text{$\deg_G x_{j1} \geq 2$ and $\deg_G x_{j2} \geq 2$} \}. 
  \end{aligned}
\end{displaymath}
Note that $m_1 + m_2 = m$. 
For a subset $U \subset V$, we 
denote by $IN_G (U, W)$ (or $IN (U, W)$ if there is no fear of confusion), 
the set of isolated vertices of $G \setminus N_G [U]$ 
which are contained in $W_0$. 

\par
Let $M_2$ be a subset of $M$ satisfying the following $2$ conditions: 
\begin{enumerate}
\item[$(\ast 1)$] $\# (M_2 \cap \{ x_{j1}, x_{j2} \}) \leq 1$ 
  for all $j = 1, \ldots, m$; 
\item[$(\ast 2)$] $\deg_G x \geq 2$ for all $x \in M_2$, 
\end{enumerate}
Then $G' := G \setminus N_G [M_2]$ has a dominating induced matching 
if $G'$ is not an edgeless graph. 
Indeed $V' := V(G') = V \setminus N_G [M_2]$, 
the vertex set of $G'$, can be decomposed as 
$W' \sqcup M'$, where 
$W' = W \setminus N_G [M_2]$ and $M' = M \setminus N_G [M_2]$. 
We use notation $m_1', m_2', W_0'$ for $G'$ with respect to 
this decomposition of $V'$ 
with a similar meaning to $G$. 
If $G'$ is an edgeless graph, 
then we set $m_1' = m_2' = 0$ and $W_0' = \emptyset$. 
\begin{theorem}
  \label{claim:unmixedDIM}
  We use the same notation as above. 
  Let $G$ be a finite simple graph on $V$ with a dominating induced matching. 
  Then $G$ is unmixed if and only if for some (and then all) 
  decomposition $V = W \sqcup M$, 
  the following condition $(\flat)$ is satisfied: 
  \begin{description}
    \item[Condition $(\flat)$] 
    For each subset $M_2 \subset M$ with the properties $(\ast 1)$, $(\ast 2)$, 
    the following $2$ conditions are satisfied: 
   \begin{displaymath}
     \begin{aligned}
       (\flat 1) &\  m_2 - m_2' = \# N_G (M_2) - \# M_2. \\
       (\flat 2) &\  
         \# W_0 \leq 2 m_2 - \# N_G (M_2) 
         + \# M_2 + \# IN_G (M_2, W). 
     \end{aligned}
   \end{displaymath}
  \end{description}
\end{theorem}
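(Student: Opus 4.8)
The plan is to translate everything into the language of maximal independent sets. Since $C$ is a minimal vertex cover precisely when $V\setminus C$ is a maximal independent set, $G$ is unmixed if and only if all maximal independent sets of $G$ have the same cardinality. Fix a decomposition $V=W\sqcup M$ as in the hypothesis, write $M_{\ge 2}$ for the set of vertices of $M$ of degree $\ge 2$ (so the $m_2$ edges are exactly those with both endpoints in $M_{\ge 2}$, and I write $M_{m_2}$ for the set of their endpoints). For a maximal independent set $I$ I would first record two elementary structural facts. First, each leaf edge contributes exactly one vertex to $I$: if the degree-one endpoint's neighbour is excluded, maximality forces the degree-one endpoint in, and otherwise it is excluded. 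Second, setting $M_2:=I\cap M_{\ge 2}$ (which satisfies $(\ast 1)$ and $(\ast 2)$ automatically), one has
\[
  W_0\setminus I \;=\; W_0\cap N_G(M_2),
\]
because a $W_0$-vertex outside $I$ can only be dominated through $M_{\ge 2}\cap I=M_2$, and conversely every neighbour of $M_2$ in $W_0$ is excluded from $I$.

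Combining these gives the size formula
\[
  \# I \;=\; \#W + m_1 + \#(M_2\cap M_{m_2}) - \#\big(W_0\cap N_G(M_2)\big),
\]
and I would pair it with the bookkeeping identity $\#N_G(M_2)-\#M_2=\#(W_0\cap N_G(M_2))$, valid because the $M$-neighbours of $M_2$ are exactly its $\#M_2$ distinct matching partners. Taking $M_2=\emptyset$ produces the base maximal independent set $I_0=W\cup\{\text{degree-one endpoints of leaf edges}\}$, of cardinality $\#W+m_1$; hence \emph{unmixedness is equivalent to every maximal independent set having cardinality $\#W+m_1$}, i.e. to $\#(M_2\cap M_{m_2})=\#(W_0\cap N_G(M_2))$ for the set $M_2$ attached to each maximal independent set. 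Using the identity above, Condition $(\flat)$ rewrites as $m_2-m_2'=\#(W_0\cap N_G(M_2))$ for $(\flat 1)$ and as $\#W_0\le 2m_2-\#(W_0\cap N_G(M_2))+\#IN_G(M_2,W)$ for $(\flat 2)$; the plan is to show these two conditions encode the two one-sided bounds $\alpha(G)\le \#W+m_1$ and $i(G)\ge \#W+m_1$ whose conjunction is the displayed equality.

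To connect an arbitrary admissible $M_2$ to an actual maximal independent set I would exploit the fact, stated just before the theorem, that $G':=G\setminus N_G[M_2]$ again carries a dominating induced matching with decomposition $W'\sqcup M'$. Here $m_2-m_2'$ counts the $m_2$-edges that are either destroyed (an endpoint lies in $N_G[M_2]$) or \emph{demoted} (an endpoint loses all its $W_0$-neighbours and drops to degree $1$ in $G'$), while $\#IN_G(M_2,W)$ counts the $W_0$-vertices that become isolated once $N_G[M_2]$ is deleted and are therefore forced into $I$. The destroyed edges number exactly $\#(M_2\cap M_{m_2})$, so $(\flat 1)$ says precisely that the number of demoted edges equals $\#(W_0\cap N_G(M_2))-\#(M_2\cap M_{m_2})$, i.e. the upper-bound direction holds with equality; and the lower-bound direction is obtained by completing $M_2$ to the smallest possible maximal independent set inside $G'$ (taking forced vertices $IN_G(M_2,W)$ together with degree-one endpoints), whose cardinality is governed exactly by $(\flat 2)$.

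The step I expect to be the main obstacle is the quantification over \emph{all} admissible $M_2$. The size formula only constrains those $M_2$ that are \emph{realizable}, i.e. equal to $I\cap M_{\ge 2}$ for some maximal independent set $I$; these are exactly the ones for which no surviving $m_2$-edge of $G'$ is demoted with both endpoints blocked, so that no extra $M_{\ge 2}$-vertex is forced into $I$. For the ``if'' direction only the realizable cases are needed, so it should be comparatively direct. The ``only if'' direction is harder, since unmixedness must be shown to imply $(\flat 1)$ and $(\flat 2)$ even for non-realizable $M_2$. I would treat this by induction on $\#M$ (equivalently on the number of edges) through the dominating induced matching on $G'$: realize the data of a non-realizable $M_2$ by first adjoining the forced endpoints of its demoted edges, apply the inductive hypothesis (unmixedness descends to $G'$, whose invariants are $m_1',m_2',\#W_0'$) and transport the resulting equality and inequality back via $\#N_G(M_2)-\#M_2=\#(W_0\cap N_G(M_2))$. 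A final bookkeeping point is the ``for some (and then all) decomposition'' clause, which I would settle by observing that every quantity used ($m_1,m_2,\#W_0$, together with the counts $m_2'$ and $\#IN_G(M_2,W)$) is expressible through $G$ once a dominating induced matching is fixed, and then checking its independence of that choice.
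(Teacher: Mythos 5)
Your translation into maximal independent sets is sound, and the bookkeeping facts you state are all correct: for a maximal independent set $I$ with $M_2 = I \cap M_{\ge 2}$ one has $\# I = \# W + m_1 + \#(M_2 \cap M_{m_2}) - \#\bigl(W_0 \cap N_G(M_2)\bigr)$, the identity $\# N_G(M_2) - \# M_2 = \#\bigl(W_0 \cap N_G(M_2)\bigr)$ holds for every admissible $M_2$, and $m_2 - m_2'$ splits as (destroyed) $+$ (demoted) with destroyed $= \#(M_2 \cap M_{m_2})$. This is a genuinely different framing from the paper, which works with minimal vertex covers and puts its induction in the ``if'' direction; in your language the ``if'' direction needs no induction at all, because maximality of $I$ forbids demoted edges outright: if a surviving $m_2$-edge had an endpoint $x_{j1} \notin I$ all of whose $W$-neighbours lie in $N_G(M_2)$, then $x_{j1}$ would have no neighbour in $I$, contradicting maximality. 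Hence $(\flat 1)$ alone forces $\# I = \# W + m_1$ for every maximal independent set, which is unmixedness. Note that your characterization of realizability is wrong on exactly this point: one blocked endpoint already obstructs it, not ``both endpoints blocked,'' and this slip is what leads you to route a ``lower bound'' through $(\flat 2)$ and smallest maximal independent sets --- a detour that is both unnecessary and not actually justified in your sketch.

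The genuine gap is in the ``only if'' direction. Your induction on $\# M$ cannot get off the ground for the one instance that matters most: $(\flat 2)$ with $M_2 = \emptyset$ reads $\# W_0 \le 2 m_2$, and there $G' = G \setminus N_G[\emptyset] = G$, so there is no smaller graph to which an inductive hypothesis could apply, and ``adjoining forced endpoints of demoted edges'' adjoins nothing. This inequality is not a formality; it is precisely Lemma \ref{claim:GV} of the paper, whose proof invokes the Gitler--Valencia bound $2 \height I(G) \ge \# W_0 + \# M$ for unmixed graphs \cite{GV}, an external input your proposal neither cites nor replaces. (It can be replaced combinatorially --- for instance, unmixedness forces every transversal $T$ choosing one endpoint from each $m_2$-edge to satisfy $\#\bigl(W_0 \cap N_G(T)\bigr) = m_2$, and averaging over all $2^{m_2}$ transversals, after first checking that every vertex of $W_0$ has a neighbour on an $m_2$-edge, gives $\# W_0 \le 2 m_2$ --- but no such argument appears in your sketch.) Once this base inequality is available, the rest of the forward direction is direct, with no induction and no realizability issues: extend an admissible $M_2$ to a maximal independent set $I$, observe $G' = G \setminus N_G[M_2]$ is again unmixed, get $(\flat 1)$ by comparing $\# (V \setminus I) = m_1 + 2 m_2$ with $\# N_G(M_2) + m_1' + 2 m_2'$ and the edge count $m_1 + m_2 = \# M_2 + m_1' + m_2'$, and note that modulo $(\flat 1)$ the condition $(\flat 2)$ for $(G, M_2)$ is exactly the inequality $\# W_0' \le 2 m_2'$ for $G'$; this is how the paper argues. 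As written, your proposal establishes neither $(\flat 2)$ nor, consequently, the full equivalence.
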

\begin{remark}
  \label{rmk:GV}
  We use the same notation as in Theorem \ref{claim:unmixedDIM}. 
  \begin{enumerate}
  \item The empty set $M_2 = \emptyset$ is regarded as 
    satisfying $(\ast 1)$ and $(\ast 2)$. Then $(\flat 1)$ is satisfied 
    as both-hand sides are $0$. Also $(\flat 2)$ must be 
    $\# W_0 \leq 2 m_2$. 
    Indeed if $G$ is unmixed, this inequality holds; 
    see Lemma \ref{claim:GV} below. 
  \item The left-hand side of $(\flat 1)$ is equal to the cardinality of 
    the following set: 
    \begin{displaymath}
      \mathcal{I}_{G, M_2} := \left\{ j \; : \; 
        \begin{aligned} 
          &\deg_G x_{j1} \geq 2, \  \deg_G x_{j2} \geq 2, 
           \  \text{and  one of the following is satisfied:} \\
          &(i) \  M_2 \cap \{ x_{j1}, x_{j2} \} \neq \emptyset; \\
          &(ii) \  N_G (x_{j1}) \setminus \{ x_{j2} \} \subset N_G (M_2); \\
          &(iii) \  N_G (x_{j2}) \setminus \{ x_{j1} \} \subset N_G (M_2) 
        \end{aligned}
        \right\}.  
    \end{displaymath}
    Indeed, $m_2 - m_2'$ counts the number of edges 
    $\{ x_{j1}, x_{j2} \}$ 
    with $\deg_G x_{j1} \geq 2$ and $\deg_G x_{j2} \geq 2$ such that 
    both $x_{j1}$ and $x_{j2}$ are not vertices of $G'$ or 
    one of $\deg_{G'} x_{j1} = 1$ and $\deg_{G'} x_{j2} = 1$ holds. 
  \end{enumerate}
\end{remark}

\par
We first prove the following lemma. 
\begin{lemma}
  \label{claim:GV}
  We use the same notation as above. 
  Let $G$ be a finite simple graph on $V$ with a dominating induced matching: 
  $V = W \sqcup M$. 
  If $G$ is unmixed, then $\# W_0 \leq 2 m_2$. 
\end{lemma}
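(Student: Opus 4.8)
The plan is to exploit the unmixedness hypothesis in the only way it is available, namely that every maximal independent set of $G$ has the same cardinality (recall that $C$ is a minimal vertex cover exactly when $V\setminus C$ is a maximal independent set), and to read off the bound by comparing the sizes of two carefully chosen maximal independent sets. The first step is structural. Since $W$ is independent and the edges $e_j=\{x_{j1},x_{j2}\}$ of $G_M$ form an induced matching, there are no edges between two distinct $e_j$ and no edges inside $W$; hence every edge of $G$ is either some $e_j$ or joins a vertex of $W_0$ to an endpoint of some $e_j$. Consequently a matching-edge endpoint of degree $1$ is adjacent only to its partner, every endpoint of degree $\ge 2$ has all of its neighbours other than its partner in $W_0$, and each $w\in W_0$ has all of its neighbours in $M$. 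In particular $N_G(M)\cap W = W_0$, a fact reserved for the very end.

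The second step produces one explicit maximal independent set whose cardinality pins down the common value. Let $A$ consist of $W$ together with one degree-$1$ endpoint chosen from each of the $m_1$ matching edges possessing such an endpoint. By the structural facts, $A$ is independent (each chosen endpoint meets only its partner, which is not in $A$), and it is maximal: the remaining endpoint of each $m_1$-edge is dominated by its chosen partner, while both endpoints of each $m_2$-edge have degree $\ge 2$ and hence a neighbour in $W_0\subseteq A$. Thus $\#A=\#W+m_1$, and by unmixedness every maximal independent set of $G$ has cardinality $\#W+m_1$.

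The decisive step introduces transversals. For any choice of exactly one endpoint from each matching edge, call the resulting $m$-element set $T$, and put
\[
  B_T := T \sqcup (W\setminus W_0) \sqcup \{\, w\in W_0 : w \text{ has no neighbour in } T \,\}.
\]
One checks exactly as for $A$ that $B_T$ is a maximal independent set, whence
\[
  \#B_T = m + \#(W\setminus W_0) + \#W_0 - \#(N_G(T)\cap W_0) = \#W + m_1 + m_2 - \#(N_G(T)\cap W_0).
\]
Comparing with $\#A=\#W+m_1$ through unmixedness forces $\#(N_G(T)\cap W_0)=m_2$ for \emph{every} transversal $T$. Applying this both to $T$ and to its complementary transversal $M\setminus T$, and using $(N_G(T)\cap W_0)\cup(N_G(M\setminus T)\cap W_0)=N_G(M)\cap W_0=W_0$ from the first step, I obtain
\[
  \#W_0 \le \#(N_G(T)\cap W_0) + \#(N_G(M\setminus T)\cap W_0) = 2m_2,
\]
as required.

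I expect the main obstacle to be the bookkeeping in the second and third steps: one must verify rigorously that $A$ and each $B_T$ are genuinely \emph{maximal} independent sets, so that unmixedness is applicable to them, and that their cardinalities are computed exactly, all of which rests on the edge dichotomy established in the first step. The conceptual crux is then the covering identity $N_G(M)\cap W_0=W_0$ combined with the complementary pair $T,\,M\setminus T$; once the uniform size formula $\#(N_G(T)\cap W_0)=m_2$ is in hand, the inequality follows immediately.
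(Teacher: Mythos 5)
Your proof is correct, but it takes a genuinely different route from the paper's. The paper's argument is short and relies on an external result: it exhibits the explicit minimal vertex cover $C_0$ consisting of both endpoints of each $m_2$-edge together with one endpoint of each $m_1$-edge, deduces from unmixedness that $\height I(G) = m_1 + 2m_2$, and then invokes the Gitler--Valencia bound $2 \height I(G) \geq \# W_0 + \# M$ to conclude $\# W_0 \leq 2(m_1+2m_2) - 2(m_1+m_2) = 2m_2$. You instead work entirely on the independent-set side: the maximal independent set $A = W \cup \{\text{chosen degree-one endpoints}\}$ pins the common cardinality at $\# W + m_1$, and comparing it with the maximal independent sets $B_T$ built from transversals $T$ of the matching forces the uniform identity $\# (N_G(T) \cap W_0) = m_2$ for \emph{every} transversal; applying this to a complementary pair $T$, $M \setminus T$ and using $N_G(M) \cap W = W_0$ yields $\# W_0 \leq 2m_2$. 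Your verifications are sound: independence and maximality of $A$ and of each $B_T$, and the cardinality computations, all rest correctly on the dichotomy that every edge of $G$ is either a matching edge or joins an endpoint of a matching edge to a vertex of $W_0$, which is exactly what the dominating-induced-matching decomposition provides. What each approach buys: the paper's proof is shorter at the cost of citing Gitler--Valencia, while yours is elementary and self-contained (in effect re-proving the special case of that bound needed here) and even establishes the stronger intermediate fact that every transversal of the matching dominates exactly $m_2$ vertices of $W_0$, which is not visible from the paper's argument.
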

\begin{proof}
  Let us consider the following subset of $V$: 
  \begin{equation}
    \label{eq:MinimalVertexCover}
    C_0 := \left( \bigcup_{\genfrac{}{}{0pt}{}{j}{\deg_G x_{j1} \geq 2, \, \deg_G x_{j2} \geq 2}} 
      \{ x_{j1}, x_{j2} \} \right) 
    \cup \left( \bigcup_{\genfrac{}{}{0pt}{}{j}{\deg_G x_{j k} \geq \deg_G x_{j \ell} = 1}} 
      \{ x_{j k} \} \right). 
  \end{equation}
  Then $C_0$ is a minimal vertex cover of $G$. Since 
  $G$ is unmixed, we have $\height I(G) = m_1 + 2 m_2$. 
  By Gitler and Valencia \cite[Corollary 3.4]{GV}, 
  we have $2 \height I(G) \geq \# W_0 + \# M$. 
  Note that $\# M = 2 (m_1 + m_2)$. 
  Hence 
  \begin{displaymath}
    \# W_0 \leq 2 \height I(G) - \# M 
           = 2 (m_1 + 2 m_2) - 2 (m_1 + m_2) 
           = 2 m_2. 
  \end{displaymath}
\end{proof}

Now we prove Theorem \ref{claim:unmixedDIM}. 
\begin{proof}[Proof of Theorem \ref{claim:unmixedDIM}]
  We first assume that $G$ is unmixed. 
  Let $W \sqcup M$ be a decomposition of $V$ where 
  $W$ is an independent set of $G$ and $G_M$ consists of 
  disconnected edges $\{ x_{j1}, x_{j2} \}$, $j=1, \ldots, m$. 
  Since $C_0$ in (\ref{eq:MinimalVertexCover}) 
  is a minimal vertex cover of $G$, 
  the cardinality of any minimal vertex cover of $G$ is 
  $m_1 + 2 m_2$. 
  Let $M_2$ be a subset of $M$ satisfying the conditions 
  $(\ast 1)$ and $(\ast 2)$. 
  Then $G' = G \setminus N_G [M_2]$ is an edgeless graph or a graph with 
  dominating induced matching as noted before Theorem 
  \ref{claim:unmixedDIM}. 
  Considering the minimal vertex cover of $G$ which is disjoint with $M_2$, 
  we have $m_1 + 2 m_2 = \# N_G (M_2) + m_1' + 2 m_ 2'$. 
  (If $G'$ is an edgeless graph, 
  then we consider both $m_1'$ and $m_2'$ as $0$.) 
  Also focusing on the number of edges of $G_M$, we have 
  $m_1 + m_2 = \# M_2 + m_1' + m_2'$. 
  Then we have 
  \begin{equation}
    \label{eq:m2-m2'}
    m_2 - m_2' = \# N_G (M_2) - \# M_2. 
  \end{equation}
  Hence $(\flat 1)$ holds. 
  Also note that when $G$ is unmixed, $G'$ is also unmixed 
  since the union of a minimal vertex cover of $G'$ 
  and $N_G (M_2)$ is a minimal vertex cover of $G$. 
  Then by Lemma \ref{claim:GV}, we have 
  $\# W_0' \leq 2 m_2'$. 
  Since 
  \begin{displaymath}
      \# W_0' = \# W_0 - (\# N_G (M_2) - \# M_2) - \# IN_G (M_2, W) 
  \end{displaymath}
  and (\ref{eq:m2-m2'}), we have 
  \begin{displaymath}
    \begin{aligned}
      \# W_0 &= \# W_0' + (\# N_G (M_2) - \# M_2) + \# IN_G (M_2, W) \\
             &\leq 2 m_2' + (\# N_G (M_2) - \# M_2) + \# IN_G (M_2, W) \\
             &= 2 (m_2 - \# N_G (M_2) + \# M_2) 
              + (\# N_G (M_2) - \# M_2) + \# IN_G (M_2, W) \\
             &= 2 m_2 - \# N_G (M_2) + \# M_2 + \# IN_G (M_2, W). 
    \end{aligned}
  \end{displaymath}
  Thus $(\flat 2)$ also holds. 

  \par
  \smallskip

  \par
  We next assume that the decomposition $V=W \sqcup M$
  satisfies the condition $(\flat)$. 
  As noted in Remark \ref{rmk:GV}, 
  the inequality $\# W_0 \leq 2 m_2$ is satisfied. 
  We use induction on $m$. 

  \par
  When $m = 1$, there are 2 cases: 
  $(m_1, m_2) = (1,0), (0,1)$. 

  \par
  If $(m_1, m_2) = (1,0)$, then $\# W_0 \leq 2 m_2 = 0$. 
  Therefore it follows that $G$ is a graph consisting of a single edge with 
  isolated vertices and thus $G$ is unmixed. 

  \par
  If $(m_1, m_2) = (0,1)$, 
  then $\# W_0 \leq 2 m_2 = 2$. Also, since $m_2 = 1 > 0$, 
  we have $\# W_0 > 0$. 
  Hence $\# W_0 = 1, 2$. 
  We first assume that $\# W_0 = 1$. 
  Since $\deg_G x_{11}, \deg_G x_{12} \geq 2$, it follows that 
  $G$ is a triangle with isolated vertices. Thus it is unmixed. 
  We next assume that $\# W_0 = 2$. Put $W_0 = \{ w_1, w_2 \}$. 
  Take $M_2 = \{ x_{12} \}$. Then $M_2$ satisfies the conditions 
  $(\ast 1)$ and $(\ast 2)$. 
  By $(\flat 2)$, we have 
  \begin{displaymath}
    \begin{aligned}
      2 = \# W_0 &\leq 2 m_2 - \# N_G (M_2) + \# M_2 + \# IN_G (M_2, W) \\
                 &= 2 - \# N_G (x_{12}) + 1 + \# IN_G (\{ x_{12} \}, W). 
    \end{aligned}
  \end{displaymath}
  Hence $\deg_G x_{12} = \# N_G (x_{12}) \leq \# IN_G (\{ x_{12} \}, W) + 1$. 
  Note that $\deg_G x_{12} = 2,3$. 
  If $\deg_G x_{12} = 3$, then $\{ x_{12}, w_1 \}, \{ x_{12}, w_2 \} \in E(G)$ 
  and $IN_G (\{ x_{12}\}, W) = \emptyset$. 
  This contradicts to $\deg_G x_{12} \leq \# IN_G (\{ x_{12} \}, W) +1$. 
  Hence $\deg_G x_{12} = 2$. The same is true for $x_{11}$. 
  Therefore we conclude that the edge set of $G_{W_0 \cup M}$ is, 
  by renumbering the vertices, 
  $\{ \{ x_{11}, w_1 \}, \{ x_{12}, w_2 \}, \{ x_{11}, x_{12} \} \}$, 
  and thus $G$ is unmixed. 

  \par
  We next assume that $m \geq 2$. 
  Since $C_0$ is a minimal vertex cover 
  of $G$ with cardinality $m_1 + 2 m_2$, 
  it is sufficient to prove that the cardinality of 
  any minimal vertex cover of $G$ is $m_1 + 2 m_2$. 
  Let $C$ be a minimal vertex cover of $G$ which is not of the form $C_0$. 
  Then there exists a vertex in $M$, say $x_{m 2}$, 
  with $x_{m 2} \notin C$ such that $\deg_G x_{m 2} \geq 2$. 
  Then $N_G (x_{m 2}) \subset C$. 
  As noted in Lemma \ref{claim:min-ver-cov} below, 
  we have that $C \setminus N_G (x_{m 2})$ is a minimal vertex cover of 
  $G \setminus N_G [x_{m 2}]$. 

  \par
  Put $G'' := G \setminus N_G [x_{m 2}]$. 
  Then $G''$ is also a graph with a dominating induced matching. 
  Let $W'' \sqcup M''$ be the decomposition of the vertex set $V'' = V(G'')$ 
  induced by the decomposition $V=W \sqcup M$. 
  Note that $m_1'' + m_2'' = m'' = m - 1$. 
  Then it is sufficient to prove that $V'' = W'' \sqcup M''$ 
  also satisfies the condition $(\flat)$ for $G''$. 
  Indeed, when this is the case, it follows that $G''$ is unmixed 
  by inductive hypothesis. 
  Therefore
  \begin{displaymath}
    \# (C \setminus N_G (x_{m 2})) = m_1'' + 2 m_2''. 
  \end{displaymath}
  Consider the condition $(\flat)$ with $\{ x_{m 2} \}$. 
  By $(\flat 1)$, we have 
  \begin{displaymath}
      m_2 - m_2'' = \# N_G (x_{m 2}) - 1. 
  \end{displaymath}
  Hence
  \begin{displaymath}
    \begin{aligned}
      \# C &= \# (C \setminus N_G (x_{m 2})) + \# N_G (x_{m 2}) \\
           &= (m_1'' + 2 m_2'') + (m_2 - m_2'' + 1) \\
           &= m_1'' + m_2'' + 1 + m_2 \\
           &= m + m_2 \\
           &= m_1 + 2 m_2, 
    \end{aligned}
  \end{displaymath}
  as required. 

  \par
  Now we prove that $V'' = W'' \sqcup M''$ 
  also satisfies the condition $(\flat)$ for $G''$. 

  \par
  Let $M_2''$ be a subset of $M''$ satisfying $(\ast 1)$ and $(\ast 2)$ 
  for $G''$. 
  We need to prove that $(\flat 1)$ and $(\flat 2)$ are satisfied. 
  In order to prove $(\flat 1)$, we use the description of the left-hand side 
  of $(\flat 1)$ as in Remark \ref{rmk:GV} (2). 
  Put $M_2 = M_2'' \cup \{ x_{m 2} \}$. 
  Note that $M_2$ and $\{ x_{m2} \}$ satisfy 
  $(\ast 1)$ and $(\ast 2)$ for $G$. 
  Also note that the right-hand side of $(\flat 1)$ for $(G, M_2)$ is 
  \begin{equation}
    \label{eq:neighbour}
    \# N_G (M_2) - \# M_2 
    = (\# N_{G''} (M_2'') - \# M_2'') + (\# N_G (x_{m 2}) - 1) 
  \end{equation}
  because $\# M_2 = \# M_2'' + 1$ and 
  $\# N_G (M_2) = \# N_{G''} (M_2'') + \# N_G (x_{m 2})$. 

  \par
  Now, let $j$ be an index with $\deg_G x_{j1} \geq 2$ 
  and $\deg_G x_{j2} \geq 2$. 
  Recall that the left-hand side of $(\flat 1)$ for $(G, M_2)$ is the number of 
  $j$ for which one of the condition (i), (ii), (iii) 
  inside $\mathcal{I}_{G, M_2}$ is satisfied. 
  We compare the satisfaction of the condition for the pair $(G, M_2)$ 
  with that for the pair $(G'', M_2'')$. 
  If $j \neq m$, and 
  $\deg_{G''} x_{j1} \geq 2$ and $\deg_{G''} x_{j2} \geq 2$, 
  then the satisfaction of each of the conditions (i), (ii), (iii) 
  inside $\mathcal{I}_{G, M_2}$ and $\mathcal{I}_{G'', M_2''}$ is equivalent.  
  If $j=m$, then note that $x_{m 2} \in \{ x_{m2} \} \subset M_2$, 
  that is $\{ x_{m 2} \} \cap \{ x_{m1}, x_{m2} \} \neq \emptyset$ as well as 
  $M_2 \cap \{ x_{m1}, x_{m2} \} \neq \emptyset$, 
  which corresponds to the condition (i) 
  inside $\mathcal{I}_{G, \{ x_{m2} \}}$, $\mathcal{I}_{G, M_2}$, respectively. 
  If $j \neq m$, and $\deg_{G''} x_{j1} \leq 1$ or 
  $\deg_{G''} x_{j2} \leq 1$, then 
  one of the following is satisfied: 
  \begin{displaymath}
    \begin{aligned}
      N_G (x_{j1}) \setminus \{ x_{j2} \} 
        &\subset N_G (x_{m 2}) \subset N_G (M_2), \\
      N_G (x_{j2}) \setminus \{ x_{j1} \} 
        &\subset N_G (x_{m 2}) \subset N_G (M_2). 
    \end{aligned}
  \end{displaymath}
  These correspond to the conditions (ii), (iii) 
  inside $\mathcal{I}_{G, \{ x_{m2} \} }$, $\mathcal{I}_{G, M_2}$. 
  Note that when $j \neq m$, and $\deg_{G''} x_{j1} \geq 2$ and 
  $\deg_{G''} x_{j2} \geq 2$, the cases (i), (ii), (iii) inside 
  $\mathcal{I}_{G, \{ x_{m2} \} }$ do not occur.  
  Combining these with Remark \ref{rmk:GV} (2), we have that  
  the lefthand-side of ($\flat 1$) for $M_2$ with respect to $G$ 
  is equal to 
  the sum of the lefthand-side of ($\flat 1$) for $\{ x_{m 2} \}$ 
  with respect to $G$ 
  and the lefthand-side of ($\flat 1$) for $M_2''$ with respect to $G''$. 
  Hence by assumption for $G$, we have that the lefthand-side of 
  ($\flat 1$) for $M_2''$ with respect to $G''$ is equal to 
  \begin{displaymath}
    (\# N_G (M_2) - \# M_2) 
    - (\# N_G (\{ x_{m 2} \}) - \# \{ x_{m 2} \})
  \end{displaymath}
  By (\ref{eq:neighbour}), it is equal to 
  \begin{displaymath}
    \# N_{G''} (M_2'') - \# M_2'', 
  \end{displaymath}
  as desired. 

  \par
  Finally we prove the inequality ($\flat 2$) 
  for $M_2''$ with respect to $G''$. 
  Let $W_0''$ be the set of vertices in $V'' \cap W'' = V'' \cap W$ 
  which are not isolated in $G''$. 
  Then 
  \begin{displaymath}
    \# W_0'' = \# W_0 - (\# N_G (x_{m 2}) - 1) - \# IN_G (\{ x_{m 2} \}, W). 
  \end{displaymath}
  Also 
  \begin{displaymath}
    \begin{aligned}
      \# N_{G''} (M_2'') &= \# N_G (M_2) - \# N_G (x_{m2}), \\
      \# IN_{G''} (M_2'', W'') 
         &= \# IN_G (M_2, W) - \# IN_G (\{ x_{m 2} \}, W). 
    \end{aligned}
  \end{displaymath}
  Furthermore, it follows from the assumption $(\flat 1)$ 
  for $\{ x_{m 2} \}$ with respect to $G$ that 
  $m_2'' = m_2 - \# N_G (x_{m 2}) + 1$. 
  Then 
  \begin{displaymath}
    \begin{aligned}
      &2 m_2'' - \# N_{G''} (M_2'') + \# M_2'' + \# IN_{G''} (M_2'', W'') 
      - \# W_0'' \\
      &= 2 (m_2 - \# N_G (x_{m 2}) + 1) 
       - (\# N_G (M_2) - \# N_G (x_{m 2})) 
       + (\# M_2 - 1) \\
      &+ (\# IN_G (M_2, W) - \# IN_G (\{ x_{m 2} \}, W)) 
       - (\# W_0 - (\# N_G (x_{m 2}) - 1) - \# IN_G (\{ x_{m 2} \}, W)) \\
      &= 2 m_2 - \# N_G (M_2) + \# M_2 + \# IN_G (M_2, W) - \# W_0 \geq 0
    \end{aligned}
  \end{displaymath}
  by the assumption $(\flat 2)$ for $M_2$ with respect to $G$. 
  Hence $(\flat 2)$ for $M_2''$ with respect to $G''$ 
  is also satisfied as desired. 
\end{proof}

\begin{lemma}
  \label{claim:min-ver-cov}
  $C \setminus N_G (x_{m 2})$ is a minimal vertex cover of 
  $G \setminus N_G [x_{m 2}]$. 
\end{lemma}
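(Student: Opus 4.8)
The plan is to establish two things separately: that $C' := C \setminus N_G(x_{m2})$ covers every edge of $G' := G \setminus N_G[x_{m2}]$, and that no vertex of $C'$ can be discarded without destroying the covering property. Throughout I would use the basic observation that, since $C$ is a vertex cover of $G$ and $x_{m2} \notin C$, every neighbour of $x_{m2}$ must lie in $C$; that is, $N_G(x_{m2}) \subseteq C$. In particular no element of $C'$ is equal to $x_{m2}$ or adjacent to it, so $C' \subseteq V(G') = V \setminus N_G[x_{m2}]$ and $C'$ is a legitimate candidate for a vertex cover of $G'$.

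First I would verify the covering property. Let $\{a,b\}$ be an edge of $G'$. Since $G'$ is an induced subgraph of $G$, this is also an edge of $G$, and both endpoints lie in $V(G')$, hence avoid $N_G(x_{m2})$. Because $C$ covers $\{a,b\}$ in $G$, one endpoint, say $a$, lies in $C$; as $a \notin N_G(x_{m2})$, in fact $a \in C \setminus N_G(x_{m2}) = C'$. Thus every edge of $G'$ meets $C'$, and $C'$ is a vertex cover of $G'$.

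Next I would prove minimality. Fix $v \in C'$. Since $C$ is a \emph{minimal} vertex cover of $G$, there is an edge $\{v,w\} \in E(G)$ with $w \notin C$, for otherwise $C \setminus \{v\}$ would still cover $G$. The one step needing care is to check that this witnessing edge survives in $G'$, i.e., that $w \in V(G')$. Here $w \neq x_{m2}$, because $v \in C' \subseteq V(G')$ is not adjacent to $x_{m2}$; and $w \notin N_G(x_{m2})$, because $N_G(x_{m2}) \subseteq C$ whereas $w \notin C$. Hence $w \notin N_G[x_{m2}]$, so $\{v,w\} \in E(G')$. Since $w \notin C \supseteq C'$, the set $C' \setminus \{v\}$ leaves the edge $\{v,w\}$ uncovered, so $v$ cannot be removed. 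As $v$ was arbitrary, $C'$ is minimal.

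The main (and only mild) obstacle is exactly the survival of the private neighbour $w$ into $G'$; once one records that $N_G(x_{m2}) \subseteq C$, this becomes immediate, and the rest is bookkeeping. I note that neither the dominating induced matching nor the decomposition $V = W \sqcup M$ is actually used: the statement holds for any minimal vertex cover $C$ of an arbitrary finite simple graph and any vertex $x_{m2} \notin C$.
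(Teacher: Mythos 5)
Your proof is correct, and its first half (the covering property) is exactly the paper's argument: an edge of $G' = G \setminus N_G[x_{m2}]$ avoids $N_G[x_{m2}]$, meets $C$, hence meets $C \setminus N_G(x_{m2})$. Where you diverge is the minimality step. The paper argues by contradiction at the level of whole covers: if some $D \subsetneq C \setminus N_G(x_{m2})$ covered $G'$, then $D \cup N_G(x_{m2})$ would be a vertex cover of $G$ properly contained in $C$ (checked by splitting the edges of $G$ according to whether they meet $N_G(x_{m2})$), contradicting the minimality of $C$. You instead argue pointwise: each $v \in C \setminus N_G(x_{m2})$ has a private edge $\{v,w\}$ with $w \notin C$ (by minimality of $C$), and you check that this witness survives into $G'$ because $w \neq x_{m2}$ (as $v \notin N_G(x_{m2})$) and $w \notin N_G(x_{m2}) \subseteq C$. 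Both routes hinge on the same contextual fact, established just before the lemma is invoked in the proof of Theorem \ref{claim:unmixedDIM}, namely $x_{m2} \notin C$ and hence $N_G(x_{m2}) \subseteq C$; the paper uses it implicitly to get the proper inclusion $D \cup N_G(x_{m2}) \subsetneq C$, while you use it explicitly twice. The paper's version is slightly more economical; yours makes transparent exactly where the hypothesis enters and justifies your closing observation that the lemma is really a general fact about any minimal vertex cover $C$ of any finite simple graph and any vertex outside $C$ --- an observation that is accurate and equally true of the paper's proof, which likewise never uses the dominating induced matching structure.
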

\begin{proof}
  We first prove that $C \setminus N_G (x_{m 2})$ is a vertex cover of 
  $G \setminus N_G [x_{m 2}]$. 
  Let $e$ be an edge of $G \setminus N_G [x_{m 2}]$. 
  Then $e \cap N_G [x_{m 2}] = \emptyset$. 
  Also, since $C$ is a vertex cover of $G$, 
  it follows that $e \cap C \neq \emptyset$. 
  Combining these facts we have 
  $e \cap (C \setminus N_G (x_{m 2})) \neq \emptyset$. 

  \par
  We next prove the minimality of $C \setminus N_G (x_{m 2})$. 
  Assume that $C' \subsetneq C \setminus N_G (x_{m 2})$ is a vertex cover of 
  $G \setminus N_G [x_{m 2}]$. 
  Then $C' \cup N_G (x_{m 2}) \subsetneq C$. 
  We derive a contradiction by proving that 
  $C'' := C' \cup N_G (x_{m 2})$ is a vertex cover of $G$. 

  \par
  Let $e$ be an edge of $G$. 
  If $e \cap N_G (x_{m 2}) \neq \emptyset$, 
  then $e \cap C'' \neq \emptyset$. 
  If $e \cap N_G (x_{m 2}) = \emptyset$, then $x_{m2} \notin e$ and 
  $e$ is an edge of $G \setminus N_G [x_{m 2}]$. 
  Since $C'$ is a vertex cover of $G \setminus N_G [x_{m 2}]$, 
  it follows that $e \cap C' \neq \emptyset$. 
  Therefore $e \cap C'' \neq \emptyset$. 
\end{proof}

\par
A graph $G$ is called \textit{forest} 
if $G$ has no cycle. 
The chordalness of a graph with a dominating induced matching 
is characterized as follows: 
\begin{theorem}
  \label{chordal-vertex-edge-decomp}
  Let $G$ be a finite simple graph on $V$ with a dominating induced matching. 
  Let 
  \begin{displaymath}
    \mathcal{M} = \{ \{ x_{j1}, x_{j2} \} \; : \; 
      i=1, 2, \ldots, m \} 
  \end{displaymath}
  be a matching of $G$ so that 
  \begin{displaymath}
    W = V \setminus \bigcup_{j=1}^m \{ x_{j1}, x_{j2} \}
  \end{displaymath}
  is an independent set of $G$. 

  \par
  Let $\widetilde{G}$ be the graph obtained 
  by identifying $x_{j1}$ and $x_{j2}$ 
  for $j=1, 2, \ldots, m$. 
  That is $\widetilde{G}$ is a graph on the vertex set 
  $\widetilde{V} := W \cup \{ x_{1}, \ldots, x_m \}$ 
  with the edge set 
  \begin{displaymath}
    E(\widetilde{G}) = \{ \{ w, x_j \} \; : \; 
      \text{$\{ w, x_{j1} \} \in E(G)$ or $\{ w, x_{j2} \} \in E(G)$} \}. 
  \end{displaymath}

  \par
  Then $G$ is chordal if and only if 
  $\widetilde{G}$ is a forest. 
\end{theorem}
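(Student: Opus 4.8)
The plan is to prove both directions of the equivalence by carefully tracking how cycles in $G$ correspond to cycles in $\widetilde{G}$ under the identification map that collapses each matched edge $\{x_{j1}, x_{j2}\}$ to a single vertex $x_j$. The key observation is that $W$ is independent, so every edge of $G$ either lies inside a matched pair (these are the edges $\{x_{j1},x_{j2}\}$, which get collapsed to nothing in $\widetilde{G}$) or joins some $x_{jk}$ to a vertex $w \in W$ (these survive as edges $\{w, x_j\}$ in $\widetilde{G}$). Since $\widetilde G$ is obtained by collapsing, a forest is a graph with no cycle at all, so I must show that $G$ has a chordless cycle of length $\geq 4$ if and only if $\widetilde G$ has any cycle.

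**The contrapositive direction (if $\widetilde{G}$ has a cycle, then $G$ is not chordal).**
First I would take a cycle $C$ in $\widetilde{G}$ of minimum length; such a cycle is automatically chordless in $\widetilde{G}$. I would lift $C$ to a walk in $G$: each edge $\{w, x_j\}$ of $\widetilde{G}$ comes from an edge $\{w, x_{j1}\}$ or $\{w, x_{j2}\}$ of $G$, and each vertex $x_j$ visited by $C$ must be replaced in $G$ by either the single endpoint used or, if the two incident edges of $C$ at $x_j$ came from different representatives $x_{j1}$ and $x_{j2}$, by traversing the matched edge $\{x_{j1}, x_{j2}\}$ as a detour. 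This lifting produces a cycle in $G$; I would then argue that after this lifting the resulting cycle (of length roughly $\ell(C)$ or $\ell(C) + (\text{number of splits})$) is chordless, using the minimality of $C$ and the independence of $W$ to rule out chords. The main point is that a chord in the lifted cycle would project to a chord or a shortcut in $C$, contradicting minimality.

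**The forward direction (if $G$ is not chordal, then $\widetilde{G}$ has a cycle).**
Conversely, suppose $G$ has a chordless cycle $D$ of length $\geq 4$. I would project $D$ into $\widetilde{G}$ by applying the identification map, and show the image contains a cycle. Because $W$ is independent, the cycle $D$ must alternate in a controlled way between $W$-vertices and matched vertices; I would analyze how consecutive vertices of $D$ map, noting that a matched edge $\{x_{j1}, x_{j2}\}$ collapses to a point while the $W$-to-$M$ edges survive. The image walk in $\widetilde{G}$ is closed and, once I discard collapsed matched edges, it cannot reduce to a single vertex because $D$ visits at least two distinct $W$-vertices (here the length $\geq 4$ and chordlessness are essential); hence the image contains a genuine cycle, so $\widetilde{G}$ is not a forest.

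**The main obstacle.**
I expect the delicate step to be the forward direction's verification that the projected closed walk genuinely contains a cycle rather than collapsing or backtracking to a trivial loop, and symmetrically, in the reverse direction, the verification that the lifted cycle is truly \emph{chordless} in $G$. Both hinge on a careful case analysis of how a $W$-vertex can be adjacent to both $x_{j1}$ and $x_{j2}$ for the same $j$: such a configuration creates a triangle-like collapse in $\widetilde{G}$ and must be handled separately, since it can produce short cycles in $\widetilde{G}$ that correspond to chords or triangles in $G$ rather than to long chordless cycles. I would isolate this "double-adjacency" case explicitly and check that a $w$ adjacent to both endpoints of a matched edge already forces either a chord in any cycle through it or a $3$-cycle, keeping the correspondence between chordlessness and acyclicity intact.
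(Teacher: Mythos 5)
Your proposal is correct and follows essentially the same route as the paper's proof: project a chordless cycle of $G$ to a cycle of $\widetilde{G}$ (noting that a matched pair contained in the cycle must be adjacent there, with distinct outer neighbours in $W$), and conversely lift a minimal cycle of $\widetilde{G}$ to a minimum-length cycle of $G$, where any chord either projects to a chord of the minimal cycle or falls into exactly your ``double-adjacency'' case ($w$ adjacent to both $x_{j1}$ and $x_{j2}$), which the paper resolves by replacing the detour through $x_{j2}$ with the chord to get a shorter lift. One small caveat: the classification of edges you invoke (no edges joining distinct matched pairs) follows from $\mathcal{M}$ being an \emph{induced} matching, not merely from the independence of $W$, though the paper's proof relies on the same fact.
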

\begin{proof}
  We first prove that if $G$ is not chordal, 
  then $\widetilde{G}$ is not a forest, 
  in other words, $\widetilde{G}$ has a cycle. 
  Assume that $G$ has a chordless cycle $C$ of length $\ell$ with $\ell > 3$. 
  Let $\widetilde{C}$ be the subgraph of $\widetilde{G}$ 
  obtained from $C$ by the same operation 
  as we obtain $\widetilde{G}$ from $G$. 
  If there is no $j$ such that both of $x_{j1}, x_{j2}$ are vertices of $C$, 
  then $\widetilde{C}$ is also a cycle. 
  Hence $\widetilde{G}$ has a cycle. 
  If both of $x_{j1}, x_{j2}$ are vertices of $C$, then 
  these must be adjacent in $C$ because $C$ is a chordless cycle. 
  Since $\ell > 3$, the other adjacent vertices $y_{i_1}$, $y_{i_2}$ 
  of $x_{j1}, x_{j2}$ are different. 
  Note that $y_{i_1}, y_{i_2} \in W$. 
  Then $y_{i_1}, y_{i_2}, x_{j}$ are vertices of $\widetilde{C}$. 
  It then follows that $\widetilde{C}$ is a cycle of $\widetilde{G}$. 

  \par
  Next suppose that $G$ is chordal. 
  Assume that $\widetilde{G}$ has a cycle. 
  Let $\widetilde{C}$ be a minimal cycle of $\widetilde{G}$ 
  and let $\ell$ be the length of $\widetilde{C}$. 
  Since $\widetilde{G}$ is a bipartite graph, 
  $\ell$ must be even and thus $\ell \geq 4$. 
  Let $C$ be a cycle of $G$ corresponding to $\widetilde{C}$ 
  with the minimum length. 
  Then the length of $C$ is greater than or equal to $\ell \geq 4$. 
  Since $G$ is chordal, $C$ must have a chord $e$. 
  We may assume that $e = \{ w, x_{j1} \}$ where $w \in W$. 
  Since $C$ is a cycle, there are two paths from $w$ to $x_{j1}$; 
  we take with the shorter length; 
  let $y_0 = w, y_1, y_2, \ldots, y_k = x_{j1}$ be a sequence of 
  vertices of such path in $C$ where $k \geq 2$ and 
  $\{ y_i, y_{i+1} \} \in E(C)$ for $i = 0, 1, \ldots, k-1$. 
  If $k > 2$, then $\{ w, x_{j1} \}$ must be a chord of $\widetilde{C}$, 
  a contradiction. 
  If $k=2$, then $y_1 = x_{j2}$ and $C \setminus \{ y_1 \}$ is also 
  a cycle corresponding to $\widetilde{C}$. 
  This contradicts to the minimality of $C$. 
\end{proof}

\section{Some algebraic properties}
In this section, we investigate 
algebraic properties of the edge ideal of a graph with 
a dominating induced matching. 

\par
In \cite{HHKO}, it is proved that a Cameron--Walker graph is 
vertex decomposable, in particular, it is sequentially 
Cohen--Macaulay. 
But there is a graph with a dominating induced matching which is not 
sequentially Cohen--Macaulay; 
the $6$-cycle is such an example; see \cite[Proposition 4.1]{FVT}. 

\par
We obtain some class of vertex decomposable graphs 
among graphs with a dominating induced matching. 

\par
Recall that a graph $G$ on $V$ is called vertex decomposable 
(see \cite[Lemma 4]{Woodroofe-vertex-decomp}) if 
$G$ is an edgeless graph or there exists $v \in V$ with the 
following $2$ properties: 
\begin{enumerate}
\item[(VD1)] $G \setminus v$ and $G \setminus N[v]$ are vertex decomposable; 
\item[(VD2)] no independent set in $G \setminus N[v]$ 
  is a maximal independent set in $G \setminus v$. 
\end{enumerate}
We call $v \in V$ a \textit{shedding vertex} of $G$ if $v$ satisfies (VD2). 
Note that for a vertex $v \in V$, if there exists $w \in V$ 
such that $N[w] \subset N[v]$, 
then $v$ is a shedding vertex (\cite[Lemma 6]{Woodroofe-vertex-decomp}). 
\begin{theorem}
  \label{DIM-VD}
  Let $G$ be a finite simple graph on $V$ with a dominating induced matching. 
  Assume that there exists a decomposition $V = W \sqcup M$ 
  satisfying the following property, 
  where $W = \{ y_1, \ldots, y_r \}$ 
  is an independent set and $G_M$ consists of 
  $m$ disconnected edges $\{ x_{j1}, x_{j2} \}$, $j = 1, \ldots, m$. 
  Moreover assume that for each $j=1, \ldots, m$, 
  one of the following is satisfied: 
  \begin{enumerate}
  \item[(i)] $\deg_G x_{j1} = 1$ or $\deg_G x_{j2} = 1$; 
  \item[(ii)] $\deg_G x_{j1} = \deg_G x_{j2} = 2$ and there is 
    $y_{i_j} \in W$ such that $x_{j1}, x_{j2} \in N_G(y_{i_j})$; 
  \item[(iii)] $\deg_G x_{jk} = 3$ and $\deg_G x_{jl} = 2$ where 
    $\{ k, l \} = \{ 1, 2 \}$, and there is $y_{i_j} \in W$ 
    such that $N_G (y_{i_j}) = \{ x_{j1}, x_{j2} \}$; 
  \item[(iv)] $\deg_G x_{j1} = \deg_G x_{j2} = 3$ and 
    there are distinct three vertices 
    $y_{i_{j1}}, y_{i_{j2}}, y_{i_{j3}} \in W$ such that 
    $\{ x_{j1}, y_{i_{j1}} \}, \{ x_{j2}, y_{i_{j2}} \} \in E(G)$,  
    $N_G (y_{i_{j3}}) = \{ x_{j1}, x_{j2} \}$,   
    and there is a pendant triangle 
    attached to at least one of $y_{i_{j1}}, y_{i_{j2}}$. 
  \end{enumerate}
  Then $G$ is vertex decomposable. 
\end{theorem}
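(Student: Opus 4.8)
The plan is to prove vertex decomposability by induction on the number of vertices $\#V$, exploiting the recursive definition (VD1)--(VD2) together with Woodroofe's shedding criterion quoted above: if some $w$ satisfies $N_G[w]\subseteq N_G[v]$, then $v$ is a shedding vertex. The base case is the edgeless graph (which occurs when $m=0$), vertex decomposable by definition. For the inductive step I single out one matching edge $e_j=\{x_{j1},x_{j2}\}$ and, according to which of the conditions (i)--(iv) it satisfies, choose a vertex $v$ together with a witness $w$ with $N_G[w]\subseteq N_G[v]$. The whole point will be that both $G\setminus v$ and $G\setminus N_G[v]$ again admit a decomposition of the form prescribed in the theorem, so that they are vertex decomposable by the inductive hypothesis; since $v$ is a shedding vertex this yields (VD1) and (VD2), and hence $G$ is vertex decomposable.

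The shedding vertices are selected as follows. In case (i), say $\deg_G x_{j1}=1$, take $v=x_{j2}$ and $w=x_{j1}$, so that $N_G[x_{j1}]=\{x_{j1},x_{j2}\}\subseteq N_G[x_{j2}]$. In case (ii) take $v=y_{i_j}$ and $w=x_{j1}$, so that $N_G[x_{j1}]=\{x_{j1},x_{j2},y_{i_j}\}\subseteq N_G[y_{i_j}]$. In case (iii), writing $x_{jl}$ for the endpoint of degree $2$, take $v=x_{jl}$ and $w=y_{i_j}$, so that $N_G[y_{i_j}]=\{y_{i_j},x_{j1},x_{j2}\}=N_G[x_{jl}]$. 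In case (iv) take $v=x_{j1}$ and $w=y_{i_{j3}}$, so that $N_G[y_{i_{j3}}]=\{y_{i_{j3}},x_{j1},x_{j2}\}\subseteq N_G[x_{j1}]$. In each situation the criterion shows that $v$ is a shedding vertex, disposing of (VD2), and it only remains to realize $G\setminus v$ and $G\setminus N_G[v]$ as graphs satisfying the hypotheses.

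The reductions rely on two elementary observations, valid because the matching vertices of distinct edges are non-adjacent: deleting a matching vertex affects only its partner and some vertices of $W$, whereas deleting a vertex of $W$ removes no matching vertex and merely lowers degrees. When $v\in M$ (cases (i), (iii), (iv)) one has $N_G[v]\cap M=\{v,\text{partner}\}$, so $G\setminus N_G[v]$ deletes exactly the edge $e_j$ together with some vertices of $W$, leaving every other matching edge with both endpoints intact; each such edge then still falls under one of (i)--(iv), possibly \emph{degraded} to a case of lower index (e.g.\ (iv) may drop to (iii) when a $W$-neighbour is lost, and (ii) or (iii) may become an isolated edge, i.e.\ case (i)). The opposite deletion $G\setminus v$, or the deletion $G\setminus N_G[v]$ in case (ii) where $v=y_{i_j}\in W$, can instead \emph{strand} a former matching vertex or turn it into a pendant vertex. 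The key to repairing this is that the stranded vertex is always adjacent to one of the \emph{exact-neighbourhood} vertices $y_{i_j}$ (case (iii)) or $y_{i_{j3}}$ (case (iv)), which has degree exactly $2$; after the deletion such a $W$-vertex becomes a leaf, and absorbing it into the matching as a new leaf edge produces a decomposition $V'=W'\sqcup M'$ of the required form, to which the inductive hypothesis applies.

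The main obstacle is exactly this structural bookkeeping. One must check, case by case, that after removing $v$ or $N_G[v]$ the matching can be re-chosen so that every surviving edge again satisfies one of (i)--(iv), and in particular that the pendant-triangle clause of (iv) is either preserved or becomes unnecessary under degradation: removing an apex turns each of its pendant triangles into an isolated edge (case (i)), while removing an endpoint of a case-(iv) edge drops that edge to case (iii) through the surviving degree-$2$ vertex $y_{i_{j3}}$. The cleanest way to organize the argument is to record, for each vertex of $W$, which matching edges it is responsible for, and to verify the finitely many ways a deletion can alter a given edge. Once these local checks are assembled, the induction closes and $G$ is vertex decomposable.
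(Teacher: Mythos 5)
Your proposal is correct, and it takes a genuinely different route from the paper's proof, even though both rest on the same two pillars: Woodroofe's criterion that $N_G[w] \subseteq N_G[v]$ makes $v$ a shedding vertex, and an induction in which one checks that $G \setminus v$ and $G \setminus N_G[v]$ inherit the hypothesis. The paper inducts on $\# W$ and leans on two external results for its degenerate cases (chordal graphs are vertex decomposable for $\# W = 1$, and Cameron--Walker graphs are vertex decomposable when only types (i) and (ii) occur); its shedding vertices are the degree-$3$ endpoint $x_{jk}$ in case (iii) and, in case (iv), the vertex $y_{i_{j1}}$ (or $y_{i_{j2}}$) carrying the pendant triangle, so in the paper the pendant-triangle clause is precisely what produces the shedding vertex. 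You instead induct on $\# V$ down to the edgeless graph, handle (i) and (ii) directly, shed the degree-$2$ endpoint in case (iii), and in case (iv) shed the matching endpoint $x_{j1}$ using the exact witness $y_{i_{j3}}$. I checked your deferred bookkeeping and it does close: when $v \in M$ the set $N_G[v]$ meets $M$ only in $e_j$; exact witnesses of surviving edges are never deleted (their whole neighbourhood lies in their own edge); stranded endpoints of type (iii)/(iv) edges pair with their now-pendant exact witnesses, stranded leaves become isolated and move to $W'$; and the apex of a pendant triangle attached to a surviving type (iv) edge keeps degree at least $3$ because the triangle pair and the adjacent endpoint survive, while deletion of such an apex forces the adjacent endpoint to drop to degree $2$, degrading the edge to (iii). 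A noteworthy consequence of your choices: your argument never uses the pendant-triangle hypothesis of (iv) except to carry it through the induction, so run with that clause deleted from both the statement and the inductive hypothesis it proves a strictly stronger theorem --- for instance it shows that the $3n$-cycle with a triangle capping every third edge is vertex decomposable, a graph whose unique dominating-induced-matching decomposition violates the paper's condition (iv); the paper's proof cannot be adapted this way, since it has no shedding vertex without the triangle. One parenthetical slip: a type (iii) edge can never become an isolated edge under your deletions (its exact witness survives whenever both endpoints do), it can only degrade to (ii); nothing in your argument depends on this.
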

\begin{remark}
  \label{rmk:DIM-VD-pendant-trianlgle}
  The each condition (ii), (iii), (iv) of Theorem \ref{DIM-VD} 
  is concerned with the existence of a pendant triangle. 

  \par
  Indeed, the condition (ii) means that $G$ has a pendant triangle attached to 
  $y_{i_j}$; the condition (iii) means that $G$ has a pendant triangle 
  attached to $x_{jk}$; the condition (iv) is explicit. 
\end{remark}

In order to prove Theorem \ref{DIM-VD}, we use the following lemma. 
\begin{lemma}
  \label{pendant-triangle-shedding}
  Let $G$ be a finite simple graph on $V$. 
  Suppose that $G$ has a pendant triangle attached to $v \in V$. 
  Then $v$ is a shedding vertex. 
\end{lemma}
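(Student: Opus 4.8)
The plan is to invoke the criterion stated just before the lemma: for a vertex $v \in V$, if there exists $w \in V$ with $N[w] \subseteq N[v]$, then $v$ is a shedding vertex (\cite[Lemma 6]{Woodroofe-vertex-decomp}). Thus it suffices to exhibit such a vertex $w$, and the pendant triangle attached to $v$ supplies an obvious candidate.

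First I would unwind the definition of a pendant triangle attached to $v$: it is a triangle on vertices $v, a, b$ with $\deg_G(a) = \deg_G(b) = 2$ and $\deg_G(v) > 2$. Since $a$ lies on this triangle, it is adjacent to both $v$ and $b$; and because $\deg_G(a) = 2$, these are its only neighbours. Hence $N_G(a) = \{v, b\}$, so that $N_G[a] = \{a, v, b\}$.

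Next I would observe that, as $\{v,a\}$ and $\{v,b\}$ are edges of the triangle, both $a$ and $b$ belong to $N_G(v)$. Therefore $\{a, v, b\} \subseteq \{v\} \cup N_G(v) = N_G[v]$, which combined with the previous step gives the containment $N_G[a] = \{a, v, b\} \subseteq N_G[v]$. Taking $w = a$ in the criterion above then shows that $v$ is a shedding vertex, as required.

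There is no real obstacle in this argument; the only point deserving care is the inference, from $\deg_G(a) = 2$, that $a$ has no neighbour outside $\{v, b\}$, since this is precisely what forces $N_G[a] \subseteq N_G[v]$ rather than merely $N_G[a]$ meeting $N_G[v]$. I would state that inference explicitly and then conclude by citing the shedding-vertex criterion.
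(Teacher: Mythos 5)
Your proposal is correct and is exactly the paper's own argument: take one degree-$2$ vertex of the pendant triangle (your $a$, the paper's $v_1$), note that $N[a]=\{a,v,b\}\subseteq N[v]$, and invoke \cite[Lemma 6]{Woodroofe-vertex-decomp}. Nothing differs beyond notation and the level of detail spelled out.
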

\begin{proof}
  Let $v_1, v_2$ be the two degree $2$ vertices of a pendant triangle 
  attached to $v$. Then $N[v_1] = \{ v, v_1, v_2 \} \subset N[v]$ 
  holds. Hence $v$ is a shedding vertex; see before Theorem \ref{DIM-VD}.  
\end{proof}

Now we prove Theorem \ref{DIM-VD}. 
\begin{proof}[Proof of Theorem \ref{DIM-VD}]
  We use induction on $r = \# W$. 
  If $r=1$, then $G$ is chordal and thus $G$ is vertex decomposable 
  by Woodroofe \cite[Corollary 7]{Woodroofe-vertex-decomp}. 

  \par
  Suppose that $r \geq 2$. 
  If the cases (iii) and (iv) do not occur, 
  then $G$ is a Cameron--Walker graph and thus, $G$ is vertex decomposable 
  by \cite[Theorem 3.1]{HHKO}. 

  \par
  If there is an edge $\{ x_{j1}, x_{j2} \}$ with the condition (iii), 
  say, $\deg x_{j1} = 3$ and $\deg x_{j2} = 2$, 
  then $x_{j1}$ is a shedding vertex because of 
  Remark \ref{rmk:DIM-VD-pendant-trianlgle} 
  and Lemma \ref{pendant-triangle-shedding}. 
  Therefore we only need to prove that both $G \setminus x_{j1}$ and 
  $G \setminus N[x_{j1}]$ are vertex decomposable. 
  Indeed $G \setminus x_{j1}$ is the disjoint union of 
  single edge $\{ x_{j2}, y_{i_j} \}$ and 
  $G' := G \setminus \{ x_{j1}, x_{j2}, y_{i_j} \}$. 
  Since the vertex set of $G'$ can be decomposed as 
  $W' \sqcup M'$ where $W' = W \setminus \{ y_{i_j} \}$ 
  and $M' = M \setminus \{ x_{j1}, x_{j2} \}$, 
  $G'$ has a dominating induced matching. 
  Also $G'$ satisfies the assumption of the theorem 
  with this decomposition of the vertex set since 
  $N_G (y_{i_j}) = \{ x_{j1}, x_{j2} \}$. 
  Hence we conclude that $G'$, and thus $G \setminus x_{j1}$ 
  is vertex decomposable by inductive hypothesis. 
  Also the vertex set of $G \setminus N[x_{j1}]$ can be decomposed as 
  $W'' \sqcup M''$ where $W'' = W \setminus N(x_{j1})$ and 
  $M'' = M \setminus \{ x_{j1}, x_{j2} \}$. 
  Thus $G \setminus N[x_{j1}]$ has a dominating induced matching. 
  Since $\deg_G x_{j' k} \geq \deg_{G \setminus N[x_{j1}]} x_{j'k}$, 
  we can easily see that this decomposition satisfies the assumption of 
  the theorem. Hence by inductive hypothesis, we conclude that 
  $G \setminus N[x_{j1}]$ is also vertex decomposable. 

  \par
  Suppose that there is an edge $\{ x_{j1}, x_{j2} \}$ with the condition (iv). 
  We may assume that $G$ has a pendant triangle attached to $y_{i_{j1}}$. 
  Then $y_{i_{j1}}$ is a shedding vertex 
  by Lemma \ref{pendant-triangle-shedding}. 
  Hence it is enough to prove that both $G \setminus y_{i_{j1}}$ and 
  $G \setminus N [y_{i_{j1}}]$ are vertex decomposable. 
  We first consider $G \setminus y_{i_{j1}}$. 
  Since the vertex set of this graph can 
  be decomposed as $(W \setminus \{ y_{i_{j1}} \}) \sqcup M$, 
  this graph has a dominating induced matching. 
  We check that each $j' = 1, \ldots, m$ satisfies one of the conditions 
  (i), (ii), (iii), (iv) with respect to $G \setminus y_{i_{j1}}$. 
  If $j'$ satisfies the condition (i) (resp.\  (ii)) with respect to $G$, 
  then $j'$ satisfies the condition (i) (resp.\  (ii) or (i)) with respect to 
  $G \setminus y_{i_{j1}}$. 
  Assume that $j'$ satisfies the condition (iii) (resp.\  (iv)) 
  with respect to $G$. Since $\deg_G y_{i_{j1}} \geq 3$ and 
  $\deg_G y_{i_{j'}} = 2$ (resp.\  $\deg_G y_{i_{j' 3}} = 2$), 
  the vertex $y_{i_{j1}}$ is different from $y_{i_{j'}}$ 
  (resp.\  $y_{i_{j' 3}}$). 
  Hence $j'$ satisfies the condition (iii) or (ii) 
  (resp.\  (iv) or (iii)). 
  Therefore this decomposition satisfies the assumption of the theorem. 
  Hence by inductive hypothesis, we conclude that 
  $G \setminus y_{i_{j1}}$ is vertex decomposable. 
  We next consider $G \setminus N[y_{i_{j1}}]$. 
  In this case, the vertex set of $G \setminus N[y_{i_{j1}}]$ is decomposed as 
  $W' \sqcup M'$ where 
  \begin{displaymath}
    \begin{aligned}
      W' &= W \setminus \left( \{ y_{i_{j1}} \} \cup 
         \bigcup_{\genfrac{}{}{0pt}{}{\text{$j'$ satisfying (iii)}}{\{ x_{j' 1}, x_{j' 2} \} \cap N_G (y_{i_{j1}}) \neq \emptyset}} \{ y_{i_{j'}} \} 
         \cup 
         \bigcup_{\genfrac{}{}{0pt}{}{\text{$j'$ satisfying (iv)}}{ \# (\{ x_{j' 1}, x_{j' 2} \} \cap N_G (y_{i_{j1}})) = 1}} \{ y_{i_{j' 3}} \} \right), \\
      M' &= (M \setminus N_G (y_{i_{j1}})) \cup 
         \bigcup_{\genfrac{}{}{0pt}{}{\text{$j'$ satisfying (iii)}}{\{ x_{j' 1}, x_{j' 2} \} \cap N_G (y_{i_{j1}}) \neq \emptyset}} \{ y_{i_{j'}} \} 
         \cup 
         \bigcup_{\genfrac{}{}{0pt}{}{\text{$j'$ satisfying (iv)}}{ \# (\{ x_{j' 1}, x_{j' 2} \} \cap N_G (y_{i_{j1}})) = 1}} \{ y_{i_{j' 3}}\}. 
    \end{aligned}
  \end{displaymath}
  Then we can easily see that $G \setminus N_G [y_{i_{j1}}]$ has a dominating 
  induced matching. 
  For example, let $j'$ be an index satisfying (iii) and 
  $\{ x_{j' 1}, x_{j' 2} \} \cap N_G (y_{i_{j1}}) \neq \emptyset$, 
  say $x_{j' 1} \in N_G (y_{i_{j1}})$. Then $x_{j' 1} \notin M'$ but 
  $y_{i_{j'}} \in M$ and $\{ x_{j' 2}, y_{i_{j'}} \}$ is an edge of 
  $G \setminus N[y_{i_{j1}}]$. Note that in this case, 
  $\deg_{G \setminus N [y_{i_{j1}}]} y_{i_{j'}} = 1$. 
  Then it is also easy to see that the assumption of the theorem 
  is satisfied with this decomposition. 
  Therefore $G \setminus N[y_{i_{j1}}]$ is vertex decomposable 
  by inductive hypothesis. 
\end{proof}

\par
Although we provide the characterization for a graph with a 
dominating induced matching to be unmixed 
in Theorem \ref{claim:unmixedDIM}, 
we can obtain a clearer characterization for the unmixedness 
of the class of graphs in Theorem \ref{DIM-VD}. 
It is sufficient to consider a connected graph which is not a single edge. 

\begin{theorem}
  \label{DIM-VD-unmixed}
  Let $G$ be a finite simple connected graph on $V$ 
  with a dominating induced matching. 
  Assume that there exists a decomposition $V = W \sqcup M$ satisfying 
  the assumption of Theorem \ref{DIM-VD} (where $W \neq \emptyset$). 
  We use the same notation as in Theorem \ref{DIM-VD} 
  and before Theorem \ref{claim:unmixedDIM}. 
  Then $G$ is Cohen--Macaulay if and only if 
  $\# W = m_2$ and for all $y_i \in W$, there is just one edge 
  $\{ x_{j_i 1}, x_{j_i 2} \}$ of $G$ such that 
  both $\{ x_{j_i 1}, y_i \}$ and $\{ x_{j_i 2}, y_i \}$ are edges of $G$. 
\end{theorem}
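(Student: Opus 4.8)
The plan is to reduce the Cohen--Macaulay property to unmixedness and then read off the stated combinatorial conditions. By Theorem \ref{DIM-VD} the hypotheses guarantee that $G$ is vertex decomposable, hence sequentially Cohen--Macaulay. A sequentially Cohen--Macaulay simplicial complex is Cohen--Macaulay if and only if it is pure, and purity of the independence complex $\mathrm{Ind}(G)$ is exactly the statement that all maximal independent sets of $G$ have the same cardinality, i.e.\ that $G$ is unmixed. Thus, under our hypotheses, $G$ is Cohen--Macaulay if and only if $G$ is unmixed, and it suffices to prove that $G$ is unmixed precisely when $\#W = m_2$ together with the pendant-triangle uniqueness condition holds.

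For the bookkeeping I would fix the reference minimal vertex cover $C_0$ of (\ref{eq:MinimalVertexCover}), of cardinality $m_1 + 2m_2$; its complement is a maximal independent set of size $m_1 + r$ with $r = \#W$. Unmixedness is then equivalent to every minimal vertex cover having cardinality $m_1 + 2m_2$, equivalently to every maximal independent set having size $m_1 + r$. A preliminary observation, valid throughout the class of Theorem \ref{DIM-VD}, is that each edge $\{x_{j1},x_{j2}\}$ with $\deg_G x_{j1},\deg_G x_{j2}\ge 2$ (that is, each of the $m_2$ edges of types (ii), (iii), (iv)) has a \emph{unique} common neighbour in $W$, which I call its apex; this is immediate from the degree restrictions in (ii)--(iv). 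Assigning to each $y_i$ the edges of which it is the apex therefore sets up the relevant correspondence between $W$ and the $m_2$ edges.

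For the direction ``unmixed $\Rightarrow$ conditions'' I would argue by contraposition, producing two maximal independent sets (equivalently two minimal vertex covers) of different cardinalities whenever the pendant condition fails. If some $y_i$ is the apex of two distinct edges, then replacing $y_i$ in the complement of $C_0$ by one base vertex from each of the two triangles yields a strictly larger independent set, which I extend greedily to a maximal one; if instead some $y_i$ (necessarily non-isolated, by connectedness) is the apex of no edge, then $y_i$ is adjacent only to single endpoints of matching edges, and choosing such an endpoint in place of $y_i$ and extending produces a maximal independent set of cardinality $< m_1 + r$. Either way $G$ is not unmixed. Hence unmixedness forces each $y_i$ to be the apex of exactly one $m_2$ edge; the uniqueness of the apex then makes $y_i \mapsto (\text{its edge})$ a bijection between $W$ and the $m_2$ edges, giving $\#W = m_2$ as well.

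For the converse I would assume $\#W = m_2$ and the pendant bijection and verify condition $(\flat)$ of Theorem \ref{claim:unmixedDIM} for every $M_2 \subseteq M$ obeying $(\ast 1)$ and $(\ast 2)$. Condition $(\flat 2)$ is then automatic: substituting $(\flat 1)$ into its right-hand side turns it into $\#W_0 \le m_2 + m_2' + \#IN_G(M_2,W)$, which holds because $W_0 \subseteq W$ forces $\#W_0 \le \#W = m_2$ while $m_2'$ and $\#IN_G(M_2,W)$ are non-negative. The whole weight therefore falls on $(\flat 1)$, namely $m_2 - m_2' = \#N_G(M_2) - \#M_2$. I expect this to be the main obstacle: one must show, using the apex bijection and the explicit local shapes (ii)--(iv), that deleting $N_G[M_2]$ destroys exactly the right number of $m_2$ edges, with no over- or under-counting arising from the degree-$\ge 3$ vertices of types (iii) and (iv) that meet several triangles at once. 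Once $(\flat 1)$ is established, Theorem \ref{claim:unmixedDIM} yields unmixedness, hence Cohen--Macaulayness, completing the equivalence.
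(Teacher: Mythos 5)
Your opening reduction is correct and matches the paper: by Theorem \ref{DIM-VD} the graph is vertex decomposable, hence sequentially Cohen--Macaulay, so Cohen--Macaulayness is equivalent to unmixedness; your ``apex'' observation (each of the $m_2$ edges of types (ii)--(iv) has a \emph{unique} common neighbour in $W$, because the apex of a type (iii) or (iv) edge has degree $2$) is also correct, and it does make your two cases A and B exhaustive for the contrapositive. Case A is sound. The gap is in Case B: if $y_i$ is the apex of no edge, the set $(X_0 \setminus \{y_i\}) \cup \{x_{j1}\}$ obtained by swapping $y_i$ for a neighbour $x_{j1}$ in the complement $X_0$ of $C_0$ need \emph{not} be independent. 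The vertex $x_{j1}$ can be adjacent to other members of $X_0$: to further vertices of $W$ (one more when $\deg_G x_{j1}=3$, arbitrarily many when $\{x_{j1},x_{j2}\}$ is of type (i)), and to the leaf $x_{j2}$ itself when the edge is of type (i). After deleting all of these, the extension to a maximal independent set is forced to add vertices back, namely one base vertex of every type (ii) pendant triangle whose apex was deleted, so the final cardinality is $m_1+r+1-\#\bigl(N_G(x_{j1})\cap W\bigr)-\epsilon+t$ for suitable $\epsilon, t$, and proving this is $<m_1+r$ requires precisely the degree analysis of types (ii)--(iv) that you omit. It does come out right, but that analysis \emph{is} the proof; as written, the claim ``extending produces a maximal independent set of cardinality $<m_1+r$'' is unsupported. (The paper sidesteps this entirely: it applies $(\flat 1)$ of Theorem \ref{claim:unmixedDIM} to an $M_2$ containing one vertex from each matching edge, shows $W \subset N_G(M_2)$, hence $N_G(M_2)=V\setminus M_2$, and reads off $\#W=m_2$; uniqueness then follows from a second application of $(\flat 1)$ to the bases of the $\alpha$ pendant triangles at a single $y_i$, which yields $\alpha=1$.)

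The more serious gap is in the converse. You reduce ``conditions $\Rightarrow$ unmixed'' to verifying $(\flat 1)$ for every $M_2$ satisfying $(\ast 1)$ and $(\ast 2)$ --- your treatment of $(\flat 2)$ given $(\flat 1)$ is fine --- and then you explicitly leave $(\flat 1)$ unproven, calling it ``the main obstacle.'' Since that is the entire content of this direction in your plan, the proof is incomplete exactly where the work lies; controlling $m_2-m_2'$ against $\#N_G(M_2)-\#M_2$ for arbitrary admissible $M_2$ is a genuinely delicate count (it is, in effect, the hard half of Theorem \ref{claim:unmixedDIM} all over again). The paper avoids Theorem \ref{claim:unmixedDIM} here altogether and argues directly: under the hypotheses, $V$ partitions into the $m_1$ type (i) edges and the $r=m_2$ vertex-disjoint triangles $\{y_i, x_{j_i1}, x_{j_i2}\}$; any maximal independent set $X$ meets each type (i) edge in exactly one vertex (at most one by adjacency, at least one since the leaf's only neighbour is its partner), and meets each triangle in exactly one vertex (at most one since its three vertices are pairwise adjacent, at least one by maximality via a short case analysis on $\deg_G y_i$). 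Hence $\#X = m_1+m_2=m$ for every maximal independent set, which is unmixedness. I would replace your $(\flat 1)$ plan by this counting argument; with that, and with Case B repaired as above, your overall strategy goes through.
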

\begin{proof}
  We first note that $G$ is Cohen--Macaulay if and only if $G$ is unmixed 
  because $G$ is vertex decomposable by Theorem \ref{DIM-VD}. 

\par
  {\bf (``Only If'')} 
  Let $M_2$ be the union of the following subsets $V_1, \ldots, V_4$ of $M$: 
  $V_1$ is the set of the vertices $x_{jk} \in V$ 
  where $\{ x_{j1}, x_{j2} \}$ is 
  an edge of type (i) of Theorem \ref{DIM-VD} 
  with $\deg x_{jk} \geq \deg x_{jl} = 1$ ($\{ k,l \} = \{ 1,2 \}$); 
  $V_2$ is the set of the vertices $x_{j1}$ where $\{ x_{j1}, x_{j2} \}$ is 
  an edge of type (ii) of Theorem \ref{DIM-VD}; 
  $V_3$ is the set of the vertices $x_{jk}$ where $\{ x_{j1}, x_{j2} \}$ is 
  an edge of type (iii) of Theorem \ref{DIM-VD} 
  with $\deg x_{jk} = 3$ and $\deg x_{jl} = 2$ ($\{ k,l \} = \{ 1,2 \}$); 
  $V_4$ is the set of the vertices $x_{jk}$ where $\{ x_{j1}, x_{j2} \}$ is 
  an edge of type (iv) of Theorem \ref{DIM-VD} 
  and the numbers of pendant triangles attached to each 
  $y_{i_j k}$ is less than or equal to that of to $y_{i_j l}$
  with the notation in Theorem \ref{DIM-VD} (iv) ($\{ k,l \} = \{ 1,2 \}$). 
  Clearly, $M_2$ satisfies the condition $(\ast 1)$. 
  The condition $(\ast 2)$ is also satisfied because $G$ is connected. 
  Note that $\# M_2 = m$, in particular $m_2' = 0$. 
  Since $G$ is unmixed, by $(\flat 1)$ of Theorem \ref{claim:unmixedDIM}, 
  it follows that 
  \begin{displaymath}
    m_2 = \# N_G (M_2) - \# M_2. 
  \end{displaymath}
  Also $W \subset N_G (M_2)$ holds. 
  Actually, take $y_i \in W$. 
  Since $G$ is connected, there exists an edge $\{ x_{j1}, x_{j2} \}$ 
  such that $\{ x_{jk}, y_i \}$ is an edge of $G$. 
  If $\{ x_{j1}, x_{j2} \}$ is of type (i) or (ii) of Theorem \ref{DIM-VD}, 
  then it is easy to see that $y_i \in N_G (M_2)$. 
  If $\{ x_{j1}, x_{j2} \}$ is of type (iii) 
  of Theorem \ref{DIM-VD} 
  and $x_{jk} \notin M_2$, then $x_{j \ell} \in M_2$ and $\deg_G x_{jk} = 2$. 
  It then follows that $y_i \in N_G (x_{j \ell}) \subset N_G (M_2)$. 
  If $\{ x_{j1}, x_{j2} \}$ is of type (iv) 
  of Theorem \ref{DIM-VD} 
  and $x_{jk} \notin M_2$, then there is a pendant triangle attached to 
  $y_i$. Let $x_{j' 1}, x_{j' 2}$ be the two vertices of the pendant triangle 
  of degree $2$. 
  Since $\{ x_{j' 1}, x_{j' 2} \}$ is of type (ii) of Theorem \ref{DIM-VD}, 
  it follows that $y_{i} \in N_G (M_2)$. 

  \par
  The inclusion $W \subset N_G (M_2)$ implies that 
  $N_G (M_2) = V(G) \setminus M_2$. Therefore 
  \begin{displaymath}
    \begin{aligned}
      m_2 &= \#N_G (M_2) - \# M_2 \\
          &= (\# V(G)- \# M_2) - \# M_2 \\
          &= \# V(G) - 2 \# M_2 \\
          &= (2m + \# W) - 2 m \\
          &= \# W. 
    \end{aligned}
  \end{displaymath}

  \par
  Suppose that there exists $y_i \in W$ such that 
  $\{ x_{j 1}, y_i \}, \{ x_{j 2}, y_i \}, 
   \{ x_{j' 1}, y_i \}, \{ x_{j' 2}, y_i \} \in E(G)$ for $j \neq j'$. 
  It then follows that 
  both $\{ x_{j1}, x_{j2} \}$ and $\{ x_{j' 1}, x_{j' 2} \}$ 
  are of type (ii) of Theorem \ref{DIM-VD}. 
  In particular, both $\{ y_i, x_{j1}, x_{j2} \}$ and 
  $\{ y_i, x_{j' 1}, x_{j' 2} \}$ form pendant triangles attached to $y_i$. 
  Assume that there are $\alpha \geq 2$ pendant triangles attached to $y_i$; 
  set the two degree $2$ vertices of each pendant triangle as 
  $\{ x_{j_k 1}, x_{j_k 2} \}$, $k=1, 2, \ldots, \alpha$. 
  Put $M_2'' = \{ x_{j_1 1}, x_{j_2 1}, \ldots, x_{j_\alpha 1} \}$. 
  Then $M_2''$ satisfies the condition $(\ast 1)$ and $(\ast 2)$. 
  Note that 
  $N_G (M_2'') = \{ y_i, x_{j_1 2}, x_{j_2 2}, \ldots, x_{j_\alpha 2} \}$ 
  and $m_2 - m_2'' = \alpha$. 
  Therefore by $(\flat 1)$ of Theorem \ref{claim:unmixedDIM}, we have 
  \begin{displaymath}
    \alpha = m_2 - m_2'' = \# N_G (M_2'') - \# M_2'' 
      = (\alpha + 1) - \alpha = 1, 
  \end{displaymath}
  this contradict to $\alpha \geq 2$. 
  Since $m_2 = \# W$, the assertion follows. 

  \par
  {\bf (``If'')} 
  Let $X$ be a maximal independent set of $G$.
  In order to prove that $G$ is unmixed, 
  it is sufficient to show that $\# X = m$. 
  Set 
  \begin{displaymath}
    \begin{aligned}
      E_1 &:= \{ \{ x_{j1}, x_{j2} \} \in E(G) \; : \; 
                 \text{$\deg_G x_{j1} = 1$ or $\deg_G x_{j2} = 1$} \}, \\
      E_2 &:= \{ \{ x_{j1}, x_{j2} \} \in E(G) \; : \; 
                 \text{$\deg_G x_{j1} \geq 2$ and $\deg_G x_{j2} \geq 2$} \} 
    \end{aligned}
  \end{displaymath}
  and $V_k := \bigcup_{e \in E_k} e$ for $k=1,2$. 
  Since $V = V_1 \sqcup V_2 \sqcup W$, 
  \begin{displaymath}
    \# X = \# (X \cap V_1) + \# (X \cap (V_2 \cup W)). 
  \end{displaymath}

  \par
  Take $\{ x_{j1}, x_{j2} \} \in E_1$. 
  Assume that $\deg_G x_{j1} = 1$. 
  Then $\deg_G x_{j2} \geq 2$ because $G$ is connected. 
  If $x_{j2} \notin X$, then $x_{j1} \in X$ because the maximality of $X$. 
  This implies $\# (X \cap \{ x_{j1}, x_{j2} \}) = 1$ and 
  $\# (X \cap V_1) = m_1$. 

  \par
  By assumption, $V_2 \sqcup W$ can be decomposed as 
  $\bigcup_{i=1}^r \{ y_i, x_{j_i 1}, x_{j_i 2} \}$. 
  We claim that 
  $X \cap \{ y_i, x_{j_i 1}, x_{j_i 2} \} = 1$ for each $i$. 

  \par
  Since $\{ y_i, x_{j_i 1} \}, \{ y_i, x_{j_i 2} \}, 
  \{ x_{j_i 1}, x_{j_i 2} \} \in E(G)$, 
  it follows that 
  $\# (X \cap \{ y_i, x_{j_i 1}, x_{j_i 2} \}) \leq 1$. 
  Assume that $y_i \notin X$. 
  If $\deg y_i \geq 3$, then $\deg_G x_{j_i 1} = \deg_G x_{j_i 2} = 2$  
  and it follows from the maximality of $X$ 
  that exactly one of $x_{j_i 1}, x_{j_i 2}$ 
  belongs to $X$. 
  When $\deg y_i = 2$, if neither $x_{j_i 1}$ nor $x_{j_i 2}$ belong to 
  $X$, then $X \cup \{ y_i \}$ is also an independent set. 
  This contradicts to the maximality of $X$. 
  Thus (exactly) one of $x_{j_i 1}, x_{j_i 2}$ belongs to $X$. 
  Hence $\# (X \cap (V_2 \cup W)) = m_2 = \# W$. 

  \par
  Therefore 
  \begin{displaymath}
    \# X = \# (X \cap V_1) + \# (X \cap (V_2 \cup W)) 
         = m_1 + m_2 = m, 
  \end{displaymath}
  as desired. 
\end{proof}

\par
We show an example satisfying the assumption of Theorem \ref{DIM-VD}. 
\begin{example}
The graph $G$ in Figure \ref{fig:CMDIM} has a dominating induced matching, 
which is not a Cameron--Walker graph. 
It also satisfies the assumption of Theorem \ref{DIM-VD} 
with the displayed decomposition of the vertex set. 

\par
Then $m_2 = \# W = 3$. 
We can also easily see that this graph satisfies the assumption 
for the vertex in $W$ of Theorem \ref{DIM-VD-unmixed}. 
Hence $G$ is Cohen--Macaulay by Theorem \ref{DIM-VD-unmixed}. 
\end{example}

\begin{figure}[htbp]
  \begin{center}
    \includegraphics[width=120mm]{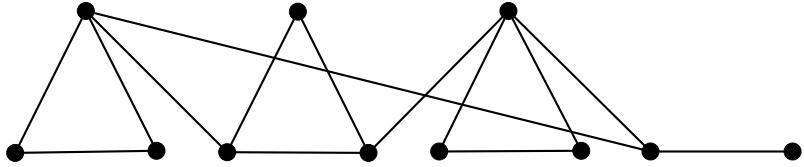}
  \end{center}
  \caption{A Cohen--Macaulay graph with a dominating induced matching}
  \label{fig:CMDIM}
\end{figure}

\par
We close the paper by giving some more examples of a graph with 
a dominating induced matching 
which does not satisfy the assumption on Theorem \ref{DIM-VD}. 

\par
We first show some Cohen--Macaulay graphs with a dominating induced matching. 
\begin{example}
  \begin{enumerate}
  \item The path graph $P_4$ with $4$ vertices 
    is a Cohen--Macaulay graph. Also it has a dominating induced matching. 
    Indeed, set $V(P_4) = \{ 1,2,3,4 \}$ and 
    $E(P_4) = \{ \{ 1,2 \}, \{ 2,3 \}, \{ 3,4 \} \}$. 
    Then we see that $P_4$ has a dominating induced matching with 
    the decomposition $V(P_4) = W \sqcup M$ where 
    $W = \{ 1,4 \}$ and $M = \{ 2,3 \}$. 
  \item The  graph $G_1$ on the vertex set $\{ 1,2,3,4,5,6 \}$ 
    whose edge set is 
    \begin{displaymath}
      E(G_1) 
        = \{ \{ 1,3 \}, \{ 1,4 \}, \{ 1,5 \}, \{ 2,4 \}, \{ 2,5 \}, \{ 2,6 \}, 
             \{ 3,4 \}, \{ 5,6 \} \}
    \end{displaymath}
    is a graph with a dominating induced matching. 
    This is Cohen--Macaulay, in particular, unmixed. 
    \newline
    \begin{picture}(100,60)
      \put(0,45){$G_1$:}
      \put(20,20){\circle*{5}}
      \put(40,20){\circle*{5}}
      \put(60,20){\circle*{5}}
      \put(80,20){\circle*{5}}
      \put(40,40){\circle*{5}}
      \put(60,40){\circle*{5}}
      \put(15,10){$3$}
      \put(40,10){$4$}
      \put(60,10){$5$}
      \put(80,10){$6$}
      \put(35,45){$1$}
      \put(60,45){$2$}
      \put(40,40){\line(-1,-1){20}}
      \put(40,40){\line(0,-1){20}}
      \put(40,40){\line(1,-1){20}}
      \put(60,40){\line(-1,-1){20}}
      \put(60,40){\line(0,-1){20}}
      \put(60,40){\line(1,-1){20}}
      \put(40,20){\line(-1,0){20}}
      \put(60,20){\line(1,0){20}}
    \end{picture}
  \end{enumerate}
\end{example}

We next show an unmixed graph with a dominating induced matching 
but not Cohen--Macaulay. 
\begin{example}
  The graph $G_2$ on the vertex set $\{ 1,2,3,4,5,6 \}$ whose edge set is 
  \begin{displaymath}
    \{ \{ 1,3 \}, \{ 1,4 \}, \{ 1,5 \}, \{ 1,6 \}, \{ 2,3 \}, \{ 2,4 \}, 
       \{ 2,5 \}, \{ 2,6 \}, \{ 3,4 \}, \{ 5,6 \} \}
  \end{displaymath}
  is a graph with a dominating induced 
  matching. 
  This is unmixed but not Cohen--Macaulay. 
  \newline
  \begin{picture}(100,60)
    \put(0,45){$G_2$:}
    \put(20,20){\circle*{5}}
    \put(40,20){\circle*{5}}
    \put(60,20){\circle*{5}}
    \put(80,20){\circle*{5}}
    \put(40,40){\circle*{5}}
    \put(60,40){\circle*{5}}
    \put(15,10){$3$}
    \put(40,10){$4$}
    \put(60,10){$5$}
    \put(80,10){$6$}
    \put(35,45){$1$}
    \put(60,45){$2$}
    \put(40,40){\line(-1,-1){20}}
    \put(40,40){\line(0,-1){20}}
    \put(40,40){\line(1,-1){20}}
    \put(60,40){\line(-1,-1){20}}
    \put(60,40){\line(0,-1){20}}
    \put(60,40){\line(1,-1){20}}
    \put(40,20){\line(-1,0){20}}
    \put(60,20){\line(1,0){20}}
    \put(40,40){\line(2,-1){40}}
    \put(60,40){\line(-2,-1){40}}
  \end{picture}
\end{example}

Finally, we show a sequentially Cohen--Macaulay graph with a 
dominating induced matching but not unmixed. 
\begin{example}
  The graph $G_3$ on the vertex set $\{ 1,2,3,4,5,6 \}$ whose edge set is 
  \begin{displaymath}
    \{ \{ 1,3 \}, \{ 3,4 \}, \{ 2,4 \}, \{ 2,5 \}, \{ 2,6 \}, \{ 5,6 \} \}
  \end{displaymath}
  is a graph with a dominating induced matching. 
  \newline
  \begin{picture}(100,60)
    \put(0,45){$G_3$:}
    \put(20,20){\circle*{5}}
    \put(40,20){\circle*{5}}
    \put(60,20){\circle*{5}}
    \put(80,20){\circle*{5}}
    \put(40,40){\circle*{5}}
    \put(60,40){\circle*{5}}
    \put(15,10){$3$}
    \put(40,10){$4$}
    \put(60,10){$5$}
    \put(80,10){$6$}
    \put(35,45){$1$}
    \put(60,45){$2$}
    \put(40,40){\line(-1,-1){20}}
    \put(60,40){\line(-1,-1){20}}
    \put(60,40){\line(0,-1){20}}
    \put(60,40){\line(1,-1){20}}
    \put(40,20){\line(-1,0){20}}
    \put(60,20){\line(1,0){20}}
  \end{picture}
  \newline
  This is not unmixed since 
  both $\{ 3,4,5,6 \}$ and $\{ 2,3,6 \}$ 
  are minimal vertex covers of $G_3$. 
  Also, since $G_3$ is chordal, it is sequentially Cohen--Macaulay 
  by Francisco and Van Tuyl \cite{FVT}. 
\end{example}

\begin{acknowledgement}
  The third author is partially supported by JSPS Grant-in-Aid 
  for Young Scientists (B) 24740008. 

  \par
  We thank anonymous referees for reading the manuscript carefully. 
\end{acknowledgement}


\end{document}